\let\oldmarginpar\marginpar
\renewcommand\marginpar[1]{\oldmarginpar[\raggedleft\footnotesize #1]%
{\raggedright\footnotesize #1}}
\renewcommand{\setminus}{{\smallsetminus}}
\newcommand{\Z}{{\mathbb{Z}}}
\newcommand{\NN}{{\mathbb{N}}}
\newcommand{\cut}{{\backslash \backslash}}
\newcommand{\vol}{{\rm vol}}
\theoremstyle{plain}
\newtheorem{theorem}{Theorem}[section]
\newtheorem{corollary}[theorem]{Corollary}
\newtheorem{lemma}[theorem]{Lemma}
\newtheorem{proposition}[theorem]{Proposition}
\newtheorem{conjecture}[theorem]{Conjecture}
\newtheorem*{namedtheorem}{\theoremname}
\newcommand{\theoremname}{testing}
\theoremstyle{definition}
\newtheorem{definition}[theorem]{Definition}
\newtheorem{remark}[theorem]{Remark}
\title[Cosets of monodromies and quantum representations]{Cosets of monodromies and quantum representations} 
\thanks{Kalfagianni's research was partially supported  by NSF grants DMS-1708249 and DMS-2004155 and a grant from  the Institute for Advanced Study School of Mathematics.}
\thanks{Detcherry's research was  supported  by a postdoctoral fellowship from the Max Planck Institute for Mathematics}
\author{Renaud Detcherry}
\address{Institut de Math\'ematiques de Bourgogne \\
         Faculté des Sciences Mirande, 9 avenue Alain Savary, BP 47870, 21078 Dijon Cedex, France         
         }
\email{renaud.detcherry@u-bourgogne.fr}
\author{Efstratia Kalfagianni}
\address{Department of Mathematics, Michigan State University, East
Lansing, MI, 48824, USA}
\email{kalfagia@math.msu.edu}
\begin{document}

\date{\today}

\begin{abstract} We use geometric methods to show that  given any $3$-manifold $M$, and $g$ a sufficiently large integer,  the mapping class group $\mathrm{Mod}(\Sigma_{g,1})$ contains a coset of an abelian subgroup of rank $\lfloor \frac{g}{2}\rfloor,$ consisting of pseudo-Anosov monodromies of open-book decompositions in $M.$ We prove a similar result for rank two free cosets of $\mathrm{Mod}(\Sigma_{g,1}).$
These results have applications to a conjecture of Andersen, Masbaum and Ueno about quantum representations of surface mapping class groups.
For surfaces with boundary, and large enough genus,  we construct cosets of abelian and free subgroups of their mapping class groups consisting of elements that satisfy the conjecture. 
The mapping tori of these elements are fibered 3-manifolds that satisfy 
a weak form of the Turaev-Viro invariants  volume conjecture.
 \end{abstract}


\maketitle

\section{Introduction}
\label{sec:intro}

It has been known since Alexander \cite{Alexander:open_books} that any closed, oriented $3$-manifold admits an open book decomposition, and
Myers \cite{Myers:open_books} proved that there is one with connected binding. In recent years open book decompositions received attention as they are closely related to contact geometry
through the work of Giroux \cite{Giroux}.

For a compact oriented surface $\Sigma:=\Sigma_{g,n}$ with genus $g$ and $n$ boundary components, let $\mathrm{Mod}(\Sigma)$ denote the  mapping class group of $\Sigma$.
For $f \in \mathrm{Mod}(\Sigma)$ and a subgroup $H$ of  $\mathrm{Mod}(\Sigma)$,
 we will say that $fH$ is a  \textit{rank $k$ abelian coset} if $H$ is abelian of rank $k$. Similarly, we will say that  $fH$ is a  \textit{free coset} if $H$ is free and non-abelian. Moreover in the whole text, we use the term "free group" for free non-abelian group.

Given a closed, oriented  3-manifold $M$, it makes sense to ask how large is the set of mapping classes in $\mathrm{Mod}(\Sigma_{g,1})$
that occur as monodromies
of open book decompositions of $M$. 
One way to quantify this size would be by the rank of abelian subgroups whose cosets occur as monodromies in $M$.
The main result of this article, Theorem \ref{thm:coset} below,
 gives a lower bound on the size. It says that for $g>g_1$, there is at least an abelian  coset of rank $\simeq g/2,$ for any given $M$.
 At this writing we do not know  of an upper-bound of this size other than  the maximal rank of any abelian subgroup of $\mathrm{Mod}(\Sigma_{g,1})$ which is known to be  $3g-2.$ \cite{Tits}

\begin{theorem}\label{thm:coset} Let $M$ be a closed, orientable 3-manifold.
There is an integer $g_1=g_1(M)>0,$ such that for any genus $g\geqslant g_1,$ there is a rank $\lfloor \frac{g}{2}\rfloor$ 
abelian coset of $\mathrm{Mod}(\Sigma_{g,1})$ consisting of pseudo-Anosov mapping classes all of which  occur as monodromies of open book decompositions in $M.$

 Moreover, for any $g\geqslant g_1,$ there is a free non-abelian coset of  $\mathrm{Mod}(\Sigma_{g+4,1})$,  that contains infinitely many pseudo-Anosov mapping classes, consisting  of monodromies of fibered  knots in $M.$
\end{theorem}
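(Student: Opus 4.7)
The plan is to construct the required monodromies from a single base open book of $M$ by classical topological operations---Murasugi sum/Hopf plumbing and Stallings twist---both of which preserve the ambient $3$-manifold. The pseudo-Anosov property will be verified by hyperbolic-geometric arguments.

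First I would invoke Myers' theorem to fix an open book decomposition $(\Sigma_{g_0,1},\phi_0)$ of $M$ with connected binding, for some $g_0$. Since $M = M \# S^3$ and Murasugi sum of open books realizes the connect sum of the underlying $3$-manifolds, plumbing $(\Sigma_{g_0,1},\phi_0)$ with a carefully chosen fibered knot in $S^3$ yields, for each sufficiently large $g$, a new open book $(\Sigma_{g,1},\phi)$ of $M$ whose binding is a fibered knot. I would arrange the $S^3$-piece so that its fiber contains $k = \lfloor g/2\rfloor$ disjoint, pairwise non-isotopic simple closed curves $\gamma_1,\dots,\gamma_k$ which are Stallings twist curves, i.e.\ unknots in $S^3$ with page-framing $0$. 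Such families exist for suitable plumbings of Hopf bands or more general fibered links.

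By the Stallings twist construction, for any $(n_1,\dots,n_k)\in\Z^k$ the class
\[
\phi \circ T_{\gamma_1}^{n_1} \circ \cdots \circ T_{\gamma_k}^{n_k}
\]
is again the monodromy of a fibered knot in $M$: twisting along $\gamma_i$ corresponds to $\pm 1/n_i$ Dehn surgery on an unknot in the $S^3$-summand, which does not change $S^3$. Since the $\gamma_i$ are disjoint and non-isotopic, $H := \langle T_{\gamma_1},\dots,T_{\gamma_k}\rangle$ is free abelian of rank $k$, and $\phi H$ is the desired abelian coset of monodromies in $\mathrm{Mod}(\Sigma_{g,1})$.

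The hard part is to ensure that every element of $\phi H$ is pseudo-Anosov---equivalently, that the corresponding mapping torus is hyperbolic. I would use two geometric inputs: a Penner-type criterion, forcing compositions of positive and negative Dehn twists along a filling set of curves to be pA; and Thurston's hyperbolic Dehn filling theorem, viewing large twist powers as Dehn fillings with large slopes on the cusped mapping torus of a base pA map, which preserve hyperbolicity outside finitely many exceptional slopes. Ruling out reducible or finite-order exceptions uniformly over the entire infinite coset is the main technical obstacle, and is where the ``geometric methods'' mentioned in the abstract will do the real work. Finally, for the free non-abelian coset in $\mathrm{Mod}(\Sigma_{g+4,1})$, I would use the extra $4$ units of genus to accommodate two pseudo-Anosov classes $f_1,f_2$ supported in disjoint genus-$2$ subsurfaces of the page, chosen so that each $\phi f_i$ is again the monodromy of a fibered knot in $M$ (by the same Murasugi sum argument). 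A standard ping-pong argument on projective measured laminations then shows that $\langle f_1,f_2\rangle$ is free of rank $2$, yielding a free coset $\phi\langle f_1, f_2\rangle$ of monodromies of fibered knots in $M$; infinitely many of its elements, for instance $\phi(f_1 f_2)^n$ with $n \neq 0$, are pseudo-Anosov.
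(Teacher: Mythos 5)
Your plan reproduces the right starting point (Myers' theorem, open book decompositions, Stallings twists), and the Murasugi-sum idea for manufacturing Stallings curves on the page is a plausible alternative to the paper's Hopf-plumbing stabilization. However, there are two concrete problems.

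\textbf{The pseudo-Anosov verification is explicitly punted, and it is the heart of the theorem.} You acknowledge the difficulty (``Ruling out reducible or finite-order exceptions uniformly over the entire infinite coset is the main technical obstacle'') without offering a mechanism. Neither of the tools you name closes the gap as stated. A Penner-type criterion needs a filling system of positive and negative twist curves; here the twists are along a few disjoint Stallings curves, which do not fill, and the base map $\phi$ is fixed rather than being a twist product. Thurston's Dehn filling theorem is indeed what the paper uses (via a refinement of Long--Morton, Theorem~\ref{hyperbolic}), but it requires first establishing that the link complement $M_\phi \setminus (\gamma_1 \cup \cdots \cup \gamma_k)$ is hyperbolic, and for $k>1$ this in turn needs the intersection condition $i(\phi(\gamma_i),\gamma_j)\neq 0$ for all $i,j$. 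Nothing in your construction guarantees this; in fact, Stallings curves coming from a Murasugi-summed $S^3$-piece can easily have $\phi$-images disjoint from one another. The paper engineers this condition deliberately, by choosing a Penner system of curves that are \emph{far from their $h$-images in the curve complex} (Lemma~\ref{Pennerlarge}), pushing them to stabilization arcs, tracking the curve-complex distance through a Colin--Honda-style stabilization argument (Theorem~\ref{stabilize}), and then invoking Lemma~\ref{fill}. This curve-complex bookkeeping is what makes the infinite coset uniformly pseudo-Anosov, and it has no analogue in your outline.

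\textbf{The free-coset argument fails outright.} Two mapping classes supported on \emph{disjoint} subsurfaces commute, so $\langle f_1, f_2\rangle$ is abelian (indeed cyclic by cyclic), and no ping-pong on laminations can make it free of rank two. The paper instead takes two Stallings curves $a,b$ that \emph{intersect four times} and applies Hamidi-Tehrani's theorem (Theorem~\ref{Hamidi-Tehrani}) to get a genuine rank-two free group $\langle \tau_a, \tau_b\rangle$. Separately, you assert only that $\phi f_1$ and $\phi f_2$ are monodromies in $M$, but the theorem requires this for \emph{every} word in the coset. The paper handles this with a leveling argument: for a word $\tau_a^{m_1}\tau_b^{n_1}\cdots\tau_a^{m_k}\tau_b^{n_k}$, it places $k$ copies of $a\cup b$ on distinct level fibers of the mapping torus so that all the corresponding Stallings disks are pairwise disjoint; only then is the full coset realized by fibered knots in $M$. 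This step is missing from your proposal.
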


It has been long known that any closed, oriented 3-manifold admits open book decompositions  with connected binding and pseudo-Anosov  monodromy.
This is the starting point of our proof of Theorem  \ref{thm:coset}. First,
inspired by a construction of Colin and Honda \cite{ColinHonda}, we show that any open book decomposition with pseudo-Anosov monodromy can be stabilized to one with pseudo-Anosov monodromy and so that the pages of the decomposition  support  several  \textit{Stallings twists} (see Theorem
\ref{stabilize}). To find these stabilizations we use techniques from the study of the geometry of {\textit{curve complexes}} of surfaces.
Other key ingredients of our proof are a refined version of a result of Long-Morton and Fathi  (see Theorem \ref{hyperbolic}) on composing pseudo-Anosov mapping classes with products of powers of Dehn twists, and a result of Hamidi-Tehrani  \cite{HamTeh} on generating free subgroups of mapping class groups.

Given an abelian coset $fH_k$  in $\mathrm{Mod}(\Sigma_{g,1})$ one  can ask  in which 3-manifolds can the elements of $fH_k$ occur as  monodromies of open book decompositions of $M$. In 
this direction we have the following contribution.

 \begin{corollary} \label{boundedintro} Given $k\geqslant 1$, there is a constant $C=C(k)$ with the following property:
 For any  $g\geqslant  7+2k$,  we have a rank $k$ abelian coset 
 $fH_k$   in  $\mathrm{Mod}(\Sigma_{g,1}),$  consisting  of mapping classes that cannot occur as monodromies of open book decompositions in any 3-manifold with Gromov norm larger than $C$.
   \end{corollary}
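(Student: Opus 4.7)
The plan is to build a coset whose associated open-book 3-manifolds all arise as Dehn fillings of one fixed 3-manifold $N$ whose Gromov norm depends only on $k$; the bound will then follow by monotonicity of the Gromov norm under Dehn filling together with its (sub)additivity under gluing along tori.

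First, I would fix inside $\Sigma_{g,1}$ an embedded subsurface $S$ of genus bounded by a function of $k$, with $\partial S$ a single simple closed curve in the interior. I pick $k$ pairwise disjoint, non-isotopic, essential simple closed curves $c_1,\ldots,c_k \subset S$, and take $f \in \mathrm{Mod}(\Sigma_{g,1})$ supported on $S$. Concretely, one may take $f$ to be pseudo-Anosov on $S$ by applying Theorem \ref{thm:coset} to $S$ and a simple base 3-manifold such as $S^3$; the hypothesis $g \geq 7+2k$ is what is needed to fit such an $S$, with an ambient collar for the Stallings-twist construction used in Theorem \ref{thm:coset}, inside $\Sigma_{g,1}$. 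Since the $c_i$'s are disjoint and non-isotopic, $H_k := \langle D_{c_1},\ldots,D_{c_k}\rangle$ is free abelian of rank $k$, and $fH_k$ is the desired rank-$k$ abelian coset of $\mathrm{Mod}(\Sigma_{g,1})$.

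For any $\phi = f \cdot \prod_{i=1}^k D_{c_i}^{n_i} \in fH_k$, the fact that $\phi$ is supported on $S$ produces the torus decomposition
\[
T_\phi \;=\; T^S_\phi \;\cup_{\partial S \times S^1}\; \bigl( (\Sigma_{g,1} \setminus S) \times S^1 \bigr),
\]
in which the right-hand piece is Seifert fibered, hence has Gromov norm zero; subadditivity gives $\|T_\phi\| \leq \|T^S_\phi\|$. Now view $c_1,\ldots,c_k$ as disjoint curves sitting in a single fiber of $T^S_f$, and set $N := T^S_f \setminus (c_1 \cup \cdots \cup c_k)$. By the standard identification of $1/n$-Dehn surgery along a curve in a fiber with Dehn twisting the monodromy, $T^S_\phi$ is a Dehn filling of $N$ along its $k$ new torus cusps, with slopes determined by $n_1,\ldots,n_k$. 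Monotonicity of the Gromov norm under Dehn filling yields $\|T^S_\phi\| \leq \|N\|$. Finally, $M_\phi$ is itself obtained from $T_\phi$ by Dehn filling along $\partial \Sigma_{g,1} \times S^1$, so $\|M_\phi\| \leq \|T_\phi\| \leq \|N\|$. Setting $C := \|N\|$ finishes the argument, since $N$ depends only on the tuple $(S, f, c_1, \ldots, c_k)$, and that tuple depends only on $k$ and not on $g$.

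The main obstacle is the coordination of the choice of $(S, f, c_1, \ldots, c_k)$ so that $fH_k$ really has rank $k$ and, if desired, consists of pseudo-Anosov mapping classes (via Theorem \ref{thm:coset} applied to $S$). In particular, one needs the $c_i$'s to remain independent in $\mathrm{Mod}(\Sigma_{g,1})$ after extension by the identity, and the precise lower bound $g \geq 7 + 2k$ reflects the minimal room required for the subsurface-based Stallings-twist construction on which the pseudo-Anosov choice of $f$ is predicated.
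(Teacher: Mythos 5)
Your proof is correct, but it takes a genuinely different route from the paper's, and the comparison is worth spelling out. The paper obtains Corollary \ref{boundedintro} as an immediate consequence of Corollary \ref{bounded}: it uses the explicit fibered links $L'_{1,m,k}$ in $4_1(-1-m)$ constructed via the figure-eight knot, and the boundedness of the Gromov norm comes from the observation that all the elements of the coset are obtained by Dehn filling from the complement of a single augmented link $J_k \subset S^3$, where $J_k$ is independent of $m$ because the $m$ crossings lie in one twist region that is replaced by a crossing circle. The constant is $C(k) = B\|S^3\setminus J_k\|$. Your proof instead uses a subsurface-support argument: every element $\phi$ of the coset is supported on a fixed essential subsurface $S \subset \Sigma_{g,1}$ of genus controlled by $k$, so $T_\phi$ splits along $\partial S\times S^1$ into a piece $T^S_\phi$ plus a Seifert fibered product piece; $T^S_\phi$ is a Dehn filling of the fixed cusped manifold $N = T^S_f\setminus(c_1\cup\cdots\cup c_k)$; subadditivity of the Gromov norm under toroidal gluing and monotonicity under Dehn filling give $\|M_\phi\|\leq\|N\|$. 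Both proofs ultimately rest on monotonicity of $\|\cdot\|$ under Dehn filling, but your route is more elementary and avoids any explicit link diagram.

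There is one substantive tradeoff you should be aware of. Because your coset is supported on a proper subsurface, every mapping class in it is \emph{reducible} in $\mathrm{Mod}(\Sigma_{g,1})$. The paper's coset, coming from Corollary \ref{bounded}, consists of \emph{pseudo-Anosov} mapping classes (and even satisfies the AMU conjecture). The statement of Corollary \ref{boundedintro} does not demand pseudo-Anosov elements, so your construction is a valid proof of the stated corollary, but the paper's construction is strictly stronger and is the more striking counterpoint to Theorem \ref{thm:coset}, which produces arbitrarily large pseudo-Anosov abelian cosets in \emph{every} $3$-manifold.

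Two minor points. First, the invocation of Theorem \ref{thm:coset} to make $f|_S$ pseudo-Anosov is unnecessary baggage: since your $\phi$'s are reducible on $\Sigma_{g,1}$ anyway, any $f$ supported on $S$ works, and a pseudo-Anosov on $S$ can be produced much more cheaply. Second, the bound $g\geqslant 7+2k$ is not intrinsic to your approach; your argument works for any $g$ large enough to contain an essential copy of $S$ (genus roughly $k$ suffices to carry $k$ disjoint non-isotopic non-peripheral curves). The $7+2k$ threshold is an artifact of the paper's specific figure-eight construction, so your explanation of why that bound appears does not quite track; you should simply note that your construction applies to the range of $g$ required and more.
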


The author's interest in Theorem \ref{thm:coset} and Corollary \ref{boundedintro} is partly motivated by open questions in quantum topology. Indeed as we will explain next these results have applications
to a conjecture of  Andersen, Masbaum and Ueno \cite{AMU} that relate certain quantum mapping class groups representations to Nielsen-Thurston theory.  The results also provide constructions of fibered manifolds 
that satisfy a weak version of the Turaev-Viro invariants  volume conjecture \cite{Chen-Yang}.

\smallskip

\subsection{ Quantum topology applications}
Given an odd integer $r$,  a primitive $2r$-th root of unity and a coloring $c: |\partial \Sigma | \rightarrow \lbrace 0, 2,\ldots, r-3\rbrace,$
the $\mathrm{SO}(3)$-Witten-Reshetikhin-Turaev TQFT \cite{ BHMV2, ReTu, Turaevbook} gives a
projective representation $$\rho_{r,c} : \mathrm{Mod}(\Sigma) \rightarrow \mathrm{PGL}_{d_{r,c}}(\mathbb{C})$$
called the $\mathrm{SO}(3)$-\textit{quantum representation}.
The dimensions of the representations $d_{r,c}$ can be computed using the so-called Verlinde formula, see for example \cite{ BHMV2}.
 
 The quantum  representations have led to many interesting applications and have turned out to be one of the most fruitful tools coming out of quantum topology \cite{Andersen, GM, KS, MR12}.
However, the geometric content of the representations is still not understood. The AMU conjecture predicts how the Nielsen-Thurston classification of mapping classes should be reflected in the quantum representations.

\begin{conjecture}\label{AMU}{\rm{(AMU conjecture \cite{AMU})}}{  A  mapping class $\phi \in  \mathrm{Mod}(\Sigma)$ has  pseudo-Anosov parts if and only if, for any big enough  $r,$ there is a choice of colors $c$ of the components of $\partial \Sigma,$ such that $\rho_{r,c}(\phi)$ has infinite order.}
\end{conjecture}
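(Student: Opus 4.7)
The plan is to address the two directions of the AMU conjecture separately. For the easier implication, suppose $\phi \in \mathrm{Mod}(\Sigma)$ has no pseudo-Anosov component in its Nielsen--Thurston decomposition. Then, after possibly passing to a power, $\phi$ is a product of a periodic map and Dehn twists along a multicurve $C$ invariant under $\phi$. I would argue that periodic mapping classes have image of finite order under $\rho_{r,c}$ because, by Nielsen realization, they are realized as isometries of a hyperbolic structure on $\Sigma$, so they lie in a finite subgroup of $\mathrm{Mod}(\Sigma)$ and any finite-dimensional linear representation of a finite group has image of finite order. For Dehn twists, the explicit $\mathrm{SO}(3)$-TQFT formulas express $\rho_{r,c}(T_\gamma)$ up to projective factor as a diagonal matrix with entries that are $4r$-th roots of unity, so the order is bounded by $4r$. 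Combining these observations via the behavior of $\rho_{r,c}$ under cutting along the reducing multicurve $C$ (which reduces to the case of smaller subsurfaces with additional boundary coloring), one obtains that $\rho_{r,c}(\phi)$ has finite order at every admissible level.

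For the harder implication — the existence of $r$ and a coloring $c$ for which $\rho_{r,c}(\phi)$ has infinite order, given that $\phi$ has a pseudo-Anosov part — the natural approach, very much in the spirit of this paper, is to relate $\phi$ to a monodromy that lives in a $3$-manifold whose Turaev--Viro invariants are known to grow exponentially. More precisely, one would realize (a power of) $\phi$, after composition with elements in an appropriate abelian or free coset provided by Theorem \ref{thm:coset}, as the monodromy of an open book whose mapping torus $M_\phi$ is hyperbolic with controlled Gromov volume. A weak form of the Turaev--Viro volume conjecture of Chen--Yang for $M_\phi$ would then translate, through the Trace-TQFT identification $TV_r(M_\phi) \sim |\mathrm{Tr}(\rho_{r,c}(\phi))|$, into unboundedness of the eigenvalues of $\rho_{r,c}(\phi^n)$ in $n$, forcing $\rho_{r,c}(\phi)$ to have infinite order.

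The main obstacle is the hard direction in full generality. No technique currently handles an arbitrary pseudo-Anosov element on an arbitrary surface: one needs either a proof of the Chen--Yang volume conjecture for every hyperbolic fibered $3$-manifold, or some direct dynamical lower bound on $|\mathrm{Tr}(\rho_{r,c}(\phi^n))|$, both of which remain out of reach. What the present paper can realistically achieve, given Theorem \ref{thm:coset} and Corollary \ref{boundedintro}, is a \emph{coset-level} version of the conjecture: one constructs cosets $fH$ in $\mathrm{Mod}(\Sigma_{g,1})$ all of whose elements arise as monodromies of fibered $3$-manifolds for which the weak Turaev--Viro volume conjecture is either known or can be established through the Gromov-norm bounds obtained in the construction, and then deduces the AMU conjecture for those specific cosets. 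Upgrading this from distinguished cosets to every pseudo-Anosov mapping class is precisely where the argument stops and the conjecture remains open.
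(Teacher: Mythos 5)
You have correctly recognized that the statement is a \emph{conjecture}, not a theorem: the paper does not contain, and does not claim to contain, a proof of it. There is therefore no proof in the paper to compare your proposal against, and your identification of what is and is not known is accurate. The easy direction you sketch (no pseudo-Anosov part implies finite order) is indeed the known direction; the paper attributes it to \cite{AMU, Andersen2}, and your outline — periodic pieces handled by Nielsen realization and finiteness, Dehn twists having finite projective order since their eigenvalues are roots of unity, and a reduction along the invariant multicurve — is the standard argument, though the details of the multicurve reduction (how $\rho_{r,c}$ interacts with cutting and recoloring, and tracking projective ambiguities across the Birman exact sequence) require care and are not entirely routine. Your description of the hard direction as open is correct, and you have correctly identified the paper's actual contribution: the $q$-hyperbolicity/Turaev--Viro route from \cite{DK:AMU} yields the conjecture only for those mapping classes realized as monodromies of fibered links in $q$-hyperbolic $3$-manifolds, and the point of Theorems \ref{thm:coset}, \ref{prop:stab_abelian}, and \ref{prop:stab_free} is to produce entire abelian and free cosets of such monodromies. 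The gap you name — upgrading from distinguished cosets to all pseudo-Anosov classes, which would require either the full Chen--Yang volume conjecture for fibered hyperbolic manifolds or a direct lower bound on $|\mathrm{Tr}\,\rho_{r,c}(\phi^n)|$ — is precisely why the statement remains a conjecture. No objection; this is the right assessment.
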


To clarify the hypothesis of Conjecture \ref{AMU} recall that, by the  Nielsen-Thurston classification, a mapping class $\phi$ of infinite order is either \textit{ reducible} or   pseudo-Anosov. In the former case, a power of $\phi$ fixes a collection
of disjoint simple closed curves on $\Sigma$ and acts on the components of $S$ cut along these curves. If the induced  map is  pseudo-Anosov
on at least one component, then $\phi$ is said to have  pseudo-Anosov parts. One direction of Conjecture \ref{AMU} is known, that is if $\rho_{r,c}(\phi)$ has infinite order,
then $f$ has pseudo-Anosov parts.
It is also known that the conjecture \ref{AMU} is true for  a mapping class $\phi$ if and only if it is true for  its   pseudo-Anosov parts \cite{AMU, Andersen2}.

 \begin{theorem}\label{thm:fig8example} For any
$k\geqslant 1,$ there is a rank $k$ abelian coset of  $\mathrm{Mod}(\Sigma_{7+2k,1}),$ consisting entirely of mapping classes that satisfy the AMU conjecture.
Furthermore, the coset contains infinitely many pseudo-Anosov mapping classes.
\end{theorem}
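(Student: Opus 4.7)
The plan is to combine Theorem \ref{thm:coset} with the known link between exponential growth of Turaev-Viro invariants of mapping tori and the AMU conjecture. Prior work of the authors shows that for a pseudo-Anosov mapping class $\phi$ of a surface with boundary, a positive exponential growth rate for $|\mathrm{TV}_r(T_\phi)|$ forces AMU for $\phi$. The strategy is thus to construct a rank $k$ abelian coset whose mapping tori all admit a uniform positive lower bound on their TV growth rate.

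Concretely, I would apply Theorem \ref{thm:coset} to a closed $3$-manifold $M$ chosen so that the resulting pseudo-Anosov class $f$ has a mapping torus $T_f$ whose JSJ decomposition contains a copy of the figure-eight knot complement; the Turaev-Viro volume conjecture is known for the figure-eight complement by Detcherry-Kalfagianni-Yang. Using the flexibility in Theorem \ref{stabilize} afforded by the genus bound $g\geq 7+2k$, the Stallings curves $\gamma_1,\dots,\gamma_k$ generating the abelian part of the coset can be placed in JSJ pieces of $T_f$ disjoint from the figure-eight piece. For any $(n_1,\dots,n_k)\in\mathbb{Z}^k$, the mapping torus of $\phi = f\cdot T_{\gamma_1}^{n_1}\cdots T_{\gamma_k}^{n_k}$ is obtained from $T_f$ by Dehn surgery on the $\gamma_i$, which leaves the figure-eight JSJ piece unchanged. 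A subadditivity inequality for TV growth rates across incompressible tori then yields a uniform positive lower bound on the TV growth of $T_\phi$, and the TV-to-AMU implication gives AMU for every pseudo-Anosov element of the coset. Reducible elements of the coset without pseudo-Anosov parts satisfy AMU trivially by the known direction of the conjecture, and Fathi's theorem, as used in Theorem \ref{hyperbolic}, provides infinitely many pseudo-Anosov elements in the coset.

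The main obstacle is the geometric placement: arranging, via the stabilization of Theorem \ref{stabilize}, that the Stallings curves $\gamma_i$ lie in JSJ pieces of $T_f$ disjoint from a prescribed figure-eight piece, while keeping $f$ pseudo-Anosov. This is where the genus bound $g\geq 7+2k$ enters, providing enough room on the page both to accommodate a figure-eight piece in $T_f$ and to disengage the Stallings curves from it. A secondary ingredient is the subadditivity of TV growth under gluing along incompressible tori, needed to propagate positivity from the figure-eight piece to the full mapping torus of each coset element.
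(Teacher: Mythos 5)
Your overall skeleton — produce a pseudo-Anosov coset whose mapping tori are all q-hyperbolic, then cite the result of \cite{DK:AMU} that q-hyperbolicity of the mapping torus forces AMU — matches the paper's strategy. But the way you propose to certify q-hyperbolicity does not work, and the route through Theorem \ref{thm:coset} cannot produce the specific genus $7+2k$.

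The central problem is the JSJ idea. You want $f$ pseudo-Anosov in $\mathrm{Mod}(\Sigma_{g,1})$ with the mapping torus $T_f$ having a JSJ decomposition that contains a figure-eight knot complement piece. By Thurston's hyperbolization of fibered $3$-manifolds, a pseudo-Anosov monodromy on a surface with boundary forces $T_f$ to be a finite-volume cusped \emph{hyperbolic} manifold, hence atoroidal; its JSJ decomposition is trivial. So a figure-eight piece cannot sit inside $T_f$ as a proper JSJ piece, and you cannot simultaneously keep $f$ pseudo-Anosov and have the decomposition you are aiming for. Relatedly, the ``subadditivity of TV growth across incompressible tori'' that you invoke is not an available result, and the paper never needs it. The actual mechanism is much more direct: if the mapping torus $T_\phi$ is homeomorphic to a link complement in a closed q-hyperbolic $3$-manifold $M$, then $lTV(T_\phi)\geqslant lTV(M)>0$ by Theorem \ref{thm:q-hyperbolic}(5), and then \cite[Theorem 1.2]{DK:AMU} (Theorem \ref{tool}) gives AMU. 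The q-hyperbolicity comes from the \emph{ambient} closed manifold $M$, not from a decomposition of the mapping torus itself. You have conflated the mapping torus with its ambient manifold.

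Second, Theorem \ref{thm:coset} cannot produce the genus $7+2k$ asked for in the statement, because it only gives a coset for $g\geqslant g_1(M)$, where $g_1(M)$ is an implicit constant (essentially the minimal genus of a connected-binding pseudo-Anosov open book for $M$) over which we have no control. The only path you sketch to AMU also requires choosing $M$ q-hyperbolic, and there is no reason $g_1(M)\leqslant 7+2k$ for such an $M$. This is exactly why the paper does \emph{not} prove Theorem \ref{thm:fig8example} as a consequence of Theorem \ref{thm:coset}, but instead builds an explicit family of two-component fibered hyperbolic links $L_{l,m,k}\subset S^3$ (one component a figure-eight knot) with fiber genus $2+m+l+2k$, locates $k$ disjoint Stallings curves on the fiber, fills the figure-eight component along the fiber framing $-m-l$ to land in a q-hyperbolic $4_1(p)$ with $|p|=m+l\geqslant 5$, and verifies hyperbolicity of the resulting knot via Futer--Purcell and the $6$-theorem. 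Taking $m=4$, $l=1$ gives fiber genus exactly $7+2k$. If you want to salvage your plan, replace the JSJ/subadditivity step with: exhibit $T_f$ as the complement of a fibered link in a single q-hyperbolic closed manifold $M$ such that the Stallings curves stay disjoint from the binding, and then quote Theorem \ref{thm:q-hyperbolic}(5) and Theorem \ref{tool}; and abandon Theorem \ref{thm:coset} for the genus count in favor of an explicit link construction.
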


In \cite{AMU} Andersen, Masbaum and Ueno  verified the conjecture for $\Sigma_{0,4}.$ Later, Santharoubane proved it for $\Sigma_{1,1}$  \cite{San12}  
and Egsgaard and Jorgensen   \cite{EgsJorgr}
have partial results for pseudo-Anosov maps on $\Sigma_{0,2n}$. In both cases the quantum representations can be asymptotically connected  to
 homological representations, and this connection is the main tool used in those results.  Santharoubane \cite{San17}
gave additional evidence to the conjecture for  $\Sigma_{0,2n},$ by relating the asymptotics of quantum representations to MacMullen's braid group representations.
However, for higher genus surfaces, there is no known connection between quantum and homological representations.
For $g\geqslant 2$,  the first examples of mappings  classes that satisfy the AMU conjecture, were given by March\'e and Santharoubane in \cite{MarSan}. Their method allows the construction of finitely many conjugacy classes of pseudo-Anosov examples in $\mathrm{Mod}(\Sigma_{g,1})$ by choosing suitable  kernel elements of the corresponding Birman exact sequence.
 
 In \cite{DK:AMU} we presented a new approach to Conjecture \ref{AMU} that seems to be the most powerful and promising  strategy currently available.
 Our approach is to relate the AMU conjecture
 to a weak  version of the volume conjecture stated by Chen and Yang \cite{Chen-Yang} which we first addressed in \cite{DK:Volume}.
 Using this approach, we gave  the first infinite families of pseudo-Anosov examples that satisfy the AMU conjecture. More specifically, we showed that, for
 $g\geqslant n\geqslant 3$ or $n=2$ and $g\geqslant 3,$ in $\mathrm{Mod}(\Sigma_{g,n})$ there exists infinitely many non-conjugate, non power of each other, pseudo-Anosov maps that satisfy the conjecture.
Kumar \cite{Sanjay}, also using the approach of   \cite{DK:AMU}, constructed infinite families of such examples in $\mathrm{Mod}(\Sigma_{g,4})$, for 
for $g>2$. Moreover, for any $n>8$,  he gave explicit families of elements  in  $\Sigma_{0, n}$ that satisfy the AMU conjecture.
 
 In this paper, we turn our attention to the mapping class group $\mathrm{Mod}(\Sigma_{g,1})$ of compact surfaces with one boundary component, and construct entire cosets of abelian and free subgroups that satisfy Conjecture \ref{AMU}. Our method for proving Theorem \ref{thm:fig8example} is flexible and can be adapted to construct similar cosets for surfaces with more boundary components.

 To describe the approach of \cite{DK:AMU} in more detail,
 given a manifold $M,$ let $TV_r(M)$ be the $\mathrm{SO}(3)$-Turaev-Viro invariant of $M$ at odd  $r\geq 3$ and root of unity $q=e^{\frac{2i\pi}{r}}.$ Also, let us define the \textit{Turaev-Viro limit} $lTV(M)$ by 
\begin{equation}
lTV(M)=\underset{r\rightarrow \infty, r \ \textrm{odd}}{\liminf} \frac{2\pi}{r}\log TV_r(M)
\end{equation}

To facilitate our exposition, we  define   a compact oriented $3$-manifold $M$ to be  \textit{q-hyperbolic} iff
we have  $lTV(M)>0.$
 
An important open problem in quantum topology is the volume conjecture of  \cite{Chen-Yang} asserting that  for any finite volume hyperbolic 3-manifold $M$ we have $lTV(M)=\mathrm{Vol}(M),$  which in particular implies that $M$ is q-hyperbolic.
A perhaps more robust conjecture, supported by the computations of \cite{Chen-Yang} and  the results of  \cite{ BDKY, D:cabling,  DK:Volume, DKY}, is the following.
\begin{conjecture}\label{EGC}{ \rm{ (Exponential growth conjecture)}} \label{EGC} For $\Sigma$ a compact orientable surface and $f\in \mathrm{Mod}(\Sigma),$  let $M_f$ denote the mapping torus of $f$ and let $||M_f||$ denote
the Gromov norm of $M_f$.
Then, $M_f$ is q-hyperbolic if and only if $||M_f||>0.$ 
\end{conjecture}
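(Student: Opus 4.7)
The plan is to separate the two directions of the equivalence and exploit geometrization to reformulate the Gromov-norm hypothesis in terms of the pseudo-Anosov part of $f$.

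For the implication $lTV(M_f)>0 \Rightarrow ||M_f||>0$, I would aim for a general upper bound of the form $lTV(M) \leq C\cdot ||M||$ with $C$ a universal constant. When $||M_f||=0$ the mapping torus $M_f$ is a graph manifold, and the strategy is to cut along the JSJ tori, apply the known gluing formulas for Turaev-Viro invariants, and use the explicit Verlinde-type formulas for $TV_r$ of Seifert fibered pieces to bound $TV_r(M_f)$ polynomially in $r$. This reduces the first direction to a structural analysis of graph manifolds through their Seifert decomposition.

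For the harder direction $||M_f||>0 \Rightarrow lTV(M_f)>0$, geometrization again recasts the hypothesis as $f$ having pseudo-Anosov parts, so the task is to produce exponential lower bounds on $TV_r(M_f)$ for every such $f$. Here I would leverage the link between the AMU conjecture and $q$-hyperbolicity developed in \cite{DK:AMU}: if $\rho_{r,c}(f)$ has infinite order for all large $r$ and some boundary colouring $c$, then the traces assembling $TV_r(M_f)$ must grow exponentially in $r$, forcing $lTV(M_f)>0$. Concretely I would seed the argument with explicit $q$-hyperbolic building blocks (for instance the figure-eight knot complement), and then propagate $q$-hyperbolicity through cabling and Dehn-filling formulas, together with Stallings-twist stabilizations along the lines of Theorem \ref{stabilize} and the coset constructions of Theorem \ref{thm:coset}, in order to cover the pseudo-Anosov stratum of $\mathrm{Mod}(\Sigma_{g,1})$ as fully as possible.

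The main obstacle is the lower-bound direction in full generality: no current tool produces exponential growth of $TV_r(M)$ from the sole assumption $||M||>0$ without additional combinatorial or geometric input from $f$. A complete proof would likely require either a direct comparison between quantum hyperbolic volume and simplicial volume, or a uniform procedure expressing every hyperbolic fibered $3$-manifold as a controlled combination of $q$-hyperbolic pieces. Accordingly I would expect this approach to establish the conjecture only on the large explicit families furnished by Theorems \ref{thm:coset} and \ref{thm:fig8example}, which appears to be the strongest general conclusion currently within reach.
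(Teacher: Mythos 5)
The statement you were asked to prove is recorded in the paper as a \emph{conjecture}, not a theorem, and the paper itself offers no proof of it, only partial evidence. Your first direction is consistent with what the paper reports: the universal inequality $lTV(M_f)\leqslant C\cdot ||M_f||$ from \cite[Theorem 1.1]{DK:Volume} immediately yields that q-hyperbolicity of $M_f$ forces $||M_f||>0$ (when $||M_f||=0$ the mapping torus is a graph manifold and $lTV(M_f)\leqslant 0$). Your candid conclusion that the other direction remains out of reach in full generality also matches the paper's stance.

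There is, however, a genuine logical inversion in your sketch of the hard direction $||M_f||>0 \Rightarrow lTV(M_f)>0$. You propose to derive exponential growth of $TV_r(M_f)$ from the infinite order of $\rho_{r,c}(f)$, in effect deducing q-hyperbolicity from the AMU-type conclusion. But the implication established in \cite{DK:AMU} runs the other way: if $M_f$ is q-hyperbolic, then $f$ satisfies the AMU conjecture, so Conjecture \ref{EGC} implies Conjecture \ref{AMU}, not conversely. Infinite order of a single projective image $\rho_{r,c}(f)$ gives no lower bound on $TV_r(M_f)$, which is an absolute value of a weighted sum of traces over colorings and can in principle suffer massive cancellation; the step you outline would therefore need the converse of the known implication, and that converse is exactly as open as the conjecture itself. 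The constructive pieces of your proposal (seeding with figure-eight surgeries and fundamental shadow links, propagating q-hyperbolicity through drilling, cabling, and doubling, and exploiting Stallings stabilizations via Theorem \ref{stabilize} and Theorem \ref{thm:coset}) do reproduce what the paper achieves on explicit abelian and free cosets (Theorem \ref{thm:fig8example}, Corollary \ref{apAMU}), but, as you already acknowledge, this leaves the full pseudo-Anosov stratum untouched and does not constitute a proof of the conjecture.
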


By the work of Thurston, $f$ has non-trivial  pseudo-Anosov parts if and only if the JSJ-decomposition  of the mapping torus $M_f$ contains hyperbolic parts or equivalently  iff we have $||M_f||>0.$ 
In particular,  $M_f$ is hyperbolic  precisely when  $f$ is pseudo-Anosov, and in this case Conjecture \ref{EGC} is implied by the volume conjecture of \cite{Chen-Yang}.

A result of the authors \cite[Theorem 1.1]{DK:Volume} implies that for every $f\in \mathrm{Mod}(\Sigma),$ we have  $lTV(M_f)\leqslant C \cdot  ||M_f||,$
for some universal constant $C>0$. This in turn implies that if $M_f$ is q-hyperbolic,  then we have  $||M_f||>0,$
which gives 
 the  ``if" direction of Conjecture \ref{EGC}. 
On the other hand, in \cite{DK:AMU} we also proved  that  if $M_f$ is q-hyperbolic, then  $f$ satisfies the AMU conjecture. Hence
Conjecture \ref{EGC} implies Conjecture \ref{AMU}. 

Although the  Chen-Yang volume conjecture  is wide open, there are vast families of q-hyperbolic manifolds and of manifolds that satisfy Conjecture \ref{EGC}. For example,  we have families of closed  q-hyperbolic 3-manifolds of arbitrarily large Gromov norm  \cite{DK:Volume} and we know that 3-manifolds obtained by drilling out links from q-hyperbolic manifolds are also q-hyperbolic  \cite{DK:Volume}.
A more detailed list of $3$-manifolds known to be q-hyperbolic will be described in Section \ref{sec:q-hyperbolic}.
Applying Theorem \ref{thm:coset} to q-hyperbolic manifolds $M,$ we have the following.
\begin{corollary}\label{apAMU} Suppose that $M$  is q-hyperbolic. Then the mapping classes in the abelian and free cosets given by Theorem  \ref{thm:coset} satisfy the AMU conjecture and the corresponding mapping tori are q-hyperbolic.
\end{corollary}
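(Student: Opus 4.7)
The plan is to combine two results already cited in the introduction: the drilling stability of q-hyperbolicity from \cite{DK:Volume}, and the AMU implication for q-hyperbolic mapping tori from \cite{DK:AMU}. Given a mapping class $f$ in one of the cosets produced by Theorem \ref{thm:coset}, we know that $f$ is pseudo-Anosov and that it arises as the monodromy of an open book decomposition of $M$ with page $\Sigma = \Sigma_{g,1}$ (or $\Sigma_{g+4,1}$ in the free-coset case) and binding $B$. The mapping torus $M_f$ is then canonically identified with the complement $M \smallsetminus \nu(B)$ of an open tubular neighborhood of $B$ in $M$.

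First I would record this identification and observe that, since $B$ is a link in $M$ (a knot in the free-coset case), $M_f$ is a link complement in $M$. Second, I would invoke the result from \cite{DK:Volume} stating that link complements in q-hyperbolic manifolds are themselves q-hyperbolic; since $M$ is q-hyperbolic by hypothesis, it follows that $lTV(M_f) > 0$. Third, I would apply the implication from \cite{DK:AMU} that $lTV(M_f) > 0$ forces $f$ to satisfy the AMU conjecture. This gives both assertions of the corollary simultaneously for every member of the coset.

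Since the construction of Theorem \ref{thm:coset} produces the whole coset as monodromies of open books of the same fixed $M$, the above argument applies uniformly to each element $f$ of the coset, not merely to the distinguished representative. This is essential: without the open-book structure the abelian or free coset need not consist entirely of monodromies, and we would lose the identification $M_f \cong M \smallsetminus \nu(B_f)$.

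There is no real obstacle here beyond correctly citing the two black-box inputs; the only small subtlety is ensuring that the drilling result of \cite{DK:Volume} is stated for arbitrary links (including disconnected bindings in the abelian-coset case) rather than only knots, and that it applies to the notion of q-hyperbolicity through $lTV$ as defined earlier in the introduction. Both of these are already reviewed in Section \ref{sec:q-hyperbolic}, so I would simply cite that section together with \cite{DK:AMU} to conclude.
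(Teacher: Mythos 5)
Your proposal is correct and takes essentially the same approach as the paper; the paper packages the two black-box inputs (drilling stability of q-hyperbolicity from \cite{DK:Volume} and the implication ``$lTV(M_f)>0 \Rightarrow f$ satisfies AMU'' from \cite{DK:AMU}) into the intermediate Theorem~\ref{tool}, which is what gets invoked after Theorem~\ref{thm:coset}. One small imprecision: you assert that every $f$ in the coset is pseudo-Anosov, but Theorem~\ref{thm:coset} only guarantees this for the abelian coset; the free coset is merely asserted to contain infinitely many pseudo-Anosov elements. This does not affect the argument, since Theorem~\ref{tool} (and the underlying result of \cite{DK:AMU}) applies to any monodromy of a fibered link in $M$, not only pseudo-Anosov ones, and also note that since both cosets live in $\mathrm{Mod}(\Sigma_{g,1})$ or $\mathrm{Mod}(\Sigma_{g+4,1})$ the binding is connected, so the ``arbitrary links'' subtlety you flag at the end does not actually arise here.
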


Note that the  method of our proof of Theorem \ref{thm:coset}, combined with hyperbolic Dehn filling techniques,  allows  the construction of pairs of cosets that satisfy Conjecture \ref{AMU} and  are independent in the sense that
no mapping class in one coset is a conjugate of a mapping class in the other. See Corollary \ref{independent}.
\smallskip

\subsection{Organization}  In Section \ref{sec:prelim} we lay out known definitions and results that we will use in the remaining of the paper. In Section \ref{sec:filling} we define a notion of independence of mapping classes 
and we give the proof of a refined version of a result of Long-Morton and Fathi  that we need for the proof of Theorem \ref{thm:coset}. We also discuss the existence of infinitely many
independent  pseudo-Anosov classes in certain mapping class group cosets (Corollary \ref{infinitelymany}). In Section \ref{sec:stabilization} we prove Theorem \ref{thm:coset}  and Corollary \ref{apAMU}.
In Section \ref{sec:fig8} we give constructions of fibered knots in certain 3-manifolds obtained by surgery on the figure eight knot. The main result of the section is Theorem \ref{thm:fig8examplegeneral} which, in particular, implies Corollary \ref{boundedintro} and Theorem \ref{thm:fig8example}.

We note that, although our results here are partly motivated by questions in quantum topology, the techniques and tools we use are from geometric topology and hyperbolic geometry. In particular, one needs no additional knowledge of quantum topology, than what  is given in this Introduction, to read the paper.

\vskip 0.08in

{\bf{Acknowledgements.}} We thank J. Andersen, Matt Hedden, Slava Krushkal, Feng Luo and Filip Misev for their interest in this work and for useful conversations. We also thank  Dave Futer for bringing to our attention the article \cite{ HamTeh}.  This work is based on research done while  Kalfagianni was on sabbatical leave from MSU and supported by NSF grants DMS-1708249 and DMS-2004155 and a grant from  the Institute for Advanced Study School of Mathematics.  Detcherry was supported by the Max Planck Institute for Mathematics during this work and thanks the institute for its hospitality and support.


\section{Preliminaries}
\label{sec:prelim}

In this section we summarize  some definitions and results that we will use in this paper.

\subsection{Open book decompositions, fibrations and Stallings twist} \label{subsec:Stallings}
We start by describing a more $2$-dimensional way of thinking of fibered links in a closed compact oriented $3$-manifold $M,$ or equivalently, open book decompositions of $M.$
We will keep track of an open book decomposition of a $3$-manifold $M$ as a pair $(\Sigma,h),$ where $\Sigma$ is the fiber surface and $h$ is the monodromy. 
The  mapping torus $M_{h}=\Sigma \times [0,1]/_{(x,1)\sim ({ {h}(x),0)}}$, with fiber $\Sigma:= \Sigma \times \{0\}$, is homeomorphic to a link complement in $M$.
The 3-manifold $M$ is determined, up to homeomorphism, as the relative mapping torus of  $h$:  that is, $M$ is homeomorphic to the quotient of $M_h$
under the identification
$(x,t)\sim (x, t'), \ \ {\rm for \ all}\ \  x\in\partial \Sigma, t,t'\in [0, 1].$
The quotient of $\partial M_h$ under this identification, is the fibered link $K$,  called the binding of the open-book decomposition.
We will slightly abuse the setting and consider $M_f$ embedded in $M$ so that it is the complement of a neighborhood of $K$.
It is known since Alexander that any compact oriented $3$-manifold has an open book decomposition and work of Myers \cite{Myers:open_books} shows that  there is one with connected binding and pseudo-Anosov monodromy.

Suppose that there is a homotopically non-trivial curve $c$  on $\Sigma$ that bounds an embedded disk $D$ in $M$, which intersects $\Sigma$ transversally, such that we have
$lk(c,c^+)=0,$ where $c^+$ is the curve $c$ pushed along the normal of $\Sigma$ in $\Sigma \times \{ t\}$, for small $t>0$. We will refer to this as pushing off in the positive direction.
The disk $D$ can be enclosed in a 3-ball $B$ that intersects $\Sigma$ transversally. In this ball we can perform 
a  twist of order $m$ along $D.$ 
\begin{definition} \label{def:Stallingstwist}  This operation above is called a \textit{Stallings twist}  of order $m$, along $c$. We will say that the curve $c$  supports a Stallings twist.
A curve that supports a Stallings twist will be called a \textit{Stallings curve}.
\end{definition}

We will consider $M, \Sigma$ and $K$ oriented so that the orientation of $\Sigma$ (resp. $K$) is induced by that of $M$ (resp. $K$).
A Stallings twist can also be thought of as performing $1/m$ surgery along the unknotted curve  $c$ inside $B$, where the framing of $c$ is induced by  $\Sigma$ on $c$.
This operation will not change the ambient manifold $M$ but it changes $M_f$ to a mapping torus with fiber $\Sigma$ and monodromy $f\circ \tau_{c}^m$. See \cite{Stallings} for more details.
\smallskip

\subsection{Families of q-hyperbolic $3$-manifolds}
\label{sec:q-hyperbolic}
We recall from Section \ref{sec:intro} that a q-hyperbolic $3$-manifold was defined to be a $3$-manifold $M$ such that $lTV(M)>0.$ Such manifolds will be the starting pieces in our constructions in Section \ref{sec:fig8} and Section \ref{sec:stabilization}.

The following theorem will sum up the known examples of q-hyperbolic manifolds:

\begin{theorem}\label{thm:q-hyperbolic}The following compact oriented $3$-manifolds are q-hyperbolic:
\begin{enumerate}
\item The figure-eight knot and the Borromean rings complements in $S^3$.
\item  Any 3-manifold obtained by Dehn surgery on the figure-eight knot with slopes  determined by integers $p$ with $|p|\geqslant 5$.
\item The fundamental shadow-link complements of \cite{CoThu}. This is an infinite family of cusped hyperbolic manifolds
with arbitrarily large volumes.
\item Suppose that $L$ is a link in $S^3$ with q-hyperbolic complement. Then any link obtained from $L$ by replacing a component with any number of parallel copies or by a $(2n+1,2)$-cable 
has q-hyperbolic complement.
\item Any $3$-manifold $M'$ that can be obtained from a q-hyperbolic manifold $M$  by drilling solid tori.  In fact we have  $LTV(M')\geqslant LTV(M)>0$.
\item Any  link complement in any q-hyperbolic 3-manifold.
\item Any closed orientable  3-manifold $DM$ obtained by gluing two oppositely oriented copies of a q-hyperbolic 3-manifold $M$ with toroidal boundary along $\partial M.$
\end{enumerate}
\end{theorem}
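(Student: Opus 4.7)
The plan is to treat this as a catalog-type theorem whose proof is essentially a careful compilation of results from the literature together with a few short arguments that combine them. I would organize the proof by going through the seven items in order, and in each case either quote an explicit lower bound on $lTV$ from a known reference, or reduce to an earlier item via an inequality between Turaev-Viro invariants.

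For (1) I would simply invoke the explicit computations of Chen-Yang \cite{Chen-Yang}, which verify the volume conjecture for both the figure-eight knot complement and the Borromean rings complement; in both cases one gets $lTV(M)=\vol(M)>0$. For (2), I would again cite the Chen-Yang verification for integral Dehn fillings on the figure-eight knot with slopes of absolute value at least five (these are exactly the fillings for which the resulting closed manifold is hyperbolic and for which the volume conjecture has been checked). For (3), the fundamental shadow-link complements are treated in Belletti-Detcherry-Kalfagianni-Yang (and previously in work going back to Costantino-Thurston \cite{CoThu}); the reference shows that these infinitely many cusped hyperbolic manifolds satisfy $lTV(M)=\vol(M)$ with arbitrarily large volume, so they are q-hyperbolic.

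For (4), which concerns parallel copies and $(2n+1,2)$-cables of components of a q-hyperbolic link, I would cite \cite{D:cabling} for the cabling inequality $lTV(L')\geqslant lTV(L)$, and a similar monotonicity statement for cabling/parallelization from \cite{BDKY} or \cite{DKY}. For (5), which is the key drilling inequality, I would invoke \cite[Theorem 1.1]{DK:Volume} (stated in the excerpt), which directly gives $lTV(M')\geqslant lTV(M)$ whenever $M'$ is obtained from $M$ by drilling an incompressible solid torus; positivity of $lTV(M)$ then passes to $M'$. Statement (6) is an immediate corollary of (5), since a link complement in $M$ is obtained from $M$ by drilling out a neighborhood of the link.

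The one part that is not purely a citation is (7), the doubling construction, and this is where I expect the main (though still modest) technical work to lie. The plan is to use the gluing behavior of Turaev-Viro invariants along tori: if $DM$ is the double of $M$ along $\bdy M$, then standard TQFT gluing gives a sum-over-labelings formula expressing $TV_r(DM)$ as an inner product of the vectors associated to the two copies of $M$ in the Turaev-Viro Hilbert space of $\bdy M$. Picking out a single dominant term (corresponding to an optimal labeling of $\bdy M$) yields a bound of the form $TV_r(DM)\geqslant c_r \, |TV_r(M;c)|^{2}$ for some coloring $c$ and a factor $c_r$ subexponential in $r$, and hence $lTV(DM)\geqslant 2\,lTV(M)>0$. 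The main obstacle is verifying that one may indeed choose the coloring so that the corresponding $TV_r(M;c)$ captures the full exponential growth rate $lTV(M)$; this should follow from the definition of $TV_r$ as a sum of positive terms over boundary colorings, so that at least one coloring contributes at the full exponential rate.
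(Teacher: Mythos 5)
Your overall plan — a catalog-style proof that quotes known results and reduces a couple of items to earlier ones — is exactly what the paper does, but several of your citations are off in ways that would actually invalidate the argument if taken literally.

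For (1) and (2) you cite Chen--Yang \cite{Chen-Yang}, but that paper only conjectures the statement and provides numerical evidence; the rigorous proof for the figure-eight and Borromean rings complements is due to Detcherry--Kalfagianni--Yang \cite{DKY}, and the proof for integral surgeries on the figure-eight knot (with $|p|\geqslant 5$) is due to Ohtsuki \cite{Ohtsuki:figure8}. For (5) you invoke \cite[Theorem 1.1]{DK:Volume}, but as the paper's own discussion makes clear, that theorem gives the \emph{upper} bound $lTV(M_f)\leqslant C\cdot\Vert M_f\Vert$, which says nothing about drilling; the drilling monotonicity $lTV(M')\geqslant lTV(M)$ is \cite[Corollary 5.3]{DK:Volume}. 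Your reading of (4) is in the right spirit (the relevant references are \cite[Corollary~8.4]{DK:Volume} together with \cite{D:cabling}), and (3) and (6) are handled as the paper does.

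For (7), the paper simply cites \cite[Theorem 3.1 and Remark 3.4]{DKY}, whereas you attempt to rederive it from a TQFT gluing formula. The idea of writing $TV_r(DM)$ as an inner product of boundary vectors and isolating a dominant coloring is morally the mechanism, but your sketch overclaims: you assert $lTV(DM)\geqslant 2\,lTV(M)$ from a bound of the form $TV_r(DM)\geqslant c_r\,|TV_r(M;c)|^2$, and it is not established here that such a pointwise squared lower bound holds (the gluing formula for $TV$ along a torus produces a weighted sum over boundary colorings, and the weights and normalizations must be tracked before one can extract a clean square). Since the precise statement you need is already a theorem in \cite{DKY}, you should just cite it rather than leave this as a heuristic; if you do want to reprove it, the step that needs an actual argument is exactly the one you flagged as a ``main obstacle.''
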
 
\begin{proof} The Turaev-Viro invariants volume conjecture is known for the classes of manifolds in (1)-(3) above: For figure-eight knot and the Borromean rings complements 
the conjecture was proved by Detcherry-Kalfagianni-Yang \cite{DKY}, for the integral surgeries on  the figure-eight knot  by Ohtsuki  \cite{Ohtsuki:figure8} and for fundamental shadow link complements by Belletti-Detcherry-Kalfagianni-Yang \cite{BDKY}.
Part (4) follows by \cite[Corollary 8.4]{DK:Volume} and by the main result of
\cite{D:cabling}.  Part (5) follows from \cite[Corollary 5.3]{DK:Volume} and part (6) is just a reformulation of part (5). Finally, part (7) follows  by \cite[Theorem 3.1 and Remark 3.4]{DKY}.
\end{proof}

As a corollary of Theorem \ref{thm:q-hyperbolic} we have the following.

\begin{corollary} \label{large} There are closed  $q$-hyperbolic 3-manifolds of arbitrary large Gromov norm.
\end{corollary}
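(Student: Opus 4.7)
The natural plan is to combine parts (3) and (7) of Theorem \ref{thm:q-hyperbolic}. First, by part (3), the fundamental shadow-link complements provide an infinite family $\{M_n\}$ of cusped, finite-volume hyperbolic, q-hyperbolic $3$-manifolds with $\mathrm{Vol}(M_n)\to\infty$. Gromov's theorem that $\mathrm{Vol}(N)=v_3\cdot\|N\|$ for any finite-volume hyperbolic $3$-manifold $N$ then gives $\|M_n\|\to\infty$, so we at least obtain \emph{cusped} q-hyperbolic manifolds with arbitrarily large Gromov norm.

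To pass from cusped to closed I would double each $M_n$: since $M_n$ is a link complement, $\partial M_n$ is a disjoint union of tori, and part (7) applies to produce a closed orientable $3$-manifold $DM_n$ which is again q-hyperbolic. The required lower bound on $\|DM_n\|$ comes from Gromov's additivity of simplicial volume under gluing along incompressible tori: the cusp tori of a cusped hyperbolic manifold are incompressible in $M_n$, and they remain incompressible in $DM_n$ (a compressing disk in $DM_n$ would, by cut-and-paste across a glued torus, yield a compressing disk inside one copy of $M_n$). Additivity then gives $\|DM_n\|=2\|M_n\|$, which diverges by the previous paragraph. So $\{DM_n\}$ is a family of closed q-hyperbolic $3$-manifolds with $\|DM_n\|\to\infty$.

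The only mild subtlety I anticipate is the invocation of Gromov's additivity for the simplicial volume across the glued tori; but this is standard once incompressibility is in hand, and incompressibility is immediate from the hyperbolic geometry of each $M_n$. Both the q-hyperbolicity and the Gromov-norm estimate are delivered essentially for free by the two cited parts of Theorem \ref{thm:q-hyperbolic} together with one classical additivity result, so I expect no further obstacle.
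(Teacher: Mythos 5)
Your proposal is correct and follows the same route as the paper: take fundamental shadow-link complements (part (3) of Theorem \ref{thm:q-hyperbolic}), double them (part (7)), and use additivity of Gromov norm across the incompressible cusp tori to conclude that the resulting closed q-hyperbolic manifolds have arbitrarily large Gromov norm. The paper states this more tersely but invokes exactly the same two ingredients.
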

\begin{proof} Given an oriented $q$-hyperbolic 3-manifold $M$ with toroidal boundary
let $D(M)$ denote the double. By Theorem \ref{thm:q-hyperbolic},  it follows that $D(M)$ is $q$-hyperbolic. On the other hand, the doubles of  fundamental shadow-link complements have arbitrarily large Gromov norm.
\end{proof}

Finally, we need to recall the following theorem.

\begin{theorem}\label{tool} Let $M$ be a closed q-hyperbolic 3-manifold. Then any mapping class $f\in \mathrm{Mod}(\Sigma_{g,n})$
that occurs as monodromy of a fibered link in $M$ satisfies the AMU conjecture.
\end{theorem}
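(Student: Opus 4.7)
The plan is to reduce the statement to two facts already in the paper: first, that the mapping torus of $f$ can be realized as a link complement inside $M$; second, that drilling a link out of a q-hyperbolic manifold yields a q-hyperbolic manifold; and third, that q-hyperbolicity of the mapping torus forces $f$ to satisfy the AMU conjecture.

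First I would unpack the hypothesis. If $f \in \mathrm{Mod}(\Sigma_{g,n})$ occurs as the monodromy of a fibered link $L \subset M$, then by the discussion in Subsection \ref{subsec:Stallings} the mapping torus $M_f$ is canonically identified with the exterior $M \setminus N(L)$ of an open neighborhood of $L$. In particular, $M_f$ is obtained from the closed 3-manifold $M$ by drilling out the solid-torus neighborhoods of the components of $L$.

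Next I would invoke Theorem \ref{thm:q-hyperbolic}, specifically parts (5) and (6): since $M$ is q-hyperbolic and $M_f$ is obtained from $M$ by drilling out a link, we have $lTV(M_f) \geqslant lTV(M) > 0$, so $M_f$ is itself q-hyperbolic. This is the crux of the argument; all the geometric content is packaged into this inequality from \cite[Corollary 5.3]{DK:Volume}.

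Finally, I would apply the theorem from \cite{DK:AMU} recalled in the introduction: whenever the mapping torus $M_f$ of a mapping class $f$ is q-hyperbolic, $f$ satisfies the AMU conjecture. Combining these three steps gives the desired conclusion. There is no serious obstacle; the statement is essentially a packaging of two previously established results together with the identification of $M_f$ with the link complement, and the main point is simply to point the reader at the correct references.
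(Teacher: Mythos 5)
Your proposal is correct and follows essentially the same route as the paper's proof: identify $M_f$ with the link exterior in $M$, invoke the drilling part of Theorem \ref{thm:q-hyperbolic} to conclude $M_f$ is q-hyperbolic, then cite \cite[Theorem 1.2]{DK:AMU} to deduce AMU. If anything, your citation of parts (5)/(6) of Theorem \ref{thm:q-hyperbolic} is more apt than the paper's reference to part (4), and the only small wrinkle you omit is the footnoted observation that \cite[Theorem 1.2]{DK:AMU} is stated for pseudo-Anosov $f$ but its proof does not use that hypothesis.
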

\begin{proof} Let $L$ be an $n$-component fibered link in $M$. By Theorem  \ref{thm:q-hyperbolic} (4), the complement of $L$ is q-hyperbolic.
On the other hand, if we choose a fibration of this complement and a monodromy  $f$,  we can realize it  as a mapping torus $M_{f}=\Sigma_{g,n} \times [0,1]/_{(x,1)\sim ({ {f}(x),0)}},$
where $g$ is the genus of the fiber. Now the result follows by  \cite[Theorem 1.2]{DK:AMU}\footnote{Note that although \cite[Theorem 1.2]{DK:AMU} is stated for pseudo-Anosov maps,
the proof given does not use this hypothesis.}
\end{proof}

\subsection{ Abelian and free subgroups of mapping class groups}
It is known that the mapping class group $ \mathrm{Mod}(\Sigma)$ of any compact, connected,  oriented surface $\Sigma=\Sigma_{g,n}$ satisfies a ``Tits alternative property". That is, any subgroup of 
$ \mathrm{Mod}(\Sigma)$ either contains an abelian group of finite index or it contains a free non-abelian group \cite{Tits}.  Thus abelian and free subgroups are abundant in $ \mathrm{Mod}(\Sigma)$.
Furthermore it is known that any abelian subgroup of
$ \mathrm{Mod}(\Sigma)$ is finitely generated and has rank at most $3g-3+n$ \cite{BLM}.

Let $\Gamma$ be a set of $k$ essential,  simple,  closed, disjoint curves on $\Sigma$ that are non-boundary parallel and no pair of which is parallel to each other. Then the Dehn twists along the curves 
in $\Gamma$  generate a free abelian subgroup of rank $k$.  All the abelian subgroups  of $ \mathrm{Mod}(\Sigma)$ that we will consider in this paper are of this type.
To produce free subgroups we will use the following theorem of Hamidi-Tehrani.
\begin{theorem}\label{Hamidi-Tehrani} {\rm{ (\cite[Theorem 3.5]{HamTeh})}}  Let $a,b$  be essential, simple closed curves on $\Sigma$ and let $i(a, b)$ denote their intersection number.  Let $\tau_a, \tau_b$ denote the positive Dehn twists along $a,b$, respectively. Suppose that $i(a, b)\geqslant 2$.
 Then, the subgroup $\left<\tau_a, \tau_b\right> <\mathrm{Mod}(\Sigma)$ generated by $\tau_a$ and $\tau_b$
 is a free group of rank two.
\end{theorem}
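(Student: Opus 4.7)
The plan is to apply the ping-pong lemma to the action of $\mathrm{Mod}(\Sigma)$ on the set $\mathcal{C}$ of isotopy classes of essential simple closed curves on $\Sigma$. The key technical input is the standard distortion estimate for geometric intersection numbers under iterated Dehn twists: for any essential simple closed curves $c \neq a$ and $d$ on $\Sigma$, and any integer $n$,
\[
\bigl|\, i(\tau_a^n(c), d) \,-\, |n|\, i(a,c)\, i(a,d)\, \bigr| \,\leq\, i(c,d).
\]
This can be proved by putting $c$ and $d$ in minimal position with $a$, doing the twist explicitly along an annular neighborhood of $a$, and counting the contributions of each arc of $c\cap (\Sigma\setminus a)$ to the intersection with $d$.

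The ping-pong sets I would use are
\[
U_a \,=\, \{\, c \in \mathcal{C} \,:\, i(c,a) < i(c,b) \,\}, \qquad U_b \,=\, \{\, c \in \mathcal{C} \,:\, i(c,b) < i(c,a) \,\}.
\]
These are disjoint by construction, and both are nonempty: $a \in U_a$ and $b \in U_b$, since $i(a,b)\geq 2 > 0$.

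Next, I would verify the ping-pong inclusions $\tau_a^n(U_b) \subset U_a$ and $\tau_b^n(U_a) \subset U_b$ for every $n \neq 0$. By the symmetric roles of $a$ and $b$, it suffices to check the second. Since $\tau_b$ fixes $b$ up to isotopy, $i(\tau_b^n(c), b) = i(c,b)$. Applying the distortion estimate with $d = a$ and using $i(a,b) \geq 2$,
\[
i(\tau_b^n(c), a) \,\geq\, |n|\, i(b,c)\, i(a,b) - i(c,a) \,\geq\, 2|n|\, i(b,c) - i(c,a).
\]
For $c \in U_a$, the strict inequality $i(c,a) < i(c,b)$ between nonnegative integers yields $i(c,a) \leq i(c,b) - 1$, whence
\[
i(\tau_b^n(c), a) \,\geq\, (2|n|-1)\, i(b,c) + 1 \,>\, i(b,c) \,=\, i(\tau_b^n(c), b)
\]
for every $n \neq 0$; the strict inequality reduces to $(2|n|-2)\, i(b,c) + 1 > 0$, which holds for all $|n|\geq 1$. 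Hence $\tau_b^n(c) \in U_b$ for every $c \in U_a$ and every $n \neq 0$.

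With the ping-pong hypotheses established, the classical ping-pong lemma gives that $\langle \tau_a, \tau_b \rangle$ is a free group of rank two; both Dehn twists have infinite order, so the conclusion is a rank-two free group rather than a free product of torsion factors. I expect the main obstacle to be controlling the ping-pong inclusions already at $|n|=1$: the weaker hypothesis $i(a,b)=1$ would only give $(|n|-2)\, i(b,c)+1>0$, which fails at $n=\pm 1$. This is consistent with the well-known fact that Dehn twists along curves meeting once satisfy the braid relation and do not generate a free group, so the threshold $i(a,b)\geq 2$ is exactly where the argument becomes tight.
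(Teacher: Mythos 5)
The paper states this result as a black box, citing Hamidi--Tehrani \cite{HamTeh}, and gives no proof of its own, so there is no internal argument to compare against. Your ping-pong proof is correct and is the standard argument for this classical fact (Hamidi--Tehrani's Theorem~3.5 is actually more general, treating multi-twists; the two-curve case you prove goes back to Ishida and Ivanov and appears, with the same distortion estimate, in the Farb--Margalit \emph{Primer}). The choice of ping-pong sets $U_a=\{c: i(c,a)<i(c,b)\}$, $U_b=\{c: i(c,b)<i(c,a)\}$ is the natural one; they are disjoint and nonempty (containing $a$ and $b$ respectively since $i(a,b)\geq 2$), and your arithmetic is right: for $c\in U_a$ one has $i(\tau_b^n(c),a)\geq 2|n|\,i(b,c)-i(c,a)\geq (2|n|-1)\,i(b,c)+1 > i(b,c)=i(\tau_b^n(c),b)$ for all $n\neq 0$, with the hypothesis $i(a,b)\geq 2$ entering precisely at $|n|=1$. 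Both twists have infinite order, so the ping-pong lemma gives a free group of rank two. Your closing remark about the braid relation at $i(a,b)=1$ correctly explains why the threshold is sharp.
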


\smallskip

\section{Dehn filling and cosets of monodromies}
\label{sec:filling}

\subsection{Independance of mapping classes}
\label{sec:independance}
 We start this section by clarifying the notion of independent mapping classes we mentioned in Section \ref{sec:intro}.
\begin{definition}\label{def:independent}Let $\Sigma$ be a compact oriented surface and let  $f,g \in \mathrm{Mod}(\Sigma)$. We say that $f$ and $g$ are independent if there is no $h\in \mathrm{Mod}(\Sigma)$ so that both $f$ and $g$ are conjugated to non-trivial powers of $h.$
\end{definition}

\begin{remark} Definition \ref{def:independent} is motivated by the following observation: if $f$ and $g$ are not independent, then $f$ satisfies the AMU conjecture if and only if $g$ does. Indeed, let $f$ be conjugated to $h^k$ and $g$ to $h^l,$ for some integers $k,l\neq 0.$ For any representation $\rho$ of $\mathrm{Mod}(\Sigma),$ the images $\rho(f)$ and $\rho(g)$ will have infinite order if and only if $\rho(h)$ does.
\end{remark}

With the following lemma we can show the independence of two elements using the volumes of the corresponding mapping tori.
\begin{lemma}\label{lemma:independent} Let $\Sigma$ be a compact oriented surface. For $f \in \mathrm{Mod}(\Sigma)$ let $M_f$ be the mapping torus of $f.$ There is a constant $\epsilon>0$ such that for any pseudo-Anosov elements $f,g \in \mathrm{Mod}(\Sigma),$ if $\vol(M_f)\neq {\vol}(M_g)$ and $|\vol(M_f)-{\vol}(M_g)|<\epsilon$ then $f$ and $g$ are independent.
\end{lemma}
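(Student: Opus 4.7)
I would prove the lemma by proving its contrapositive. Suppose $f,g\in\mathrm{Mod}(\Sigma)$ are pseudo-Anosov and \emph{not} independent, so that there exist $h\in\mathrm{Mod}(\Sigma)$ and non-zero integers $k,l$ with $f$ conjugate to $h^k$ and $g$ conjugate to $h^l$. The goal is then to show that if $\vol(M_f)\neq \vol(M_g)$, then $|\vol(M_f)-\vol(M_g)|\geqslant \epsilon$ for some uniform $\epsilon>0$.

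The first step is to upgrade $h$ to a pseudo-Anosov element. By the Nielsen-Thurston classification, $h$ is periodic, reducible, or pseudo-Anosov; the first two options are preserved under taking non-zero powers (a periodic $h$ yields a periodic $h^k$, and a reduction system for $h$ is also a reduction system for $h^k$), which is incompatible with $h^k$ being conjugate to the pseudo-Anosov $f$. Hence $h$ is pseudo-Anosov, and by Thurston's hyperbolization of fibered 3-manifolds, $M_h$ is hyperbolic. Since conjugate mapping classes have homeomorphic mapping tori, and since $M_{h^n}$ is the $|n|$-fold cyclic cover of $M_h$ dual to the fiber for any non-zero $n$, we obtain
\begin{equation*}
\vol(M_f)=\vol(M_{h^k})=|k|\vol(M_h), \qquad \vol(M_g)=\vol(M_{h^l})=|l|\vol(M_h).
\end{equation*}

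Now suppose moreover that $\vol(M_f)\neq \vol(M_g)$; then $|k|\neq|l|$, so $\bigl||k|-|l|\bigr|\geqslant 1$. This gives
\begin{equation*}
|\vol(M_f)-\vol(M_g)|=\bigl||k|-|l|\bigr|\cdot \vol(M_h)\geqslant \vol(M_h).
\end{equation*}
The final ingredient is a universal positive lower bound on the volume of an orientable hyperbolic 3-manifold: by the Jørgensen-Thurston theory of the volume spectrum (or concretely, the Gabai-Meyerhoff-Milley bound $\approx 0.9427$ in the closed case and the Cao-Meyerhoff bound $\approx 2.0299$ in the cusped case, according as $\Sigma$ is closed or has boundary), there is a constant $v_0>0$, independent of $\Sigma$, such that $\vol(N)\geqslant v_0$ for every orientable hyperbolic 3-manifold $N$. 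Therefore $\vol(M_h)\geqslant v_0$, and the proof concludes by choosing any $\epsilon$ with $0<\epsilon\leqslant v_0$.

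The argument is essentially a combination of bookkeeping with cyclic covers and the universal volume bound, so I do not anticipate a serious obstacle; the only point requiring a little care is the verification that $h$ itself is pseudo-Anosov, which however is immediate from the Nielsen-Thurston classification as indicated.
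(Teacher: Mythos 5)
Your proposal is correct and follows essentially the same route as the paper: conjugacy gives homeomorphic mapping tori, $M_{h^n}$ is an $|n|$-fold cyclic cover of $M_h$, so the two volumes are distinct positive integer multiples of $\vol(M_h)$, and the Jørgensen--Thurston universal lower bound on hyperbolic volumes supplies $\epsilon$. Your write-up is slightly more careful than the paper's on two small points worth noting: you justify why $h$ itself must be pseudo-Anosov (so that $\vol(M_h)$ makes sense), and you correctly write $|k|$, $|l|$ rather than $k$, $l$ in the covering-space volume formula.
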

\begin{proof}
If $f$ is conjugated to $h^k$ and $g$ to $h^l$ for some $h\in \mathrm{Mod}(\Sigma)$ and $k,l\in \Z \setminus \lbrace 0 \rbrace,$ then ${\vol}(M_f)=k{\vol}(M_h)$ and $\vol(M_g)=l\vol(M_h).$ As $M_f$ and $M_g$ have different volumes, their volume differ at least by ${\vol}(M_h).$ By Jorgensen-Thurston \cite{thurston:notes} theory the set of volumes of hyperbolic manifolds is well ordered, thus it has a minimal element.
Thus we can simply take $\epsilon$ to be smaller than the minimal volume.
\end{proof}

We need the following theorem which for $k=1$ is proved by  Long and Morton \cite[Theorem 1.2]{LongMorton}. 

 \begin{theorem} \label{hyperbolic} Let $f\in \mathrm{Mod}(\Sigma)$ is a pseudo-Anosov map and let $\gamma_1,\ldots, \gamma_k$ be homotopically essential, non-boundary 
 parallel curves. Suppose moreover that no pair of the curves are parallel on $\Sigma$ and that   $i( f(\gamma_i), \gamma_j)\neq 0$, for any $1\leqslant i, j\leqslant k$.
 Then, there is $n\in \NN$ such that for all
 $(n_1, \ldots, n_k)\in {\Z}^k$ with all $|n_i|>n,$  the mapping class
 $$g=f\circ \tau^{n_1}_{\gamma_1} \circ \ldots \circ \tau^{n_k}_{\gamma_k}$$ is pseudo-Anosov. Furthermore, the family 
 
 $$\{f \circ \tau^{n_1}_{\gamma_1} \circ  \ldots \circ \tau^{n_k}_{\gamma_k} , \ \ {\rm with} \ \ {(n_1, \ldots, n_k)\in {\Z}^k} \ \ {\rm and } \  \ |n_i|>n\}$$
 contains infinitely many, pairwise  independent pseudo-Anosov mapping classes.
  \end{theorem}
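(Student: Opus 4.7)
The plan is to generalize the Long--Morton argument from $k=1$ to arbitrary $k$ by applying Thurston's hyperbolic Dehn surgery theorem to a suitably drilled mapping torus. Since $f$ is pseudo-Anosov, the mapping torus $M_f$ is hyperbolic. Push each $\gamma_i$ to a distinct fiber level $0<t_1<t_2<\cdots<t_k<1$ to obtain pairwise disjoint simple closed curves $\widetilde{\gamma}_i\subset M_f$ carrying their fiber framings, and let $N:= M_f\setminus \nu(\widetilde{\gamma}_1\cup\cdots\cup\widetilde{\gamma}_k)$.

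The first and main step is to verify that $N$ is hyperbolic. Since $M_f$ is hyperbolic, by Thurston's geometrization of Haken manifolds it suffices to rule out compressing disks, essential spheres, and essential tori or annuli in $N$. The hypothesis $i(f(\gamma_i),\gamma_i)\neq 0$ is exactly the Long--Morton condition ensuring that each $\widetilde{\gamma}_i$ is essential and non-peripheral in $M_f$, so no compressing disks arise from an individual component. The mixed conditions $i(f(\gamma_i),\gamma_j)\neq 0$ for $i\neq j$, together with the assumption that the $\gamma_i$ are pairwise non-parallel on $\Sigma$, forbid any essential annulus in $M_f$ cobounded by $\widetilde{\gamma}_i$ and $\widetilde{\gamma}_j$; in particular the $\widetilde{\gamma}_i$ are pairwise non-isotopic in $M_f$. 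Combined with atoroidality of $M_f$ itself, this gives the hyperbolicity of $N$.

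The second step is the Dehn surgery interpretation recalled in Section \ref{subsec:Stallings}: performing $1/n_i$-surgery on $\widetilde{\gamma}_i$ with respect to the fiber framing preserves the fibered structure and modifies the monodromy by pre- or post-composition with $\tau_{\gamma_i}^{n_i}$. As the $\widetilde{\gamma}_i$ sit at strictly increasing heights, the simultaneous filling with slopes $(1/n_1,\ldots,1/n_k)$ yields the mapping torus of $g = f\circ \tau_{\gamma_1}^{n_1}\circ \cdots \circ \tau_{\gamma_k}^{n_k}$. Thurston's hyperbolic Dehn surgery theorem, applied to the hyperbolic manifold $N$, then produces an integer $n$ such that for all $(n_1,\ldots,n_k)\in\Z^k$ with $|n_i|>n$ the filled manifold is hyperbolic, so $M_g$ is hyperbolic and $g$ is pseudo-Anosov, proving the first assertion.

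For the \emph{furthermore} statement, Thurston's theorem also yields $\vol(M_g)\nearrow \vol(N)$ strictly from below as $\min_i|n_i|\to\infty$, and the volumes take only countably many values accumulating at $\vol(N)$. Taking the constant $\epsilon$ from Lemma \ref{lemma:independent} and selecting an infinite subfamily whose volumes lie in an $\epsilon/2$-neighborhood of $\vol(N)$ and are pairwise distinct, any two such elements have distinct volumes differing by less than $\epsilon$ and are therefore pairwise independent by Lemma \ref{lemma:independent}. The principal technical obstacle is step one: ruling out essential annuli and tori in $N$, which is precisely where the intersection-number hypotheses on all pairs $(i,j)$ are used.
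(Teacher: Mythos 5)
Your outline follows the same architecture as the paper's proof: push the $\gamma_i$ to distinct fiber levels, drill them out of the hyperbolic mapping torus $M_f$ to get $N$, prove $N$ is hyperbolic, identify the filled manifolds with the mapping tori of $f\circ\tau_{\gamma_1}^{n_1}\circ\cdots\circ\tau_{\gamma_k}^{n_k}$, apply Thurston's hyperbolic Dehn filling theorem, and separate volumes using Lemma \ref{lemma:independent}. The second and third steps are essentially identical to the paper's (the paper uses the effective filling estimate of Futer--Kalfagianni--Purcell to make the volume spacing explicit, but Thurston's qualitative statement suffices for the conclusion you draw).

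However, the first step -- verifying that $N$ is atoroidal -- is where the real content lies, and your argument there has a genuine gap. You assert that atoroidality of $M_f$, together with the pairwise non-isotopy of the $\widetilde{\gamma}_i$ in $M_f$ (no annulus cobounded by $\widetilde{\gamma}_i$ and $\widetilde{\gamma}_j$), ``gives the hyperbolicity of $N$.'' This does not follow. An essential torus $T\subset N$ need not come from an essential torus or annulus in $M_f$: it can be compressible in $M_f$, for instance bounding a solid torus $V\subset M_f$ that contains one of the $\delta_i$ as a non-core knot, or contains several of the $\delta_i$ at once. Such a $T$ is invisible to atoroidality of $M_f$ and to pairwise non-isotopy of the drilled curves, yet it would obstruct hyperbolicity of $N$. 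Ruling this out is exactly the hard part, and it is where the hypotheses $i(f(\gamma_i),\gamma_j)\neq 0$ for all $i,j$ and the growth of intersection numbers under pseudo-Anosov iteration (Proposition \ref{diameter}, Remark \ref{morediameter}) actually enter: one must isotope $T$ to meet the fibers transversally, decompose $T\cap M_F$ into annuli that are either vertical in the $I$-bundle $M_F=M_f\cut F$ or double back around some $\delta_j$, and then show that the intersection hypotheses force $T$ to wrap around at most one $\delta_j$ (hence be boundary-parallel), while the pseudo-Anosov growth rules out a vertical annulus closing up into a torus. Relatedly, you misattribute the role of $i(f(\gamma_i),\gamma_i)\neq 0$: the curves $\widetilde{\gamma}_i$ are essential and non-peripheral in $M_f$ already because each $\gamma_i$ is essential and non-boundary-parallel on the incompressible fiber $\Sigma$; the self-intersection hypothesis is not used to get non-peripherality but is consumed inside the same annulus analysis. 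Without carrying out this analysis, the claim that $N$ is hyperbolic is unsupported, and the rest of the proof hangs on it.
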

  The argument we give below follows the line of the proof given 
in \cite{LongMorton}, however for $k>1$ we need to add hypotheses in order to assure hyperbolicity of the link complement $M_f \setminus \left( \gamma_1 \cup \ldots \cup \gamma_k \right).$ The statement for $k>1$ is used  in the proof of  Theorem \ref{thm:coset} in Section \ref{sec:stabilization}.
  \begin{proof} Consider the mapping torus
 $$M_{f}=\Sigma \times [0,1]/_{(x,1)\sim ({ {f}(x),0)}},$$
 which is hyperbolic since $f$ is  pseudo-Anosov \cite{thurston:notes}.
 Now take numbers  $1/2=i_1 < i_2< \ldots  <i_k <1$ and consider the curves $\gamma_1,  \ldots,   \gamma_k$ lying on the fiber 
 $F:= \Sigma \times \{ i_1\}\subset M_{f}$.
 For $s=2, \ldots, k$, consider the annulus $\gamma_s \times [i_1, \  i_s]$ from $\gamma_s$ on $F$ to  $\delta_s:=\gamma_s \times \{ i_s \}$ which is a curve lying on the fiber
 $F_{i_s}:= \Sigma \times \{i_s \}\subset M_{f}$.  Let $\delta_1:=\gamma_1$.
 Now $L=\delta_1\cup \ldots  \cup  \delta_k$ is a link in $M_{f}$. 
 \vskip 0.07in
 
\noindent  {\bf Claim.} The manifold $N:=M_{h}\setminus L$ is hyperbolic.
 
  \vskip 0.07in

 \noindent  {\bf Proof of Claim.}  Since the curves $\gamma_1,\ldots, \gamma_k$ are homotopically essential and $M_{f}$ is hyperbolic,  the manifold $N$ is irreducible and boundary irreducible. We need to show that $N$ contains no essential embedded tori. Let $T$ be a torus embedded in $N$.
If $T$ is boundary parallel in $M_{f}$ it will also be in $N$ otherwise one would be able to isotope some of the $\gamma_i$ to be boundary parallel in $\Sigma$ contradicting our assumptions.
The torus $T$ must intersect some of the fibers where the curves $\delta_i$ lie since otherwise it would compress in the handlebody outside of those fibers.  
Suppose, without loss of generality, it intersects
 $F$. Then after isotopy, $F\cap T$ is a collection of  simple closed curves that are homotopically essential in $F\setminus \delta_1$.
The intersection of $T$ with the handlebody $M_F$ is a collection of properly embedded annuli in the handlebody $M_F:=M_{f}\cut F$.
After isotopy every annulus in  $M_F$ is either vertical with respect to the $I$-product or it has both its boundary components on the same copy of $F$ on $\partial M_F$
in which case the components are parallel on $F$. Annuli of the later type can be eliminated by isotopy in $N$ unless  they intersect some fiber $F_{i_s}$ containing a curve $\delta_s$, in which case
the intersection of the annulus with $F_s$ is two parallel curves running on opposite sides of $\delta_s$. 

Suppose now that $A$ is a component of $T\cap M_F$ that is vertical with respect to the $I$-product of $M_F$
and let $\theta$ denote a component of $F\cap T$.  There are two cases to consider.
\vskip  0.06in

{\it Case 1.} The vertical annulus  $A$ connects to an annulus $A'$ of $M_F\cap T$ such that the two components of $\partial A'$ run  parallel to some $\delta_j$ on opposite sides of $\delta_j$ on the fiber that contains it. The vertical annulus runs from $\theta,$ which must be parallel to $\delta_j$ now, to $f(\theta)$ on the other copy of the fiber on $\partial M_F$.
Since we assumed that the original curves are pairwise non-parallel on $\Sigma$,  and that  the intersection number  $f(\gamma_i)$ and $\gamma_j$, is non-zero for any $1\leqslant i, j\leqslant k$, the curve $\theta$ cannot be parallel to any of the curves $\delta_1, \ldots \delta_s$. 
It follows that the torus $T$ cannot contain a second annulus whose boundary curves run parallel to some $\delta_s\neq \delta_j$
on a fiber. Thus $T$ must be boundary parallel in $N$.

\vskip  0.06in

{\it Case 2.}  The vertical annulus  $A$ eventually connects to the curve $\theta$.
Since  $f$ is pseudo-Anosov, and $\theta$ is essential on $F$,  we cannot have
$f^{k}(\theta)=\theta$, for any integer $k\neq 0$. In fact it is known that the intersection number of $\theta$ with  $f^{k}(\theta)$ grows linearly in $k$. See Proposition \ref{diameter}
and Remark \ref{morediameter} below for the precise statements and references.
It follows that such an annulus  cannot close to become a torus in $M_f$.
 Thus, this case will not happen.  This implies that $N$ is atoroidal. 
  \vskip  0.06in

Now an atoroidal manifold, with toroidal boundary, can only contain essential annuli if it is a Seifert fibered manifold. But since $M_f$ is hyperbolic it has non-zero Gromov norm and 
 since this norm doesn't increase under filling the cusps with tori, we conclude that
  $||N||\geqslant ||M_{f}||>0$, which implies that
  it cannot be a Seifert fibered space. Thus $N$ cannot have essential annuli and thus, by Thurston's work $N$ is hyperbolic .
  \vskip 0.05in

 To continue we use the fact that  the mapping torus of $g=f \circ \tau^{n_1}_{\gamma_1} \circ  \tau^{n_k}_{\gamma_k}$
 is obtained by $N$ by doing Dehn filling along slopes say $s_1, \ldots, s_k$, respectively. The length of the slope $s_i$ on the corresponding cusp of $N$ is an increasing function of $|n_j|.$ 
Let $\lambda$ denote the length the shortest slope.
By Thurston's hyperbolic Dehn Filling theorem \cite{thurston:notes} there is $n\in \NN$ such that for all
 ${\bf n}:=(n_1, \ldots, n_k)\in {\Z}^k$ with all $|n_i|>n$, the resulting manifold $N({\bf n})$ obtained by filling $N$, where the cusp corresponding to $\delta_i$ is filled as above, is hyperbolic. The volumes of the filled manifolds $N({\bf n})$ approach the volume of $N$ from below.
 To make things more concrete we use an effective form of the theorem proved in \cite[Theorem 1.1]{fkp:filling}, which states that provided that $\lambda >2\pi$, then  $N({\bf n})$
 is hyperbolic and we have

 $$ \left(1-\left(\frac{2\pi}{\lambda}\right)^2\right)^{3/2}\vol(N)  \leqslant\ \vol(N({\bf n})) )  \leqslant\  \vol(N).$$
 
 Using this it becomes clear than one can find infinite sequences of $k$-tuples $\{{\bf n}_i \}_{i\in\NN}$ such that
 the manifolds $N({\bf n}_i)$ have strictly increasing volumes and  such that for any $i\neq j \in \NN$ we have $|\vol(N({\bf n}_j))-\vol(N({\bf n}_i))|< \epsilon$,
 where $\epsilon$ is the constant of Lemma \ref{lemma:independent}.
 Now the monodromies of the manifolds $N({\bf n}_i)$  above form an infinite sequence of independent pseudo-Anosov mapping classes. \end{proof} 
\smallskip

 Note that if $k=1$
in the statement of Theorem \ref{hyperbolic}, then in fact Fathi has shown that there can be at most seven integers $n_1\in \Z$ for which $f \circ \tau^{n_1}$ is not pseudo-Anosov.
We also note that for the following corollary, which is  \cite[Theorem 1.3]{LongMorton},
we do not need the extra hypotheses on the curves $\gamma_i$ given in the statement of Theorem \ref{hyperbolic}.

\begin{corollary}\label{more} Let $f\in \mathrm{Mod}(\Sigma)$ be a pseudo-Anosov map and let $\gamma_1,\ldots, \gamma_k$ be homotopically essential, non-boundary 
 parallel curves. Then, for any $k>0$, the family of maps  $g=f\circ \tau^{n_1}_{\gamma_1} \circ \ldots \circ \tau^{n_k}_{\gamma_k}$, with $(n_1, \ldots, n_k)\in {\Z-\{0\}}^k$,
contains infinitely many  pseudo-Anosov elements.
\end{corollary}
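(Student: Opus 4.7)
My plan is to prove Corollary \ref{more} by induction on $k$, at each stage invoking only the $k=1$ Long--Morton result (and Fathi's sharper form) cited just after Theorem \ref{hyperbolic}. This avoids reproducing the hyperbolicity-of-link-complement argument of Theorem \ref{hyperbolic} and lets one drop the non-parallelism and nonzero intersection-number hypotheses that were needed there.

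The base case $k=1$ is exactly Long--Morton: since $f$ is pseudo-Anosov and $\gamma_1$ is essential and non-boundary-parallel, all but finitely many $n_1 \in \Z$ make $f \circ \tau^{n_1}_{\gamma_1}$ pseudo-Anosov, which in particular yields infinitely many examples with $n_1 \in \Z \setminus \{0\}$.

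For the inductive step, assume the corollary for $k-1$ curves. Applied to $f$ and $\gamma_1, \ldots, \gamma_{k-1}$, it produces in particular one tuple $(m_1, \ldots, m_{k-1}) \in (\Z \setminus \{0\})^{k-1}$ for which
\[
g' \;:=\; f \circ \tau^{m_1}_{\gamma_1} \circ \cdots \circ \tau^{m_{k-1}}_{\gamma_{k-1}}
\]
is pseudo-Anosov. Applying the $k=1$ statement once more to the pseudo-Anosov map $g'$ and the essential, non-boundary-parallel curve $\gamma_k$, one concludes that for all but finitely many $n_k \in \Z$ the map $g' \circ \tau^{n_k}_{\gamma_k}$ is pseudo-Anosov. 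Since $\tau_{\gamma_k}$ has infinite order in $\mathrm{Mod}(\Sigma)$, these maps are pairwise distinct elements as $n_k$ varies over $\Z \setminus \{0\}$ outside a finite set, so we obtain infinitely many pseudo-Anosov compositions of the required form.

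I do not expect a real obstacle: the whole argument rests on iterating the $k=1$ Long--Morton/Fathi theorem, whose only standing hypotheses (an essential non-peripheral curve and a pseudo-Anosov outer map) are preserved through the induction. The price compared to Theorem \ref{hyperbolic} is that one loses control over pairwise \emph{independence} of the resulting pseudo-Anosov classes, which is precisely what the extra geometric hypotheses on the $\gamma_i$ together with the hyperbolic Dehn-filling argument were buying in that stronger statement.
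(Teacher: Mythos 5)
Your proposal is correct and follows essentially the same route as the paper: both arguments iterate the $k=1$ Long--Morton theorem, at each stage applying it to the pseudo-Anosov output of the previous stage together with one more curve. If anything your version is slightly more careful, since you invoke the original $k=1$ Long--Morton/Fathi statement directly rather than citing Theorem \ref{hyperbolic} as the paper does (whose $k=1$ specialization would formally carry the extraneous hypothesis $i(f(\gamma_1),\gamma_1)\neq 0$, which Corollary \ref{more} does not assume).
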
 
\begin{proof}  By Theorem \ref{hyperbolic}, the family of maps $\{f\circ \tau^{n_1}_{\gamma_1} \}_{n_1\in \NN}$, contains infinitely many pseudo-Anosov elements.
Again by  Theorem \ref{hyperbolic}, for each of these pseudo-Anosov elements, the family $\{f\circ \tau^{n_1}_{\gamma_1}\circ  \tau^{n_2}_{\gamma_2}\}_{n_2\in \NN}$ contains infinitely many pseudo-Anosov elements.
That is given $m_1\geq n_1$, there is some $m_2$, depending on $m_1$, such that for any $|n_2|>m_2$,  the map $f\circ \tau^{m_1}_{\gamma_1}\circ  \tau^{n_2}_{\gamma_2}$
is pseudo-Anosov. Continuing inductively we arrive at the desired conclusion.
\end{proof}

\smallskip

\subsection{Abelian and free elementary cosets} To prove Theorems \ref{thm:fig8example} and \ref{thm:coset}, we will construct cosets of abelian and free subgroups of $\mathrm{Mod}(\Sigma)$ generated by Dehn twists on curves as in Section \ref{sec:prelim}. To facilitate clarity of exposition let us make the following definition.
\begin{definition}  Let  $\Sigma$  a compact oriented surface and $f \in \mathrm{Mod}(\Sigma).$
\begin{itemize}
\item If $H$  is a subgroup of $\mathrm{Mod}(\Sigma)$ generated by $k$ (powers of)  Dehn twists along disjoint curves on $\Sigma$, we say that $fH$ is a  \textit{ rank $k$ abelian elementary coset}.
\item If $H$ is a rank two free subgroup of $\mathrm{Mod}(\Sigma)$ generated by (powers of)  Dehn twists  along two curves on $\Sigma$
we say that $fH$ is a {\textit  {free elementary coset}}.
\end{itemize}
Furthermore, in both cases, if $fH$ contains a pseudo-Anosov element, we say that $fH$ is a \textit{pseudo-Anosov elementary coset}.
\end{definition}

An immediate corollary of Theorem \ref{hyperbolic} is the following.

\begin{corollary} \label{infinitelymany} Any pseudo-Anosov abelian or free elementary  coset  of  $\mathrm{Mod}(\Sigma)$
 contains infinitely many
independent pseudo-Anosov  mapping classes.  
\end{corollary}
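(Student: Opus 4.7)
The plan is to reduce both cases to the $k=1$ instance of Theorem \ref{hyperbolic} by restricting attention to a single Dehn twist subfactor of $H$. Since $fH$ is by hypothesis a pseudo-Anosov elementary coset, I would first replace $f$ by a pseudo-Anosov element of the coset; this does not alter $fH$, so we may assume henceforth that $f$ itself is pseudo-Anosov.

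Next I would single out one convenient curve $\gamma$ supporting a twist in $H$. In the abelian case $H=\langle \tau_{\gamma_1}^{p_1},\ldots,\tau_{\gamma_k}^{p_k}\rangle$ I take $\gamma=\gamma_1$, and in the free case $H=\langle \tau_a^{p},\tau_b^{q}\rangle$ I take $\gamma=a$. The Hamidi--Tehrani hypothesis $i(a,b)\geqslant 2$ forces $a$ to be essential and non-boundary-parallel, so in both situations $\gamma$ is essential and non-boundary-parallel, and writing $p$ for the corresponding exponent the cyclic subgroup $\langle \tau_\gamma^{p}\rangle$ sits inside $H$.

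Now I would apply Theorem \ref{hyperbolic} with $k=1$ to the pseudo-Anosov $f$ and the single curve $\gamma$. The theorem supplies an integer $n_0$ such that $f\tau_\gamma^n$ is pseudo-Anosov for every $|n|>n_0$, and produces infinitely many pairwise independent mapping classes within this one-parameter family. Restricting to the arithmetic subprogression $\{f\tau_\gamma^{pm}\}_{|pm|>n_0}$ still retains infinitely many independent pseudo-Anosov elements, and each of them lies in $f\langle \tau_\gamma^{p}\rangle\subset fH$, which yields the corollary.

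The subtle point I would highlight is the reason for descending to a single curve rather than exploiting all generators of $H$ at once. In the abelian case the image $f(\gamma_i)$ is a coset invariant, since every element of $H$ fixes each $\gamma_j$ setwise; consequently the intersection hypothesis $i(f(\gamma_i),\gamma_j)\neq 0$ required by Theorem \ref{hyperbolic} for $k>1$ cannot be arranged by changing the representative of $fH$, and there is no direct way to apply the full theorem to the coset. Reducing to a single curve bypasses this obstruction, because the $k=1$ case of Theorem \ref{hyperbolic} (the Long--Morton theorem) needs no intersection hypothesis, exactly as it was used implicitly in the proof of Corollary \ref{more}.
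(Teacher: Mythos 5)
Your approach is identical to the paper's: replace the coset representative by a pseudo-Anosov element, isolate a single curve $\gamma$ whose twist (or power of a twist) lies in $H$, and invoke Theorem~\ref{hyperbolic} with $k=1$. Your remark about the arithmetic subprogression $\{f\tau_\gamma^{pm}\}$ makes explicit a small point the paper leaves implicit — namely that the independence claim of Theorem~\ref{hyperbolic} survives restriction to any infinite subset of exponents tending to infinity, since its proof produces independence via Dehn-filling volumes that increase toward $\vol(N)$ as $|n|\to\infty$ — but this does not constitute a different route.
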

\begin{proof} Given a pseudo-Anosov  abelian or free elementary coset  $gH$ of $\mathrm{Mod}(\Sigma),$
we can write $fH=gH$, where $f$ is a pseudo-Anosov mapping class. Recall that in both cases, $H$ contains a free abelian group generated by a Dehn twist $\tau_{\gamma}$ on
a simple closed non non-peripheral curve $\gamma$.
 We can look at mapping classes of the form $f\circ \tau_{\gamma}^n$ and apply
 Theorem \ref{hyperbolic}.
\end{proof}

\smallskip


\smallskip
\section{ Abelian and free elementary cosets from 3-manifolds}
\label{sec:stabilization}
The main results in this section are Theorem \ref{prop:stab_abelian}  and Theorem \ref{prop:stab_free}
which imply Theorem \ref{thm:coset}  and Corollary \ref{apAMU} stated in the Introduction.
To prove these results,
first we show that any open book decomposition  of a 3-manifold, with pseudo-Anosov monodromy,   can be stabilized to one with  pseudo-Anosov monodromy and so that the pages of the decomposition  support   Stallings twists. See Theorem
\ref{stabilize} and subsequent discussion. Roughly speaking, the key point is to choose stabilization arcs on the fiber surface  that are parts of curves that are ``complicated" in the sense of the geometry of the  curve complex
$\Sigma_{g,1}$. See Lemma \ref{Pennerlarge} and Propositions \ref{oneboundary} and \ref{twoboundary}. 
To prove Theorems  \ref{prop:stab_abelian}  and  \ref{prop:stab_free} we combine Theorem
\ref{stabilize} with
Theorems \ref{hyperbolic} and \ref{Hamidi-Tehrani}, respectively.

\subsection{Moves on open book decompositions} 
\label{sec:moves_open_book}
An open book decomposition $(\Sigma,h)$ of $M$ can be modified into another open book decomposition of $M$ by some elementary moves called stabilizations and destabilizations:

\begin{definition}\label{def:stabilization} Let $(\Sigma,h)$ be an open book decomposition of  $M,$
and  $(\gamma, \partial \gamma) \subset ( \Sigma, \partial \Sigma)$ a properly embedded arc. We will say that the surface $\Sigma'$ is obtained by a  \textit{positive} (resp.  \textit{negative})  \textit{stabilization} along
$\gamma$, if $\Sigma'$ is obtained from $\Sigma$ by plumbing a positive (resp. negative) Hopf band $H$ (resp. $H^{-1}$) along $\gamma$.  The intersection of the $H^{\pm1 }$ with $\Sigma$ is a neighborhood of
$\gamma$ and the arc $\gamma$ becomes part of the core curve, say $c$, of $H^{\pm1 }$. The curve $c$  bounds a disk in $M,$ whose intersection with $\Sigma$ is exactly the arc $\gamma.$

Let  $\tau_{c}$ denote the corresponding Dehn twist on $c\subset \Sigma'$ and let $h$ be extended to $\Sigma'$ by the identity on $\Sigma'\setminus \Sigma$. 
Then the pair $(\Sigma',h')$ where $h'=h \circ \tau_{c}^{\pm 1}$ is an open book decomposition of $M$ which we call a \textit{stabilization} of $(\Sigma,h).$ 

Finally, $(\Sigma,h)$ is a \textit{destabilization} of $(\Sigma',h').$
\end{definition}

Harer proved \cite{Har82} that any two open book decompositions of the same $3$-manifold $M$ are related by stabilizations, destabilizations and a third elementary move called a double twist move. Harer conjectured that stabilizations and destabilizations are sufficient, which was later proved by Giroux and Goodman \cite{Giroux:open_book}.
 Despite Giroux and Goodman's result, it is in practice difficult to explicitly express a double twist move in terms of stabilizations and destabilizations. Some particular examples of double twist moves are the Stallings twists, which we already introduced in Definition \ref{def:Stallingstwist}.
\subsection{The curve graph and Penner systems} A compact oriented surface $\Sigma=\Sigma_{g,n}$ is called non-sporadic if $3g-3+n>0$. Given a non-sporadic surface $\Sigma$, the \textit{curve graph}
\label{subsec:curves}
  ${\mathcal C}_1(\Sigma)$  is defined as follows:
  
The set of vertices, denoted by   ${\mathcal C}_0(\Sigma)$, is the set of isotopy classes of  homotopically essential, non-boundary parallel simple closed curves and two vertices
   $a, b\in {\mathcal C}_0(\Sigma)$ are connected by an edge if they can be represented by disjoint curves, that is they have intersection number zero ($i(a, b)=0$). The space  ${\mathcal C}_1(\Sigma)$
   becomes a geodesic metric space with the path metric that assigns length 1 to each edge of the graph, and elements of the mapping class of $\Sigma$ act as isometries on the space.
  Now for the \textit{curve complex} ${\mathcal C}(\Sigma)$ one adds to ${\mathcal C}_0(\Sigma)$ cells to collections of disjoint curves on $\Sigma$.

For   $a, b\in {\mathcal C}_0(\Sigma)$  the distance between $a,b$ will be denoted by $d(a,\  b)$.
The geometry of the space was studied by Masur and Minsky \cite{MaMi}.

\begin{lemma} \label{intersection}  \cite[Lemma 2.1]{MaMi} Given  $a, b\in {\mathcal C}_0(\Sigma)$ we have $d(a, \ b) \leqslant 2 i(a, \ b)+1$
\end{lemma}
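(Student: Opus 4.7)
The plan is to proceed by induction on the intersection number $n = i(a,b)$.

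The base case $n = 0$ is immediate: the curves $a$ and $b$ can be realized disjointly and so are joined by an edge in ${\mathcal C}_1(\Sigma)$, giving $d(a,b) \leqslant 1 = 2 \cdot 0 + 1$.

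For the inductive step, I would assume $n \geqslant 1$ and the inequality for all pairs of curves with strictly smaller intersection number. Pick two points $p,q$ of $a \cap b$ that are consecutive along $b$, and let $\beta$ be the arc of $b$ joining them whose interior meets $a$ nowhere; write $\alpha_1, \alpha_2$ for the two arcs into which $p$ and $q$ divide $a$. After a small smoothing of corners and a push off $a$, the two concatenations $c_1 = \alpha_1 \cup \beta$ and $c_2 = \alpha_2 \cup \beta$ become simple closed curves on $\Sigma$ that are disjoint from $a$. A standard argument, using that $a$ is essential and non-peripheral together with the fact that $\Sigma$ is non-sporadic, shows that at least one of $c_1, c_2$, call it $c$, is itself essential and non-peripheral, and so represents a vertex of ${\mathcal C}_0(\Sigma)$.

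Since $c$ is disjoint from $a$, we have $d(a,c) \leqslant 1$. Each intersection of $c$ with $b$ lies on one of the arcs $\alpha_i \subset a$, since $\beta \subset b$ contributes no intersections; moreover the contributions of $p$ and $q$ disappear after pushing $c$ off $a$. Hence $i(c,b) \leqslant n - 2$, and the inductive hypothesis yields $d(c,b) \leqslant 2(n-2) + 1$. The triangle inequality then gives
$$
d(a,b) \;\leqslant\; d(a,c) + d(c,b) \;\leqslant\; 1 + 2(n-2) + 1 \;=\; 2n - 2 \;\leqslant\; 2n + 1,
$$
as required. The very small intersection cases, in particular $n = 1$ where $p = q$ and $\beta$ is all of $b$, are handled directly: the regular neighborhood of $a \cup b$ is a one-holed torus whose single boundary component is essential, disjoint from both $a$ and $b$, and yields $d(a,b) \leqslant 2$.

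The main obstacle is the claim that at least one of $c_1, c_2$ is essential and non-peripheral. This must be verified by a case analysis on the topology of a regular neighborhood of $a \cup \beta$, which is either a pair of pants or a one-holed torus depending on how $\beta$ approaches $a$ at its endpoints; the alternative that both $c_i$ bound disks or are peripheral can be ruled out using that $a$ is essential and that $\Sigma$ is non-sporadic.
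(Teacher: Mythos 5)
The paper gives no proof here --- the lemma is cited directly from Masur--Minsky --- so your proposal can only be measured against the standard surgery argument, whose outline it does follow. The gap is in the disjointness claim. You assert that after smoothing corners and pushing off $a$, both $c_1 = \alpha_1 \cup \beta$ and $c_2 = \alpha_2 \cup \beta$ become simple closed curves disjoint from $a$, hence $d(a,c_i) \leqslant 1$. This holds only when $\beta$ leaves $a$ at $p$ and returns at $q$ on the \emph{same} side of $a$, in which case $N(a \cup \beta)$ is a pair of pants whose three boundary components are a parallel copy of $a$, $c_1$, and $c_2$. When $\beta$ approaches $a$ from \emph{opposite} sides at its two endpoints, $N(a \cup \beta)$ is a one-holed torus, and $\alpha_i \cup \beta$ is forced to cross $a$ exactly once transversally no matter how you push off: pushing $\alpha_i$ to one side of $a$ matches the side of $\beta$ at one endpoint but not at the other, and a single transverse crossing of two simple closed curves cannot be removed since the mod-$2$ intersection number is $1$. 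This opposite-sides configuration cannot be avoided by choosing the arc $\beta$ more cleverly; it occurs at \emph{every} arc of $b$ whenever all crossings of $a$ with $b$ carry the same algebraic sign. You do notice the one-holed-torus possibility, but only while verifying essentiality of the $c_i$, not disjointness from $a$, and the latter is the step that actually breaks.

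The standard fix is a short sub-lemma: if $i(a,c) = 1$ on a non-sporadic surface, then $\partial N(a\cup c)$ is a single curve disjoint from both $a$ and $c$ and is essential and non-peripheral (otherwise $\Sigma$ would be a closed or a one-holed torus), so $d(a,c) \leqslant 2$. Applying this to $c = \alpha_1 \cup \beta$ in the opposite-sides case, one still has $i(c,b) \leqslant n-2$, and the induction gives $d(a,b) \leqslant 2 + (2(n-2)+1) = 2n - 1$, comfortably within the stated bound. Without a correction of this kind, the step $d(a, c_i) \leqslant 1$ is false in a case that genuinely arises, and the induction does not close.
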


\begin{proposition} \label{diameter} {\rm \cite[Proposition 4.6]{MaMi}} For any non-sporadic surface $\Sigma$ there is a constant $C>0$ such that given a pseudo-Anosov
$h\in   \mathrm{Mod}(\Sigma)$ we have 
$$d(a, h^n(a)) \geqslant C |n|,$$
for every $a \in {\mathcal C}_0(\Sigma)$ and $n\in \Z$.
\end{proposition}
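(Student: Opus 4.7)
The plan is to use Gromov hyperbolicity of the curve graph combined with the dynamics of the pseudo-Anosov action. First, I would invoke the Masur-Minsky theorem that $\mathcal{C}_1(\Sigma)$ is $\delta$-hyperbolic with $\delta = \delta(\Sigma)$. This sets the stage: in a $\delta$-hyperbolic space, any isometry is elliptic, parabolic, or loxodromic, and only loxodromic isometries have positive stable translation length $\tau(h) := \lim_n \frac{1}{n} d(a, h^n(a))$.

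Next I would prove that every pseudo-Anosov $h$ is loxodromic on $\mathcal{C}_1(\Sigma)$. One route is to appeal to Klarreich's identification of $\partial \mathcal{C}(\Sigma)$ with the space of filling ending laminations: the pair of invariant filling measured laminations $\mathcal{L}^{\pm}$ of $h$ give two distinct fixed points in $\partial \mathcal{C}(\Sigma)$ with North-South dynamics, forcing $h$ to be loxodromic. A more elementary route: if $h$ were elliptic or parabolic, all iterates $h^n(a)$ would lie in a bounded region of $\mathcal{C}_1(\Sigma)$, so by Lemma \ref{intersection} the intersection numbers $i(a, h^n(a))$ would stay bounded, contradicting the exponential growth of $i(a, h^n(a))$ under pseudo-Anosov iteration. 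Once $\tau(h) > 0$ is established, standard hyperbolic geometry makes the orbit $\{h^n(a)\}_{n \in \mathbb{Z}}$ a quasi-geodesic, yielding $d(a, h^n(a)) \geqslant \tau(h)|n| - O(\delta)$. The clean bound $d(a, h^n(a)) \geqslant C|n|$ then follows by slightly shrinking the multiplicative constant and using $d(a, h^n(a)) \geqslant 1$ for $n \neq 0$ (automatic since pseudo-Anosovs fix no curve) to absorb the additive error for small $|n|$.

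The main obstacle is the uniformity of $C$ in $h$: the argument above yields a constant of the form $C(h, \Sigma)$, whereas the statement demands $C = C(\Sigma)$. To get uniformity I would appeal to Masur-Minsky's subsurface projection machinery: if $\tau(h)$ could be made arbitrarily small, one could find $a$ and large $n$ with small $d(a, h^n(a))$, which via the distance formula would force the subsurface projections of $a$ and $h^n(a)$ to be uniformly close on every proper subsurface, contradicting the unboundedness of subsurface projections under pseudo-Anosov iteration. Alternatively, one can combine uniform hyperbolicity of curve graphs (Aougab; Bowditch; Clay-Rafi-Schleimer; Hensel-Przytycki-Webb) with the WPD property of pseudo-Anosov actions to extract a $\Sigma$-uniform lower bound on $\tau(h)$, which is the hardest input and the crux of the argument.
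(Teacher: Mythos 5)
The paper does not prove this proposition --- it is cited directly from Masur--Minsky \cite{MaMi} --- so there is no in-paper proof to compare with. Taking your proposal on its own terms: it is a reasonable modern alternative to the Masur--Minsky original, which instead goes through their train-track and contraction machinery. Your loxodromicity step is sound (either Klarreich's boundary description or the elementary intersection-growth argument works), and you correctly identify the uniformity of $C$ as the crux.

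However, both of your suggested routes to uniformity have problems. The first (distance formula and subsurface projections) is flawed: the Masur--Minsky distance formula expresses \emph{marking-complex} distance as a sum of subsurface-projection terms, of which $d_{\mathcal{C}(\Sigma)}$ is only the top-level ($Y=\Sigma$) term, so a small $d_{\mathcal{C}(\Sigma)}(a,h^n(a))$ imposes no constraint on projections to proper subsurfaces. Worse, for a pseudo-Anosov the projections along the axis to any fixed proper subsurface stay \emph{bounded} (this is the contraction property), so there is no unboundedness to contradict --- the quantity that grows linearly is $d_{\mathcal{C}(\Sigma)}$ itself, making this route circular. The second route is nearly right but cites the wrong property: WPD (Bestvina--Fujiwara) is a per-element condition and does not by itself give a $\Sigma$-uniform lower bound on $\tau(h)$. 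The correct input is Bowditch's theorem that $\mathrm{Mod}(\Sigma)$ acts acylindrically on $\mathcal{C}(\Sigma)$; acylindricity of a group action on a hyperbolic space does yield a uniform positive lower bound on stable translation lengths of loxodromics and cleanly implies Proposition \ref{diameter}. Finally, uniform hyperbolicity of curve graphs across all surfaces is overkill here since the constant $C$ is allowed to depend on $\Sigma$.
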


\begin{remark}\label{morediameter} Proposition \ref{diameter} implies that, if $h$ is pseudo-Anosov  and we have  $h^k(a)=a$, for some $a\in {\mathcal C}_0(\Sigma)$, then $k=0$. For, if $ k \neq 0$, then we  have infinitely many  $n\in \Z$
such $h^n(a)=a$, namely all the multiples of $k$.
Thus we  have $d(a, \ h^n(a)) \leqslant 1$, for infinitely many  $ n\in \Z$. On the other hand, by Proposition  \ref{diameter} we have $d(a,  h^n(a)) \geqslant C |n|,$ which is a contradiction.
\end{remark}

We will need the following lemma.

\begin{lemma} \label{fill} Let $a, b\in {\mathcal C}_0(\Sigma)$  and  let $h\in   \mathrm{Mod}(\Sigma)$ be pseudo-Anosov. Suppose that  
we have $d(a, h(a)) \geqslant N_0$ and $d(b,h(b)) \geqslant N_0$, for some $N_0\gg2i(a, \ b) +1.$ Then, we have $h(a)\neq b$.
\end{lemma}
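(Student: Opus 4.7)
The plan is to argue by contradiction using Masur--Minsky's linear bound $d(a,b)\leqslant 2i(a,b)+1$ recorded in Lemma \ref{intersection}. Assume for contradiction that $h(a)=b$. Then the distance $d(a,h(a))$ is literally equal to $d(a,b)$, since these are distances between the same pair of vertices of $\mathcal{C}_1(\Sigma)$. Applying Lemma \ref{intersection} immediately gives $d(a,h(a))=d(a,b)\leqslant 2i(a,b)+1$, which contradicts the standing hypothesis $d(a,h(a))\geqslant N_0 \gg 2i(a,b)+1$.

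So the entire proof reduces to one line, and in fact only the hypothesis $d(a,h(a))\geqslant N_0$ is actually used; the symmetric condition $d(b,h(b))\geqslant N_0$ appears to be stated for bookkeeping purposes consistent with how the lemma will be invoked later (presumably one wants to rule out both $h(a)=b$ and $h(b)=a$ in a single statement, the latter being handled by the same argument with the roles of $a$ and $b$ swapped and Lemma \ref{intersection} applied to $d(b,a)=d(a,b)$). The pseudo-Anosov hypothesis on $h$ is similarly not logically needed for this direct contradiction, though presumably it is included because the intended downstream applications (for instance, using Proposition \ref{diameter} to produce curves with $d(a,h(a))$ arbitrarily large) require it.

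The main obstacle, insofar as there is one, is purely a matter of being careful that the quantifier ``$N_0 \gg 2i(a,b)+1$'' is interpreted in a way that guarantees strict inequality $N_0 > 2i(a,b)+1$; with that reading the argument goes through without any further ingredients.
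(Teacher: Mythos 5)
Your proof is correct, and it is slightly cleaner than the paper's. The paper proves $h(a)\neq b$ directly by a triangle-inequality chain
$d(b,h(a)) \geqslant d(b,h(b)) - d(h(b),h(a)) \geqslant d(b,h(b)) - d(a,b) \geqslant N_0 - (2i(a,b)+1) > 0$,
which uses the hypothesis $d(b,h(b))\geqslant N_0$ together with the fact that $h$ acts as an isometry on $\mathcal{C}_1(\Sigma)$. You argue by contradiction: if $h(a)=b$ then $d(a,h(a))=d(a,b)\leqslant 2i(a,b)+1$ by Lemma~\ref{intersection}, contradicting $d(a,h(a))\geqslant N_0$. Your version uses the other hypothesis $d(a,h(a))\geqslant N_0$, and dispenses even with the isometry step; so in fact each of the two distance hypotheses alone suffices (the paper uses one, you use the other), and your observations that the pseudo-Anosov hypothesis is not logically needed and that the lemma is symmetric under swapping $a$ and $b$ (ruling out $h(b)=a$ as well, which is how it gets invoked in the proof of Theorem~\ref{prop:stab_abelian}) are both accurate and worth noting. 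Either argument is adequate; yours is the more economical of the two.
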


\begin{proof} We have 
\begin{eqnarray*}d(b,h(a)) & \geqslant &d(b,h(b))-d(h(b),h(a))
\\ & \geqslant & d(b,h(b))-d(a,b)
\\ & \geqslant & N_0-(2i(a,b)+1) \gg0
\end{eqnarray*}
where the second inequality follows from the fact that $h$ is an isometry, and the third inequality follows from Lemma \ref{intersection}.
\end{proof}

A Penner system of curves  on a surface $\Sigma=\Sigma_{g, 1}$ 
is a collection $C$  of $2g$ simple closed, essential, non  boundary parallel curves, that  splits into two disjoint sets $A$, $B$ with the following properties:
\begin{enumerate}
\item The curves  in each of $A=\{ a_1, \ldots, a_g \}$ and
$B=\{ b_1, \ldots, b_g\}$ are mutually disjoint.
\item We have $i(a_i, \ b_j)=1$ if  $i=j$ and zero otherwise.
\item $\Sigma$ cut along  $C=A\cup B$ is a $4g$-gon with a disk removed.
\end{enumerate}
\begin{lemma} \label{Pennerlarge} Let $\Sigma$ be a non-sporadic surface and let $h\in   \mathrm{Mod}(\Sigma)$ be a pseudo-Anosov mapping class. For every $N\gg0$, there is a Penner system of curves $C$ such that
for any $c \in C$ we have $d(c,\ h(c)) \geqslant N/2$. That is, there are systems $C$ for which the distance  $d(c,\ h(c))$ is arbitrarily large for any $c\in C$.
\end{lemma}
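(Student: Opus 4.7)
My plan is to construct the required Penner system $C$ by applying a high power of an auxiliary pseudo-Anosov mapping class to a fixed reference Penner system on $\Sigma = \Sigma_{g,1}$.

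First, fix any Penner system $C_0 = A_0 \cup B_0$ on $\Sigma$, and choose an auxiliary pseudo-Anosov $\phi \in \mathrm{Mod}(\Sigma)$ whose stable and unstable measured foliations are distinct from those of $h$. Such $\phi$ exist in abundance, since a generic pseudo-Anosov has foliations different from the two specific foliations of $h$. For each integer $n \geq 1$, set $C_n := \phi^n(C_0)$. Since $\phi^n$ is a homeomorphism of $\Sigma$, it preserves disjointness and geometric intersection numbers, as well as the topological type of the complement of $C_0$, so each $C_n$ is again a Penner system. My goal is to show that for $n$ sufficiently large, $d(c, h(c)) \geq N/2$ for every $c \in C_n$. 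Since $|C_0| = 2g$ is finite, this reduces to showing that for each fixed $c_0 \in C_0$,
\[
 d\bigl(\phi^n(c_0),\, h(\phi^n(c_0))\bigr) \longrightarrow \infty \quad \text{as } n \to \infty,
\]
and then choosing $n$ large enough uniformly over the $2g$ curves of $C_0$.

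To establish this divergence, I invoke the theorem of Masur--Minsky \cite{MaMi} that the curve graph $\mathcal{C}_1(\Sigma)$ is Gromov hyperbolic and that every pseudo-Anosov acts on it as a loxodromic isometry with two fixed points on the Gromov boundary, corresponding to its stable and unstable foliations. By Proposition~\ref{diameter} applied to $\phi$, the orbit $\{\phi^n(c_0)\}_{n \geq 0}$ is a quasi-geodesic in $\mathcal{C}_1(\Sigma)$, hence converges to the attracting fixed point $\xi_\phi^{+}$ of $\phi$ on the Gromov boundary. By our choice of $\phi$, the point $\xi_\phi^{+}$ is distinct from the two fixed points of $h$. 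A standard calculation for loxodromic isometries of a Gromov hyperbolic space then gives that $d(x, h(x)) \to \infty$ as $x$ tends to any boundary point other than the two fixed points of $h$; applied to $x = \phi^n(c_0)$, this yields the required divergence.

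The main obstacle is the clean invocation of the loxodromic divergence property of $h$, which is standard in Gromov hyperbolic geometry but is not stated as a separate lemma earlier in the paper. The cleanest option is to cite Masur--Minsky directly, both for the Gromov hyperbolicity of $\mathcal{C}_1(\Sigma)$ and for the divergence property of loxodromic isometries. An alternative, more hands-on route would use only Proposition~\ref{diameter} together with triangle-inequality estimates comparing the $h$- and $\phi$-orbits of $c_0$, but this requires nontrivial bookkeeping to show that these quasi-geodesic orbits are eventually transverse in the sense that one leaves every tubular neighborhood of the other.
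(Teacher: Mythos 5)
Your proof is correct, but it takes a genuinely different route from the paper. Where you invoke the coarse geometry of the curve graph --- Gromov hyperbolicity (Masur--Minsky), the description of the Gromov boundary in terms of ending laminations, and the divergence of displacement as one leaves the quasi-axis of a loxodromic --- the paper works entirely inside $\mathcal{PML}(\Sigma)$. The paper first proves a claim (attributed to Minsky in the Colin--Honda paper) that for any $N$ there is a curve $c$ with $d(c, h(c)) = N$, via a compactness argument: one picks a minimal lamination $\lambda$ distinct from the two fixed laminations of $h$, approximates it by curves $a_n$, and shows by contradiction (extracting convergent subsequences of $b_n$ one step along geodesics from $a_n$ to $h(a_n)$) that $d(a_n, h(a_n)) \to \infty$. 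It then builds the Penner system \emph{around} that one curve $c$ --- taking $a_1$ non-separating and disjoint from $c$, completing to a Penner system, and propagating the distance bound to the other curves by the triangle inequality and Lemma~\ref{intersection}, at the cost of an additive constant ($N - 8$ rather than $N$). Your approach instead transports a \emph{fixed} Penner system $C_0$ by high powers $\phi^n$ of an auxiliary pseudo-Anosov whose boundary fixed points differ from those of $h$, and gets divergence from the boundary dynamics at once, uniformly over the $2g$ curves because $C_0$ is finite. Both proofs are correct; yours is conceptually cleaner and avoids the Penner-system bookkeeping, at the price of invoking the hyperbolicity of $\mathcal{C}_1(\Sigma)$, Klarreich's identification of its boundary, and the standard but not entirely trivial fact that displacement of a loxodromic isometry tends to infinity along any sequence converging to a non-fixed boundary point (you correctly flag this as the step needing a clean citation). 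The paper's version stays closer to elementary lamination theory and is more self-contained given the toolkit it has already set up.
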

\begin{proof} We begin with the following claim that is attributed to Minsky in \cite{ColinHonda}.
\vskip 0.07in

{\it{Claim.}} Given $N\gg0$ we can find  a simple closed, essential, non-boundary parallel curve $c$ on $\Sigma$ such that  $d(c,\ h(c))=N$. 
\vskip 0.07in

{\it Proof of Claim.} We use the notation,  terminology and setting used, for instance in the proof of \cite[Proposition 4.6]{MaMi}. The pseudo-Anosov representative $h: \Sigma \to \Sigma$
acts on the space of projective measured laminations of $S$ and in there it determines exactly two fixed laminations $\mu$ and $\nu$. Now pick a geodesic lamination on $S$ that is minimal (i.e. it contains no proper sub laminations) and whose class, say $\lambda$, in the space of projective laminations is not $\mu$ or $\nu$. Now we have  $h(\lambda)\neq \lambda$.

Let  $\{a_n \in {\mathcal C}_0(\Sigma) \}_{n\in \NN}$ be a sequence that converges to $\lambda$ in the space of projective laminations. Then it is known that
$h(a_n)$ converges to $h(\lambda)$.

We claim that $d(a_n, h(a_n))\to \infty$ as  $n\to \infty$.  For otherwise, and after passing to a sub-sequence if necessary, we can assume that $d(a_n, h(a_n))=m<\infty$.
Now $a_n$ and $h(a_n)$ are connected by geodesics in the curve complex of $S$. Thus we can find a sequence $\{b_n \in {\mathcal C}_0(\Sigma) \}_{n\in \NN}$,
with  $d(b_n, h(a_n))=m-1$ and $d(b_n, a_n)=1$. Since $a_n$ is disjoint from $b_n$, after taking a subsequence if necessary, we can assume that 
$b_n$ converges to a class represented by a lamination, say $\lambda'$,  on $S$ that has intersection number  zero with $\lambda$, and $h(b_n)$
converges to a geometric lamination that has intersection number  zero with $h(\lambda)$. The minimality assumption implies that $\lambda'=\lambda$.
Thus $b_n$ converges to $\lambda$ and we have $d(b_n, h(b_n))\leqslant m-1$. Inductively, we will arrive at a sequence $\{c_n \in {\mathcal C}_0(\Sigma) \}_{n\in \NN}$
that converges to $\lambda$ and we have $d(c_n, h(c_n))=0$. This would imply that  $h(\lambda)= \lambda$.
which is a contradiction since we assumed that  $h(\lambda)\neq \lambda$. This finishes the proof of the claim.
\vskip 0.07in

To continue, with the construction of the desired Penner system of curves, start with $N\gg0$ and a curve $c$ with $d(c, h(c))=N$.
 If $c$ is non-separating set $a_1:=c$. Otherwise, since $g\geq 2$,  we can find a non separating essential non-boundary parallel curve $a_1$, with $i(a_1,  c)=0$.
Then $$N=d(c, h(c))\leqslant d(c, a_1)+ d(a_1,  h(a_1))+d(h(a_1), h(c)).$$
Since $d(c, a_1)=d(h(a_1), h(c))=1$ we obtain $d(a_1, h(a_1))\geqslant N-2$. 

Now pick $b_1$ so that $i(a_1, b_1)=1$ and complete $\lbrace a_1,b_1\rbrace$ into a Penner system of curves $C$.
Now for any $c \in C$ we have 
$$N-2\leqslant d(a_1, h(a_1))\leqslant d(a_1, c)+ d(c, h(c))+d(h(a_1),  h(c)).$$
Since $i(a, \ c)\leqslant 1$, by Lemma \ref{intersection},
$d(a_1, c)=d(h(a_1), h(c))\leqslant 3$. Thus $d(c, h(c))>N-8> N/2$ for $N\gg0$.
\end{proof}
\smallskip

\subsection{Stabilizing to pseudo-Anosov monodromies} 
\label{sec:stab_pA}
 For our applications to the AMU conjecture, we want the open-book decompositions we construct to have pseudo-Anosov monodromies, and we would like the genus of the fiber surface $F$ to take any arbitrary high value. In this subsection we discuss a technique to stabilize open book decompositions that preserve the pseudo-Anosov property of monodromies while producing  Stallings curves on the pages of the decomposition.
Our construction is inspired by an argument of Colin and Honda used in the proof of
\cite[Theorem 1.1]{ColinHonda}.

Let $(\Sigma, h)$ be an open book decomposition, with connected binding $\partial \Sigma,$ of an oriented, closed 3-manifold $M.$
Let $c$ be a simple closed, homotopically essential, non-peripheral curve $c$ on $\Sigma$ and let  $\epsilon$ be an embedded arc that runs from a point $P\in c$ to a point
on $\partial \Sigma$. Consider a closed neighborhood of $\epsilon$ on $\Sigma$, which is a rectangle with one side a closed neighborhood of  $P$, say $d \subset c$, containing $P$.
Replacing $d$ with the rest of the boundary of this rectangle leads to a curve that is isotopic to $c$ on $\Sigma$. This  new curve is the union of two properly embedded arcs $\gamma \cup \delta'$,
where $\delta'$ is an arc on $\partial \Sigma$ that connects the endpoints of $\gamma$ and contains the endpoint of $\epsilon$ on $\partial \Sigma$. See Figure \ref{fig:pushoff}. To facilitate our exposition we give the following definition.
\begin{figure}
\centering
\def \svgwidth{.37\columnwidth}
\begingroup%
  \makeatletter%
  \providecommand\color[2][]{%
    \errmessage{(Inkscape) Color is used for the text in Inkscape, but the package 'color.sty' is not loaded}%
    \renewcommand\color[2][]{}%
  }%
  \providecommand\transparent[1]{%
    \errmessage{(Inkscape) Transparency is used (non-zero) for the text in Inkscape, but the package 'transparent.sty' is not loaded}%
    \renewcommand\transparent[1]{}%
  }%
  \providecommand\rotatebox[2]{#2}%
  \newcommand*\fsize{\dimexpr\f@size pt\relax}%
  \newcommand*\lineheight[1]{\fontsize{\fsize}{#1\fsize}\selectfont}%
  \ifx\svgwidth\undefined%
    \setlength{\unitlength}{83.68098522bp}%
    \ifx\svgscale\undefined%
      \relax%
    \else%
      \setlength{\unitlength}{\unitlength * \real{\svgscale}}%
    \fi%
  \else%
    \setlength{\unitlength}{\svgwidth}%
  \fi%
  \global\let\svgwidth\undefined%
  \global\let\svgscale\undefined%
  \makeatother%
  \begin{picture}(1,0.67491485)%
    \lineheight{1}%
    \setlength\tabcolsep{0pt}%
    \put(0,0){\includegraphics[width=\unitlength,page=1]{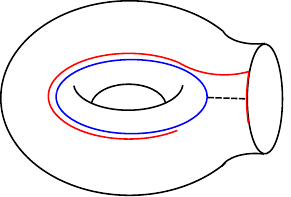}}%
    \put(0.38914356,0.54246345){\color[rgb]{1,0,0}\makebox(0,0)[lt]{\lineheight{1.25}\smash{\begin{tabular}[t]{l}$\gamma$\end{tabular}}}}%
    \put(0.38738249,0.40282809){\color[rgb]{0,0,1}\makebox(0,0)[lt]{\lineheight{1.25}\smash{\begin{tabular}[t]{l}$c$\end{tabular}}}}%
    \put(0,0){\includegraphics[width=\unitlength,page=2]{pushoff.pdf}}%
  \end{picture}%
\endgroup%

\caption{The curve $\gamma$ is obtained by pushing $c$ on $\partial \Sigma$ along the point $P,$ the endpoint of the dashed line on $c$.}
\label{fig:pushoff} 
\end{figure}
\begin{definition}\label{facilitate} With the setting as above, we will say that $\gamma$ is obtained by pushing $c$ on $\partial \Sigma$ along the point $P$. We will  also say that
the arc 
$\delta'\subset \partial \Sigma$ complements $\gamma$.
\end{definition}

We need the following.

\begin{proposition} \label{oneboundary} Let $\Sigma:=\Sigma_{g,1}$ and let $(\Sigma, \phi)$  be an open book decomposition of $M$ such that $\phi$ is pseudo-Anosov.
Suppose  that $\Sigma$ is non-sporadic and that $c$ is a  simple closed, homotopically essential, non-peripheral curve $c$ on $\Sigma$, such that  $d(c, \phi(c))=N$, for $N\gg0$. Let $\gamma$ be an arc
obtained by pushing $c$ on $\partial \Sigma$ along any point, and let  $(\Sigma', \phi')$  denote an open book decomposition obtained by stabilizing  $(\Sigma, \phi)$ along $\gamma$.
Then $\phi'$ is pseudo-Anosov.
\end{proposition}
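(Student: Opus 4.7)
The plan is to show that the new monodromy $\phi'$ is pseudo-Anosov by establishing hyperbolicity of its mapping torus $M_{\phi'}$, which by Thurston's fibered hyperbolization theorem is equivalent; the argument will be a Dehn-filling variant of the proof of Theorem~\ref{hyperbolic}.

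First I would unwind the construction. Writing $\widetilde{\phi}$ for $\phi$ extended by the identity on the plumbed Hopf band $H$, Definition~\ref{def:stabilization} gives $\phi' = \widetilde{\phi} \circ \tau_{c'}^{\pm 1}$, where $c' = \gamma \cup \alpha_H$ is the core of $H$ on $\Sigma'$, with $\alpha_H$ an arc on $H$. Since $\tau_{c'}$ fixes $c'$ and $\widetilde{\phi}$ restricts to the identity on $H$, we get $\phi'(c') = \widetilde{\phi}(c') = \phi(\gamma) \cup \alpha_H$, and the intersection number satisfies
\[
  i_{\Sigma'}(c',\phi'(c')) \;=\; i_{\Sigma}(\gamma, \phi(\gamma)).
\]
Because $\gamma$ is a pushoff of $c$ across a point of $\partial \Sigma$, the right-hand side is comparable to $i_{\Sigma}(c, \phi(c))$, which by Lemma~\ref{intersection} is bounded below by $(N-1)/2$. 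Hence $c'$ and $\phi'(c')$ have large intersection number in $\Sigma'$, and by the standard logarithmic lower bound on curve graph distance this yields $d_{\mathcal{C}(\Sigma')}(c',\phi'(c'))$ as large as we wish when $N \gg 0$.

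To conclude pseudo-Anosov, I would follow the template of Theorem~\ref{hyperbolic}. The mapping torus $M_{\widetilde{\phi}}$ is not hyperbolic: a $\widetilde{\phi}$-fixed essential curve on $\Sigma'$ (separating the $\Sigma$-side from the Hopf band) produces an essential JSJ torus $T$ splitting $M_{\widetilde{\phi}}$ into the hyperbolic piece $M_\phi$ and a Seifert-fibered piece $H \times S^1$. Drilling the knot $L := c' \times \{1/2\}$ out of $M_{\widetilde{\phi}}$ yields a manifold $N'$ in which $T$ is no longer essential, since $L$ meets $T$; the same case analysis as in the proof of Theorem~\ref{hyperbolic}, now distinguishing vertical annuli on the $\Sigma$-side from horizontal annuli crossing the plumbing region, driven by the large intersection number estimate above and by Proposition~\ref{diameter}, rules out any other essential torus, so $N'$ is hyperbolic. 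Finally, $M_{\phi'}$ is a $\pm 1/1$-Dehn filling of $N'$ along the cusp of $L$ (with fiber framing), and the effective hyperbolic Dehn filling theorem of~\cite[Theorem~1.1]{fkp:filling} delivers hyperbolicity of $M_{\phi'}$ once the slope length exceeds $2\pi$, which follows from $N \gg 0$.

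The main obstacle will be the atoroidality of $N'$: one has to exclude essential tori built by matching vertical annuli in the cut-open fiber bundle with horizontal annuli running through the Hopf band and across $T$. The large-distance (equivalently, large-intersection-number) hypothesis is exactly what prevents such matchings, via the same pseudo-Anosov displacement arguments as in Cases~1 and~2 of the proof of Theorem~\ref{hyperbolic}; everything else is then a direct application of Thurston's hyperbolization and effective Dehn filling.
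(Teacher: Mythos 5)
Your proposal takes a genuinely different route from the paper. The paper's proof follows Colin--Honda \cite[Theorem 1.1]{ColinHonda}: it is an entirely $2$-dimensional Nielsen--Thurston argument, showing directly that $\phi'$ is neither reducible nor periodic by a case analysis on intersection numbers of candidate invariant multicurves $\delta$ with the co-core $a$ of the stabilization band and with $c'$. Your plan instead passes to the mapping torus $M_{\phi'}$, drills out $L = c'\times\{1/2\}$ from $M_{\widetilde{\phi}}$, and tries to conclude via effective hyperbolic Dehn filling, mimicking the proof of Theorem~\ref{hyperbolic}.

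This route has a fatal gap at the Dehn filling step. In Theorem~\ref{hyperbolic} (the Long--Morton/Fathi argument), the cusped manifold $N$ is \emph{fixed} and you fill along slopes $1/n_i$ which become arbitrarily long on fixed cusps as $|n_i|\to\infty$; that is exactly what makes the effective filling estimate from \cite{fkp:filling} apply. Here, however, the twist power is fixed at $\pm 1$, so the filling slope on the cusp of $L$ is the fixed slope $\pm 1/1$ (in the fiber framing / meridian basis). Increasing $N=d(c,\phi(c))$ changes which curve $c'$ you drill — and hence changes the hyperbolic manifold $N'$ and its cusp shape — but it gives no control whatsoever on the length of the $\pm 1$ slope in that cusp. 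There is no mechanism to guarantee $\ell(\pm 1) > 2\pi$, and without that the effective Dehn filling theorem simply does not apply. This is precisely why the Colin--Honda argument works combinatorially on the surface rather than via hyperbolic Dehn filling: one cannot vary the twisting power.

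Two secondary points. First, the phrase ``the standard logarithmic lower bound on curve graph distance'' asserting that large $i_{\Sigma'}(c',\phi'(c'))$ forces large $d_{\mathcal{C}(\Sigma')}(c',\phi'(c'))$ is false: Lemma~\ref{intersection} and its logarithmic refinements bound distance from \emph{above} by intersection number, never from below (e.g.\ $a$ and $\tau_c^n(a)$ have intersection number growing linearly in $n$ but curve-graph distance $\leqslant 2d(a,c)$ for all $n$). Second, even the atoroidality of $N'$ is more delicate than in Theorem~\ref{hyperbolic}: there $M_f$ is already hyperbolic, whereas here $M_{\widetilde{\phi}}$ has an essential JSJ torus $T$; you correctly note $L$ must puncture $T$, but ruling out all other essential tori built from vertical annuli crossing the plumbing region would require an argument you have not supplied. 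I would suggest abandoning the $3$-dimensional filling strategy here and following the Colin--Honda intersection-number analysis, which is what the paper does.
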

\begin{proof}
 Let $c'$  the closed curve in $\Sigma'$ which  the core of the Hopf-band used in the stabilization, hence 
  $\gamma=c\cap \Sigma$.
 Recall that  $\phi'=\phi \circ \tau_{c'}^{\pm 1}$, according to whether we made a positive or a negative stabilization.
The proof of this proposition is essentially given in the proof of   \cite[Theorem 1.1]{ColinHonda}.
We note that although the statement of \cite[Theorem 1.1]{ColinHonda} contains the hypothesis that the starting monodromy $\phi$ is right-veering and the stabilization positive, this is actually only used to show that the new monodromy is also right-veering.

For the sake of completeness, we sketch the proof of \cite[Theorem 1.1]{ColinHonda}, while referring to their paper for full details.

 First we argue that $ \phi'=\phi \circ \tau_{c'}^{\pm 1}$ is not reducible, by arguing  that $\phi'(\delta)\neq \delta$ for any collection  $\delta$ of disjoint simple  curves  on $\Sigma'$.
  \vskip 0.06in
  
{\it Case 1.}  First, suppose that $\delta \subset \Sigma \subset \Sigma'.$ 
If  $i_{\Sigma}(\phi(\delta),c)=i_{\Sigma'}(\phi(\delta),c)=0,$  in which case $\delta$ can not be parallel to $\partial \Sigma$, 
then  $\phi'(\delta)=\phi(\delta) \neq \delta$ as $\phi$ is pseudo-Anosov. 
Suppose now that $i_{\Sigma}(\phi(\delta),c)=i_{\Sigma'}(\phi(\delta),c')=m>0.$ Then we claim that $i_{\Sigma'}(\phi'(\delta),a)=m$, where $a$ is the co-core of the stabilization band.
Indeed, there is an obvious representative of $\phi'(\delta)$ with $m$ intersection points with $a.$ If $i_{\Sigma'}(\phi'(\delta),a)<m,$ it means that there is a bigon consisting of a subarc of $a$ and a subarc of $\phi'(\delta).$ One checks that this bigon should be obtained by the gluing of a bigon in $\delta \cup c$ and a band that runs along $c',$ but no such bigon exists, otherwise we would $i_{\Sigma'}(\phi(\delta),c)<m;$ a contradiction.

\vskip 0.06in
{\it Case 2.} Assume that $\delta \nsubseteq \Sigma,$ that is $i_{\Sigma'}(\delta,a)=k>0.$ Let $B=\Sigma'\setminus \Sigma$ be the stabilization band, the intersection of $\phi(\delta)$ with $B$ consists of ``vertical arcs",  and arcs that intersect $c'.$ The later arcs may be either all of positive or all of negative slopes in $B.$  See \cite[Figures 1, 2]{ColinHonda}. Up to isotopy, we may assume that there is no triangle with boundary an arc of $c',$ an arc of $\delta$ and an arc of $\partial B.$ Under this hypothesis let $m$ be the number of arcs of $\delta \cap B$ of positive/negative slopes. Let $n$ be the number of other intersections of $c'$ and $\phi(\delta),$ so that $i_{\Sigma'}(\phi(\delta),c')=m+n.$
Then by a similar bigon chasing argument, Colin and Honda show that $i_{\Sigma'}(\phi'(\delta),a)=k\pm m+ n,$ where the sign depends on the sign of the stabilization and the sign of the slopes of the $m$ arcs. Thus if $\pm m + n\neq 0$ we have $\phi'(\delta) \neq \delta$ as $i_{\Sigma'}(\delta,a)=k \neq k\pm m+n=i_{\Sigma'}(\phi'(\delta),a).$ 

 We would like to stress that the above step is essentially the only place where allowing positive as well as negative stabilizations plays any role: it just exchanges the two cases where $i_{\Sigma'}(\phi'(\delta),a)=k+m+n$ and  $i_{\Sigma'}(\phi'(\delta),a)=k-m+n.$

In the case $\pm m+n=0,$ they show that $i_{\Sigma}(\delta,c')\neq i_{\Sigma'}(\phi'(\delta),c').$ Indeed, $i_{\Sigma'}(\phi'(\delta),c')\leqslant m+n\leqslant 2k.$ If $i_{\Sigma'}(\delta,c')\leqslant 2k,$ Colin and Honda use the fact that $d(c,\phi(c))\gg1$ and Lemma \ref{intersection} to deduce that $i_{\Sigma'}(\phi'(\delta),c')\gg2k,$ a contradiction. This finishes the proof that  $\phi'$ is not reducible.

Finally,  in the proof of \cite[Theorem 1.1]{ColinHonda}, it is shown  that $\phi'$ is not periodic by
considering the curve $\partial \Sigma$ on $\Sigma'$, and arguing that $i_{\Sigma'}((\phi')^n(\partial \Sigma),a) \underset{|n|\rightarrow \infty}{\longrightarrow} \infty.$
Note that, if $\phi'$  were periodic, then $(\phi')^m(\partial \Sigma)=\partial \Sigma$ for some $m\in \NN$ and thus above intersection number would be bounded.
 \end{proof}
\smallskip

Next we turn our attention to surfaces with two boundary components and stabilization along an arc that connects the two boundary components:
Suppose that $\Sigma=\Sigma_{g,2}$ and let $\beta$ be a properly embedded arc on $\Sigma$ connecting the two boundary components of  $\partial \Sigma$.
Thicken $\gamma$ into a rectangle $R$ that intersects the two components into sub-arcs of $\partial \Sigma$ that contain the endpoints of $\gamma$. The union of the complementary arcs
on $\partial \Sigma$, together with the  arcs of $\partial R$ that are parallel to $\gamma$,  give us a simple closed curve 
 $c_{\beta}\subset \Sigma$.

\begin{proposition} \label{twoboundary} Let $\Sigma:=\Sigma_{g,2}$ be non-sporadic and let $(\Sigma, \psi)$  be an  open book decomposition of $M$ such that $\psi$ is pseudo-Anosov.
Let $\beta$ be a properly embedded arc on $\Sigma$ connecting the two boundary components of  $\partial \Sigma$, and let $(\Sigma', \psi')$  denote an open book decomposition obtained by stabilizing  $(\Sigma, \phi)$ along $\beta$. If $d(c_{\beta}, \psi(c_{\beta}))=N$, for $N\gg0$,  then $\psi'$ is pseudo-Anosov.
\end{proposition}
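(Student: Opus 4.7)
The proof is a direct adaptation of Proposition \ref{oneboundary}, with $c_\beta$ playing the role of $c$ and $\beta$ playing the role of $\gamma$. After stabilization along $\beta$, the core $c'$ of the attached Hopf band satisfies $c' \cap \Sigma = \beta$, and its co-core is a properly embedded arc $a$ of the stabilization band meeting $c'$ transversely once. The crucial translation between the hypotheses of Proposition \ref{oneboundary} and those available here is the identity
\[
i_\Sigma(\delta, c_\beta) \;=\; 2\, i_\Sigma(\delta, \beta) \;=\; 2\, i_{\Sigma'}(\delta, c')
\]
valid for every essential simple closed curve $\delta$ in the interior of $\Sigma$, which holds because $c_\beta$ is, up to a small inward push-off of its boundary arcs, the boundary in $\Sigma$ of a regular neighbourhood of $\beta$, so each crossing of $\delta$ with $\beta$ contributes exactly two crossings with $c_\beta$. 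Consequently, the hypothesis $d(c_\beta, \psi(c_\beta)) = N \gg 0$, together with Lemma \ref{intersection}, converts to the intersection lower bounds with $c'$ that the Colin-Honda bigon machinery requires, up to the harmless factor of two.

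With this translation in hand, the proof mirrors that of Proposition \ref{oneboundary}. To show $\psi' = \psi \circ \tau_{c'}^{\pm 1}$ is not reducible, suppose $\psi'(\delta) = \delta$ for an essential non-peripheral simple multicurve $\delta \subset \Sigma'$. If $\delta \subset \Sigma$ and $i_\Sigma(\psi(\delta), \beta) = 0$, then $\psi'(\delta) = \psi(\delta) \neq \delta$ since $\psi$ is pseudo-Anosov; if $i_\Sigma(\psi(\delta), \beta) = m > 0$, the bigon-chasing of \cite{ColinHonda} yields $i_{\Sigma'}(\psi'(\delta), a) = m > 0 = i_{\Sigma'}(\delta, a)$, a contradiction. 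If instead $\delta$ meets $a$ in $k > 0$ points, one classifies the arcs of $\psi(\delta) \cap B$ in the stabilization band $B$ by slope and, as in \cite{ColinHonda}, obtains $i_{\Sigma'}(\psi'(\delta), a) = k \pm m + n$ with the signs determined by the stabilization sign and the common slope sign. The case $\pm m + n \neq 0$ is immediate; in the case $\pm m + n = 0$ one has $i_{\Sigma'}(\psi'(\delta), c') \leqslant 2k$, which contradicts the lower bound on $i_{\Sigma'}(\psi'(\delta), c')$ forced by the hypothesis $d(c_\beta, \psi(c_\beta)) \gg 0$, Lemma \ref{intersection}, and the factor-of-two identity above. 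To rule out periodicity one runs the same test-curve argument as in Proposition \ref{oneboundary}, using $\partial \Sigma'$ to show $i_{\Sigma'}((\psi')^n(\partial \Sigma'), a) \to \infty$ as $|n| \to \infty$.

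The main obstacle is purely organizational: one must verify that replacing an arc $\gamma$ with both endpoints on a single boundary circle by an arc $\beta$ joining two distinct boundary circles introduces no new pathological cases in the Colin-Honda bigon analysis, and that the curve $c_\beta$, part of which lies on $\partial \Sigma$, can legitimately be regarded as a simple closed curve in the interior of $\Sigma$ (via the small push-off) so that its distance in the curve graph is well-defined and so that $c_\beta$ is essential and non-peripheral (which follows since $c_\beta$ bounds a pair of pants in $\Sigma$ with $\partial_1\Sigma, \partial_2\Sigma$ as its other two boundary components). Both points are addressed by the push-off together with the factor-of-two identity; once these are in place, the argument is a literal transcription of Proposition \ref{oneboundary}.
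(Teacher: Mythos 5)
Your proposal is correct and takes essentially the same approach as the paper, which simply defers to Colin--Honda's Subcase 2 with the same modifications used in Proposition~\ref{oneboundary}; your factor-of-two identity $i_\Sigma(\delta,c_\beta)=2\,i_{\Sigma'}(\delta,c')$ usefully makes explicit the translation between the distance hypothesis on $c_\beta$ and the intersection estimates the bigon machinery requires. One small slip: in the periodicity step the test curve should be a component of $\partial\Sigma$ pushed into the interior of $\Sigma'$ (as in Proposition~\ref{oneboundary}), not $\partial\Sigma'$, since $\psi'$ fixes $\partial\Sigma'$ pointwise, so $(\psi')^n(\partial\Sigma')=\partial\Sigma'$ and the intersection number would be constant.
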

\begin{proof} Let $c'$ be the closed curve in $\Sigma'$ which consist of the arc $\beta$ and the core of the Hopf-band coming from the stabilization.
Now $\psi'=\psi \circ \tau_{c'}^{\pm 1}$, according to whether we made a positive or a negative stabilization. The proof
also follows from  \cite[Subcase 2 of Theorem 1.1]{ColinHonda}. Again the statement of \cite[Theorem 1.1]{ColinHonda} contains the hypothesis that the starting monodromy $\psi$ is right-veering and the stabilization positive but a similar analysis as this in the proof of Proposition \ref{oneboundary} show that the conclusion holds without these hypotheses.
\end{proof}

Start with a genus $g$ open book decomposition $(\Sigma, h)$ of  $M$ such that  $h$ is pseudo-Anosov, and such that $\partial \Sigma$ has one component.
Let $C=A\cup B$ be a Penner system of curves on $\Sigma=\Sigma_{g, 1}$ where  $A=\{ a_1, \ldots, a_g \}$ and
$B=\{ b_1, \ldots, b_g\}$ as earlier.

 For $i=1,\ldots, g$, let $P_i$ denote the intersection point of $a_i$ and $b_i$. Let $\alpha_i$ and $\beta_i$ denote the arcs obtained by pushing $a_i$ and $b_i$
 on $\partial \Sigma$ along $P_i$. Let $\Gamma_C$ denote the set of these arcs.
 By Definition \ref{facilitate}, this process involves choosing arcs $\epsilon_i$ from $P_i$ to $\partial S$. We can choose these arcs so that they are mutually disjoint.
 For $i=1, \ldots, g$, we can arrange so that  we have,
 
 \begin{enumerate}
 
\item  the arcs in $\Gamma$ are pairwise disjoint
and
splits into two  sets $\{ \alpha_1, \ldots, \alpha_g \}$ and
 $\{ \beta_1, \ldots, \beta_g\}$.
 \item  the endpoints  $\alpha_i$   on $\partial \Sigma$ are separated by these
 of  $\beta_j$, precisely when $i=j$; 
 \item the intersection of the arcs complementing $\alpha_i$ and  $\beta_j$
 on  $\partial \Sigma$ is an interval when $i=j$ and empty otherwise.
 \end{enumerate}
 
 Now let $(\Sigma' , \  h_C)$ denote the open book decomposition obtained by  $(\Sigma, h)$ by a positive stabilization along each arc $\alpha_i$ and a negative stabilization along 
 $\beta_j$. Note that $\Sigma'$ has connected boundary and genus $2g,$ indeed, each pair of stabilizations  along the arcs $\alpha_i$ and $\beta$ leaves the boundary connected while it raises the genus by $1.$  For $g=2$ the situation is illustrated in Figure \ref{fig:stab_abelian}.
 
 \begin{theorem} \label{stabilize} Let  $(\Sigma, h)$  be a genus $g$ open book decomposition of $M$ with  $h$ pseudo-Anosov. 
 There is a Penner system of curves  $C$, 
 so that the monodromy of the stabilized open book decomposition  $(\Sigma' , \  h_C)$ is also  pseudo-Anosov.
 \end{theorem}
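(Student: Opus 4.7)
The plan is to iterate the stabilization procedure exactly $2g$ times, once for each arc in the family $\Gamma_C$, and to apply Propositions \ref{oneboundary} and \ref{twoboundary} at each stage to ensure that the pseudo-Anosov property is preserved.

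\textbf{Setup.} I would first fix a threshold $N_0$ large enough to trigger both Proposition \ref{oneboundary} and Proposition \ref{twoboundary}, and an additional constant $K=K(g)$ designed to absorb the cumulative distortion of curve complex distances over $2g$ successive stabilizations. Lemma \ref{Pennerlarge} then provides a Penner system $C=A\cup B$ on $\Sigma_{g,1}$ with $d(c,h(c))\geqslant N_0+K$ for every $c\in C$. From $C$, I would build the arc family $\Gamma_C=\{\alpha_i,\beta_i\}_{i=1}^g$ using mutually disjoint push-arcs $\epsilon_i$ from $P_i=a_i\cap b_i$ to $\partial\Sigma$, arranged so that the interleaving conditions (1)--(3) preceding the theorem are satisfied. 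In particular, the ordering ensures that $\alpha_i$ has both endpoints on a single current boundary component, while $\beta_i$ connects two distinct components of the current boundary, at the moment of use.

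\textbf{Iteration.} I would stabilize in the order $\alpha_1,\beta_1,\alpha_2,\beta_2,\ldots,\alpha_g,\beta_g$, producing a sequence of open books $(\Sigma_j,h_j)$ on surfaces whose boundary count alternates between $2$ and $1$, ending with a genus $2g$ surface with one boundary component. At each odd step $j=2i-1$, the surface $\Sigma_{j-1}$ has one boundary component and I would invoke Proposition \ref{oneboundary} with $c=a_i$ and stabilization arc $\alpha_i$. At each even step $j=2i$, the surface $\Sigma_{j-1}$ has two boundary components and I would invoke Proposition \ref{twoboundary} for the arc $\beta_i$ and its associated closed curve $c_{\beta_i}$. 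Provided the relevant distance hypothesis $d(c,h_{j-1}(c))\geqslant N_0$ is verified at each stage, the monodromy $h_j$ is pseudo-Anosov, and the final $h_{2g}=h_C$ is the desired monodromy.

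\textbf{Distance control and main obstacle.} The heart of the argument is the inductive verification of the distance hypothesis at each step. At stage $j$, we have $h_j=h\circ\tau_{c_1}^{\pm 1}\circ\cdots\circ\tau_{c_j}^{\pm 1}$, where each Hopf band core $c_\ell$ is isotopic in $\Sigma_j$ to one of the original Penner curves in $C$. The next curve to be used (either $a_{i+1}$, or the closed curve $c_{\beta_{i+1}}$, which in $\Sigma_j$ is isotopic to $b_{i+1}$) has geometric intersection number at most one with each such core, by the intersection pattern of a Penner system. Using Lemma \ref{intersection} to bound $d(c,\tau_{c_\ell}^{\pm 1}(c))$ and iterating the triangle inequality with the fact that the mapping classes act isometrically on the curve graph, one can bound the total deviation $|d(c,h_{j-1}(c))-d(c,h(c))|$ by a constant $K=O(g)$. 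Choosing the buffer in the first step to absorb this constant then closes the induction. The main obstacle is precisely this bookkeeping: carefully identifying each core $c_\ell$ with its correct isotopy class in the changing surface $\Sigma_j$, and confirming that the inclusions $\mathcal{C}(\Sigma_{j-1})\hookrightarrow \mathcal{C}(\Sigma_j)$ induced by stabilization do not distort the distances of the curves in $C$ by more than the bookkeeping allows.
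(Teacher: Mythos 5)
Your proposal takes essentially the same route as the paper: invoke Lemma \ref{Pennerlarge} to seed a Penner system with large curve-graph displacement, stabilize one Hopf band at a time in the order $\alpha_1,\beta_1,\ldots,\alpha_g,\beta_g$, apply Propositions \ref{oneboundary} and \ref{twoboundary} alternately, and control the distance hypothesis by triangle-inequality bookkeeping via Lemma \ref{intersection}. The only difference is cosmetic: you use an additive buffer $N_0+K$ with $K=O(g)$, whereas the paper packages the same linear drop inside a multiplicative $4^{g-k}N_0$ bound; and you explicitly flag the subtlety of comparing distances across the growing curve graphs $\mathcal{C}(\Sigma_{j-1})\hookrightarrow\mathcal{C}(\Sigma_j)$, which the paper handles implicitly.
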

 
 \begin{proof} 

For any $N=4^g \ N_0\ggg$,  we can use Lemma \ref{Pennerlarge} to pick a  Penner system of curves   $C$ so that for any $c\in C$,
 $\displaystyle{d(c,  h(c))>\frac{ 4^{g}}{2}\ N_0}$.  Then we stabilize along the corresponding arcs in $\Gamma_C=\{\alpha_1,  \beta_1, \ldots, \alpha_g,  \beta_g\}$,
 by a positive stabilization along each arc $\alpha_i$ and a negative stabilization along each
 $\beta_j$.
 
The new monodromy $h_C$ is expressed as
 $$h_C=h \circ \tau_{a'_1} \circ \tau_{b'_1}^{-1}\circ \ldots \circ    \tau_{a'_g} \circ \tau_{b'_g}^{-1},$$
 where each curve contains the corresponding stabilization arc in $\Gamma_C$  and is  the core of the corresponding Hopf-band.
 
 Let us consider the stabilization process done in steps; one Hopf-band at a time. First, we stabilize along $\alpha_1$ to get an open book decomposition
  $(\Sigma_1, h_1)$, where  $h_1:=h \circ \tau_{a'_1} $ and $\Sigma_1$ has two boundary components.
By Proposition \ref{oneboundary} the monodromy  $h_1 $ is pseudo-Anosov.
By construction now $\beta_1$ has its endpoints on different components of $\partial \Sigma_1$.
Consider the curve $c_{\beta_1}$ constructed out of $\beta_1$ with the process described  before the statement of Proposition \ref{twoboundary}.
We claim  that we have
$$d(c_{\beta_1},\  h_1(c_{\beta_1}))>4^{g-1} \ N_0\gg g.$$ 
To see this claim first we note that, by construction, we have an arc $\delta^1\subset \partial \Sigma$ such that
(i)  $\delta^1$ connects the endpoints of $\beta_1$;
(ii) the interior of $\delta^1$ contains exactly one endpoint of $\alpha_1$;
(iii) the curve  $\beta_1\cup \delta^1$ is isotopic to $b_1$ on $\Sigma$.
We have $$d(b_1, \ h(b_1))\leqslant d(b_1, \ h_1(b_1)) + d( h_1(b_1), \ h(b_1)).$$
Using Lemma \ref{intersection}, we have  $d( h_1(b_1), \ h(b_1))=d( \tau_{a_1}(b_1), \ b_1) \leqslant 3$, since $i(a_1, \ b_1)=1$.
Thus $d(b_1, \ h_1(b_1))>\displaystyle {\frac{ 4^{g}}{2}\ N_0-3.}$

The intersection number of $c_{\beta_1}$  and $b_1$  on $\Sigma_1$ is $0$ (see Figure \ref{fig:stab_abelian}). Therefore
$$d(c_{\beta_1},h_1(c_{\beta_1}))\geqslant d(c_{\beta_1},b_1)-d(b_1,h_1(b_1))-d(h_1(b_1),h_1(c_{\beta_1}))>\frac{ 4^{g}}{2}\ N_0-5$$
and hence   $d(c_{\beta_1},\  h_1(c_{\beta_1}))>4^{g-1} \ N_0.$
We can apply Proposition \ref{twoboundary},  to conclude that $h_2=h_1\circ \tau^{-1}_{b'_1}$ is pseudo-Anosov. 

For $1<k \leq g$, let $(\Sigma_{k}, h_{k})$ be the genus $g+k$ open-book decomposition obtained by stabilizing on the first $k$ pairs of arcs $(\alpha_i,\beta_i)$ in  $\Gamma_C,$
where
$$h_{k}=h \circ \tau_{a'_1} \circ \tau_{b'_1}^{-1}\circ \ldots \circ    \tau_{a'_{k}} \circ \tau_{b'_{k}}^{-1}.$$
Proceeding inductively as in the case $k=1,$ we can show that for any $1\leqslant k \leqslant g,$ the map $h_{k}$
is pseudo-Anosov and that for any $c\in C$,
 $\displaystyle{d(c,  h_{k}(c))> 4^{g-k} \ N_0}\gg0,$ where the last distance is taken in ${\mathcal C}_1(\Sigma_{k}).$

Note that for $k=g,$ we have $h_g=h_C$  and $\displaystyle{d(c,  h_C(c))> \frac{N_0}{2}\gg0},$ for any $c\in C$.
 \end{proof}
 
 As a corollary of the proof of Theorem \ref{stabilize} we have the following which is a special case of Theorem 1.1 of \cite{ColinHonda}.

 \begin{corollary}
\label{thm:colin-honda}For any $3$-manifold $M,$ there exists a genus $g_0=g_0(M)$ such that for any $g\geqslant g_0,$ there is an open book decomposition $(\Sigma,h)$ of $M,$ such that $\partial \Sigma$ is connected, $\Sigma$ has genus $g,$ and $h$ is pseudo-Anosov.
\end{corollary}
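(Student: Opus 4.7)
The plan is to deduce the corollary from Myers' theorem combined with a single ``increment'' step extracted from the proof of Theorem \ref{stabilize}. By Myers' theorem \cite{Myers:open_books}, the closed $3$-manifold $M$ admits an open book decomposition $(\Sigma_{g_0,1}, h)$ with connected binding and pseudo-Anosov monodromy for some integer $g_0 = g_0(M)$. This will serve as the base case of the induction.

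For the induction step, suppose we already have an open book decomposition $(\Sigma_{g,1}, h)$ of $M$ with connected binding, fiber genus $g \geq g_0$, and pseudo-Anosov monodromy. I would apply Lemma \ref{Pennerlarge} to find a Penner system of curves $C = \{a_1, b_1, \ldots, a_g, b_g\}$ on $\Sigma_{g,1}$ such that $d(c, h(c))$ is as large as needed for every $c \in C$. Then perform exactly one pair of stabilizations: first a positive stabilization along the arc $\alpha_1$ (obtained from $a_1$ as in Definition \ref{facilitate}), producing an intermediate open book $(\Sigma_{g,2}, h_1)$ whose monodromy is pseudo-Anosov by Proposition \ref{oneboundary}; then a negative stabilization along $\beta_1$, producing $(\Sigma_{g+1,1}, h_2)$. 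The Euler characteristic bookkeeping in the proof of Theorem \ref{stabilize} confirms that this pair of plumbings raises the genus by exactly one and leaves the boundary connected. Moreover, the verification in the proof of Theorem \ref{stabilize} that $d(c_{\beta_1}, h_1(c_{\beta_1})) \gg 0$ carries over verbatim, so Proposition \ref{twoboundary} gives that $h_2$ is also pseudo-Anosov. Hence $(\Sigma_{g+1,1}, h_2)$ is an open book decomposition of $M$ with connected binding, fiber genus $g+1$, and pseudo-Anosov monodromy.

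Iterating this ``one-pair'' step starting from the base case $(\Sigma_{g_0,1}, h)$ then produces, for every $g \geq g_0$, an open book decomposition of $M$ of the desired form, which is exactly the statement of the corollary. I do not anticipate a serious obstacle beyond isolating the single-pair stabilization cleanly from the inductive cascade written out for $k = 1, \ldots, g$ in Theorem \ref{stabilize}. The one mild subtlety is that at each iteration the Penner system must be chosen anew, because the monodromy changes; however Lemma \ref{Pennerlarge} applies uniformly to any pseudo-Anosov mapping class on any non-sporadic $\Sigma_{g,1}$, so this presents no difficulty.
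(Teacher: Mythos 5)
Your proposal is correct and follows the paper's own argument: the paper likewise takes the minimal genus $g_0$ from Myers' theorem as the base case and then invokes the stabilization machinery (Theorem \ref{stabilize}, whose proof contains exactly the one-pair step you isolate, recorded explicitly as the $k=1$ case of Corollary \ref{stabilizeless}) to increment the genus by one at a time while preserving connected binding and pseudo-Anosov monodromy. Your attention to the need to rechoose the Penner system at each step is exactly the right hygiene, and matches what the paper's inductive appeal silently assumes.
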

\begin{proof}  As said earlier $M$ contains open book decompositions $(\Sigma,h)$, with connected binding and pseudo-Anosov monodromy $h$. Let $g_0=g_0(M)$ the smallest number that occurs as the genus of such an open-book decomposition. The result follows by induction applying Theorem \ref{stabilize}.
\end{proof}

The proof of Theorem \ref{stabilize} shows that if we let 
 $\Gamma_{k}=\{\alpha_1,  \beta_1, \ldots, \alpha_k,  \beta_k\}\subset \Gamma_{C} $, for $1\leq k\leq g$, the open book decomposition obtained by  stabilizing along  $\Gamma_{k}$ has
 pseudo-Anosov monodromy. More specifically, we have the following.

 \begin{corollary} \label{stabilizeless} Let  $(\Sigma, h)$  be a genus $g$ open book decomposition of $M$ such that $h$ is pseudo-Anosov.
 There is a Penner system of curves  $C$ so that, for $1\leq k\leq g$,  $(\Sigma_{k}, h_{k})$  has genus  $g+k$ 
 and the monodromy $h_{k}=h \circ \tau_{a'_1} \circ \tau_{b'_1}^{-1}\circ \ldots \circ    \tau_{a'_{k}} \circ \tau_{b'_{k}}^{-1}$
 is pseudo-Anosov.
 \end{corollary}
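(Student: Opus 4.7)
The plan is to observe that the inductive construction used in the proof of Theorem \ref{stabilize} already produces, at each of its intermediate stages, an open book decomposition of the desired form, and that the inductive hypothesis verified there is precisely the statement we need. So the strategy is simply to re-read the proof of Theorem \ref{stabilize} while keeping track of the intermediate data, rather than discarding everything except the final stage.

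More concretely, I would proceed as follows. Fix $N_0 \gg 0$ large enough so that all the curve-graph inequalities used in the proof of Theorem \ref{stabilize} hold, and apply Lemma \ref{Pennerlarge} to choose a Penner system of curves $C = A \cup B$ on $\Sigma$ with $d(c, h(c)) > \tfrac{4^g}{2} N_0$ for every $c \in C$. Form the associated arcs $\Gamma_C = \{\alpha_1,\beta_1,\ldots,\alpha_g,\beta_g\}$ as in Section \ref{sec:stab_pA}, taking care to arrange the auxiliary arcs $\epsilon_i$ from the points $P_i = a_i \cap b_i$ to $\partial \Sigma$ to be pairwise disjoint and to satisfy the three combinatorial conditions stated before Theorem \ref{stabilize}.

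Now stabilize one arc at a time: at step $k$, first perform a positive stabilization along $\alpha_k$ to obtain $(\Sigma_k^+, h \circ \tau_{a'_1} \tau_{b'_1}^{-1}\cdots \tau_{a'_k})$ with two boundary components, and then a negative stabilization along $\beta_k$ to produce $(\Sigma_k, h_k)$. A Euler characteristic count shows that each such pair of stabilizations leaves $\partial \Sigma_k$ connected while raising the genus by one, so $\Sigma_k$ has genus $g+k$ and one boundary component as required. The pseudo-Anosov property is propagated by applying Proposition \ref{oneboundary} at the first (positive) stabilization of the pair and Proposition \ref{twoboundary} at the second (negative) stabilization; their hypotheses are exactly the curve-graph distance inequalities $d(c, h_{k-1}(c)) \gg 0$ for the relevant $c \in C$, and the proof of Theorem \ref{stabilize} shows that one can maintain the bound $d(c, h_k(c)) > 4^{g-k} N_0$ at every intermediate step by the triangle inequality combined with Lemma \ref{intersection}.

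The main point to be careful about, which is the only place any real work is needed beyond quoting Theorem \ref{stabilize}, is that the distance estimate $d(c_{\beta_k}, h_k'(c_{\beta_k})) > 4^{g-k}N_0$ for the curve $c_{\beta_k}$ associated with $\beta_k$ on the partially stabilized surface should be verified at each step, so that the hypothesis of Proposition \ref{twoboundary} is satisfied when we stabilize along $\beta_k$. This is handled exactly as in the case $k=1$ of the proof of Theorem \ref{stabilize}: one bounds $d(c_{\beta_k}, h_k'(c_{\beta_k}))$ from below by $d(b_k, h(b_k))$ minus a bounded number of distances, each of which is at most $3$ by Lemma \ref{intersection} since the curves $a'_i, b'_i$ with $i < k$ have controlled intersection with $b_k$. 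The initial choice of $N_0$ was made precisely to absorb all these bounded errors at every stage $1 \le k \le g$, so the induction closes and yields simultaneously that each $h_k$ is pseudo-Anosov, finishing the proof.
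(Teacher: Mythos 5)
Your proposal is correct and is essentially the same as the paper's argument: the paper states Corollary \ref{stabilizeless} as an immediate byproduct of the inductive proof of Theorem \ref{stabilize}, which already establishes that each intermediate $(\Sigma_k, h_k)$ has genus $g+k$ with $h_k$ pseudo-Anosov, maintaining the distance bound $d(c, h_k(c)) > 4^{g-k}N_0$ at each stage. Your re-derivation of the induction, including the distance estimate for $c_{\beta_k}$ via the triangle inequality and Lemma \ref{intersection}, matches the paper's reasoning.
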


\subsection{Stallings twists and mapping class cosets}
\label{stab_stallings_twists}
Let $(\Sigma,h)$ be a genus $g\geq g_0$  open book decomposition of $M$ with $h$ pseudo-Anosov.
In subsection \ref{sec:stab_pA} we showed that
 we can choose a Penner system of curves
$C\subset \Sigma$  that leads to stabilized genus $g+k$ open book decomposition  $(\Sigma_{k}, h_{k})$, for any $1\leq k\leq g$.
Next we argue that the fibers of all these stabilized decompositions support Stallings twists.

\begin{lemma} \label{abelianStallings} 
For  $1\leq k\leq g$, the genus $g+k$ fiber $\Sigma_{k}$ contains at least $k$ non-boundary parallel and non-parallel to each other
Stallings curves.
\end{lemma}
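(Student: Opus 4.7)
The plan is to construct, for each $1 \le i \le k$, a Stallings curve $\gamma_i \subset \Sigma_k$ built from the cores $a'_i$ and $b'_i$ of the Hopf bands added by the positive stabilization along $\alpha_i$ and the negative stabilization along $\beta_i$.

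I would begin by recording the properties of these cores on $\Sigma_k$. Each Hopf band is plumbed in a small region, and its core is an unknot in a local $3$-ball inside $M$; therefore $a'_i$ (resp.\ $b'_i$) bounds an embedded disk $D^+_i$ (resp.\ $D^-_i$) in $M$, and since the $2k$ plumbing regions are pairwise disjoint these $2k$ disks can be chosen mutually disjoint. Because $\Sigma_k$ restricts on each Hopf band to the band itself, the framing of $a'_i$ induced by $\Sigma_k$ agrees with the framing coming from the positive Hopf band, giving $\mathrm{lk}(a'_i,(a'_i)^+) = +1$; similarly $\mathrm{lk}(b'_i,(b'_i)^+) = -1$, while $\mathrm{lk}(a'_i,b'_j) = 0$ in $M$ for all $i,j$ since the corresponding disks are disjoint.

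Next I would define $\gamma_i$ as an oriented band sum of $a'_i$ and $b'_i$ along an arc $\eta_i \subset \Sigma_k$ connecting them, chosen so that for $j \ne i$ the arc $\eta_j$ and the regions carrying $a'_j \cup b'_j$ are disjoint from $\eta_i$. This yields a single simple closed curve on $\Sigma_k$. The same band sum, applied to the disks $D^+_i$ and $D^-_i$ along a rectangular thickening of $\eta_i$ inside $\Sigma_k$, produces an embedded disk $D_i \subset M$ with $\partial D_i = \gamma_i$. The standard framing formula for band sums then gives
\[
\mathrm{lk}(\gamma_i,\gamma_i^+) \;=\; \mathrm{lk}(a'_i,(a'_i)^+) + \mathrm{lk}(b'_i,(b'_i)^+) + 2\,\mathrm{lk}(a'_i,b'_i) \;=\; 1 - 1 + 0 \;=\; 0,
\]
where no extra band-framing term appears because $\eta_i$ lies on $\Sigma_k$ and is pushed off in the same normal direction used to define $\gamma_i^+$.

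Finally I would verify that each $\gamma_i$ is essential and non-peripheral, and that the $\gamma_i$ are pairwise non-parallel. The key observation is that each pair of stabilizations adds a new genus-$1$ handle to $\Sigma$, and $\{a'_i,b'_i\}$ is a symplectic pair on the $i$-th such handle, so the classes $[a'_i]$ and $[b'_i]$ are linearly independent in $H_1(\Sigma_k)$. Consequently $[\gamma_i] = [a'_i] + [b'_i] \neq 0$, whence $\gamma_i$ is essential; since $[\partial \Sigma_k] = 0$ in $H_1(\Sigma_k)$, the curve $\gamma_i$ is also non-peripheral. Moreover $[\gamma_1],\ldots,[\gamma_k]$ are linearly independent, so no two of the $\gamma_i$ can be parallel. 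The main point requiring care is the band-sum framing computation — specifically verifying that the thickening of $\eta_i$ contributes no extra twist to the self-linking, which amounts to checking that $\eta_i$ and its thickening lie entirely inside $\Sigma_k$ and inherit their framing from the fiber.
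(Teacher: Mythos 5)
Your proof is essentially the paper's, with one minor variant. The paper also constructs the Stallings curves as band sums of the disks $E_i, E_i^*$ bounded by the Hopf-band cores $a_i', b_i'$, obtaining curves $t_i = \partial(E_i \#_{\gamma_i} E_{i+1}^*)$ with vanishing self-linking, and then distinguishes them homologically (using $H_1(\Sigma_k, \partial\Sigma_k;\mathbb{Z})$ rather than $H_1(\Sigma_k)$). The only structural difference is the band-sum pairing: you connect $a_i'$ to $b_i'$, whereas the paper connects $a_i'$ to $b_{i+1}'$ cyclically (so that the connecting arcs $\gamma_i$ are easy to draw as disjoint in the one-holed $4g$-gon picture). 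Both pairings give $k$ mutually disjoint curves with the same framing cancellation and the same homological conclusion, so the variant is harmless.

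One small imprecision: you call $\{a_i', b_i'\}$ a ``symplectic pair on the $i$-th handle,'' but the paper notes these cores are chosen \emph{disjoint} on $\Sigma_k$, so they are not a symplectic pair. Linear independence of $[a_i']$ and $[b_i']$ nonetheless holds — each core crosses exactly one of the $2k$ newly attached bands exactly once, so their images in $H_1(\Sigma_k, \Sigma)$ form part of a basis dual to the bands — and the rest of your homological argument (non-zero class $\Rightarrow$ essential; $[\partial\Sigma_k]=0 \Rightarrow$ non-peripheral; distinct classes $\Rightarrow$ non-parallel) is correct. So the conclusion stands; only the justification for linear independence should be phrased in terms of the new bands rather than a symplectic pairing.
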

\begin{proof}  
We will argue that   $k$ disjoint Stallings curves can be obtained on the stabilized  surface $\Sigma_k$ 
by taking appropriate band sums of the core curves of the stabilization bands.  

Note that while the curves $a_i$ and $b_i$ intersect in $\Sigma,$ any two of the stabilization arcs
sets $\{ \alpha_1, \beta_1 \ldots, \alpha_g, \beta_g\}$ are disjoint. Thus the cores of the stabilization bands
 $a_1', b_1' \ldots,a_k', b_k'$ are disjoint on $\Sigma_k,$ and they bound disjoint disks $E_1, E_1^* \ldots, E_k, E_k^*$ each intersecting the surface $\Sigma_k$ transversally.

As shown in Figure \ref{fig:stab_abelian}, we can choose disjoint arcs $\gamma_i,$ where $i=1,\ldots,k,$ so that $\gamma_i$ connects $a_i'$ to $b_{i+1}'$ ($\gamma_k$ connecting $a_k'$ to $b_1'$). Now the disks $D_i=E_i \#_{\gamma_i} E_{i+1}^*$ obtained by band sum along $\gamma_i$ for $i=1,\ldots ,k$ are mutually disjoint, and the boundary curves $t_i=\partial D_i$ satisfy $lk(t_i,t_i^+)=0.$ So the curves $t_1, \ldots, t_k$ are disjoint Stallings curves on $\Sigma_k.$ In $H_1(\Sigma_k,\partial \Sigma_k,\Z)$ they represent distinct non-zero elements, so they are non-parallel and non boundary parallel.

\begin{figure}
\centering
\def \svgwidth{.8\columnwidth}
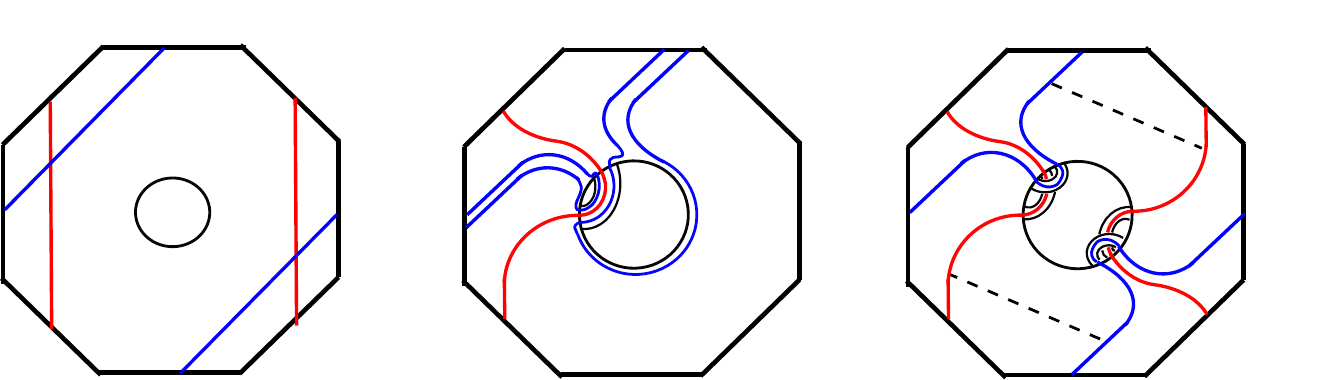
\caption{The surface $\Sigma$ is represented as a one-holed $4g$-gon (with $g=2$ here). From left to right: the Penner set of curves $a_i,b_i$, the curve $c_{\beta_1}$ on the surface stabilized along $\alpha_1,$ and the curves $a_i',b_i'$ and arcs $\gamma_i$ on the fully stabilized surface $\Sigma'.$}
\label{fig:stab_abelian} 
\end{figure}
\end{proof}

\vskip  0.07in
\begin{remark}\label{makedisjoint} Our construction of the Stallings curves in the proof of Lemma \ref{abelianStallings}  guarantees that if we think of the  Stallings curves $t_j$ as lying on different level fibers of
$M_{h_k}= \Sigma_{k}/_{(x,1)\sim ({ {h_k}(x),0)}},$ we can assure that the Stallings disks become disjoint in the  manifold $M$.
\end{remark}

\vskip  0.07in

We are now ready to prove Theorem \ref{thm:coset} stated in the Introduction. The construction of the desired cosets will be  done in two steps. In
Theorem \ref{prop:stab_abelian} below we construct abelian cosets and then in Theorem \ref{prop:stab_free} we deal with the construction of free cosets. As explained earlier,
the constructions become relevant to the AMU conjecture when applied to  q-hyperbolic manifolds.

\begin{theorem}\label{prop:stab_abelian} Let $M$ be a closed, oriented 3-manifold.  There is
 $g_0=g_0(M)$, so that 
 for any $g\geqslant g_0$, and any $1\leq k\leq g$, the following is true:
 Any genus $g$ open-book decomposition $(\Sigma,h)$ of $M$ with connected binding, and pseudo-Anosov $h$, 
 can be stabilized  to a genus $g+k$ decomposition $(\Sigma_k, h_k)$  such that $h_k$ represents a pseudo-Anosov element in $\mathrm{Mod}(\Sigma_{g+k,1})$. 
 Furthermore, there is an abelian subgroup  $A_k<\mathrm{Mod}(\Sigma_{g+k,1})$ such that,
 \begin{enumerate} 
 \item the rank of $A_k$ is $k$;
 \item  every element in the coset $h' A_k$ represents a  pseudo-Anosov mapping class; and
 \item  every element in $h' A_k$ occurs as monodromy  of an open book decomposition of $M.$
 \end{enumerate}
  \end{theorem}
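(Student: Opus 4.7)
The plan is to combine the stabilization technology developed in Section \ref{sec:stabilization} with Theorem \ref{hyperbolic}. The strategy rests on three main ingredients: Corollary \ref{stabilizeless} to produce a pseudo-Anosov open book of genus $g+k$, Lemma \ref{abelianStallings} to supply disjoint Stallings curves on the stabilized page, and Theorem \ref{hyperbolic} to propagate the pseudo-Anosov property across an entire coset.

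First I would set $g_0 := g_0(M)$ as in Corollary \ref{thm:colin-honda}. Given any pseudo-Anosov open book decomposition $(\Sigma, h)$ of $M$ of genus $g \geq g_0$, Corollary \ref{stabilizeless} produces a stabilization $(\Sigma_k, h_k)$ of genus $g+k$ with $h_k$ pseudo-Anosov. Then Lemma \ref{abelianStallings} supplies $k$ mutually disjoint, pairwise non-parallel, non-boundary-parallel Stallings curves $t_1, \ldots, t_k \subset \Sigma_k$; by Remark \ref{makedisjoint} their bounding disks can be taken to be pairwise disjoint inside $M$. Since a Stallings twist along any $t_i$ leaves the ambient $3$-manifold unchanged, for every tuple $\mathbf{n} = (n_1, \ldots, n_k) \in \mathbb{Z}^k$ the mapping class
\[ f_{\mathbf{n}} := h_k \circ \tau_{t_1}^{n_1} \circ \cdots \circ \tau_{t_k}^{n_k} \]
is the monodromy of an open book decomposition of $M$.

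I would then set $h' := h_k$ and $A_k := \langle \tau_{t_1}^N, \ldots, \tau_{t_k}^N \rangle$ for an integer $N$ to be determined. Since the $t_i$ are disjoint and pairwise non-parallel, $A_k$ is free abelian of exact rank $k$, so condition (1) holds. Every element of $h' A_k$ equals $f_{(Na_1, \ldots, Na_k)}$ for some $\mathbf{a} \in \mathbb{Z}^k$, so condition (3) is automatic. To establish condition (2), I would apply Theorem \ref{hyperbolic} once to each non-empty subset $S \subseteq \{1, \ldots, k\}$, using the Stallings curves $\{t_i\}_{i \in S}$, to obtain a threshold $n_S$; choosing $N > \max_{S \neq \emptyset} n_S$, for any $\mathbf{a} \in \mathbb{Z}^k$ let $S := \{i : a_i \neq 0\}$. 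Then the exponents $Na_i$ with $i \in S$ all exceed $n_S$ in absolute value, so Theorem \ref{hyperbolic} gives that $f_{(Na_1, \ldots, Na_k)}$ is pseudo-Anosov, while the empty-$S$ case reduces to $h_k$, which is pseudo-Anosov by construction.

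The principal obstacle is verifying the intersection hypothesis $i(h_k(t_i), t_j) \neq 0$ for all pairs $i, j$ in each subset $S$, which is used in the proof of Theorem \ref{hyperbolic} to ensure that the drilled mapping torus is hyperbolic. I expect to handle this by exploiting the quantitative curve-graph distance control built into Theorem \ref{stabilize}: after selecting the Penner system so that every curve $c$ in it satisfies $d(c, h_k(c)) \gg 0$, tracing the band-sum construction of Lemma \ref{abelianStallings} and applying Lemma \ref{intersection} should give $d(h_k(t_i), t_j) \geq 2$ for all pairs $i, j$, which forces $i(h_k(t_i), t_j) \neq 0$. Should a direct estimate fall short, I can instead precompose $h_k$ with a large power $\tau_{t_{i_0}}^M$ of a Dehn twist along one of the Stallings curves -- still a pseudo-Anosov monodromy of an open book of $M$ by the $k=1$ case of Theorem \ref{hyperbolic} -- which moves the images of the remaining $t_j$ into the required position without altering $M$. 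Once the intersection hypothesis is secured, the coset construction outlined above delivers the theorem.
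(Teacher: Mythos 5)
Your proof follows essentially the same path as the paper's: start with Corollary \ref{thm:colin-honda} to get $g_0$, use Corollary \ref{stabilizeless} to produce the genus-$(g+k)$ pseudo-Anosov decomposition $(\Sigma_k, h_k)$, use Lemma \ref{abelianStallings} and Remark \ref{makedisjoint} to obtain $k$ disjoint, pairwise non-parallel Stallings curves with disjoint disks, verify the intersection hypothesis via curve-graph distances (Lemma \ref{intersection}, Lemma \ref{fill} and the distance bound from Theorem \ref{stabilize}), and then invoke Theorem \ref{hyperbolic}. That is exactly the paper's proof. Moreover, you correctly noticed something the paper's published argument elides: a single application of Theorem \ref{hyperbolic} only certifies elements $h_k \circ \tau_{t_1}^{n_1}\cdots\tau_{t_k}^{n_k}$ in which \emph{all} exponents are nonzero and large, whereas a general element of $h'A_k$ has some exponents equal to zero; your remedy of applying Theorem \ref{hyperbolic} once for each nonempty subset $S\subseteq\{1,\dots,k\}$ and taking $N$ larger than the maximum of the resulting thresholds $n_S$ is the right way to close this gap, since the claim $i_{\Sigma_k}(h_k(t_i),t_j)\neq 0$ and the non-parallelism hold for every pair, hence for every subset.

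One small caveat: your proposed fallback for the intersection hypothesis --- precomposing $h_k$ with $\tau_{t_{i_0}}^M$ --- would not actually move the images $h_k(t_j)$ for $j\neq i_0$, because $\tau_{t_{i_0}}$ fixes each disjoint curve $t_j$, so $(h_k\circ\tau_{t_{i_0}}^M)(t_j)=h_k(t_j)$. Fortunately this fallback is unnecessary: the curve-graph estimate you describe (and which the paper carries out in its ``Claim'') gives $d(t_i,h_k(t_i))\gg 0$ for all $i$, and since $i_{\Sigma_k}(t_i,t_j)=0$, Lemma \ref{fill} forces $d(t_j,h_k(t_i))\geq 2$, i.e.\ $i_{\Sigma_k}(h_k(t_i),t_j)\neq 0$.
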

\begin{proof}  Let $g_0$ be the genus guaranteed by Corollary  \ref{thm:colin-honda}, which can be taken to be the smallest number that occurs as genus of an open-book decomposition in $M$
with connected binding and pseudo-Anosov monodromy.
Now for any $g\geqslant g_0,$  there is an open book decomposition $(\Sigma,h)$ of $M,$ such that $\partial \Sigma$ is connected, $\Sigma$ has genus $g,$ and $h$ is pseudo-Anosov.
Now fix $g\geqslant g_0$ and apply  Theorem \ref{stabilize} and Corollary \ref{stabilizeless}  to  $(\Sigma, h)$. For any $1\leq k\leq g$,  we get $(\Sigma_k, h_k)$ of genus $g+k$ and $h_k$   pseudo-Anosov.
By Lemma  \ref{abelianStallings},  $\Sigma_{k}$ contains   a collection  $T=\{t_1, \ldots, t_k\}$ of disjoint Stallings curves. 
\vskip 0.07in

{\it {Claim.}} For any $t_i, t_j\in T$, we have $i_{\Sigma_k} (h_k(t_i), \ t_j)\neq 0$.

\vskip 0.07in

{\it {Proof Claim.}}  Let $C$ denote the Penner set of curves  that comes from the application of Theorem \ref{stabilize} and Corollary \ref{stabilizeless}.
By the argument  in the proof of Theorem \ref{stabilize} we conclude that $d(c, \ h_k(c))\gg0$, for any  $c\in C$.
 By the construction of the curves in $T$,  for any $t\in T$ and  $c\in C$ we have $i_{\Sigma_k}(t, \  c)\leqslant 1$. Hence,  by Lemma \ref{intersection} , we have $d(h_k(t), \  h_k(c))=d(t ,  c)\leqslant 3$ in the 
 curve graph of $\Sigma_k$.
 By triangle inequalities in this curve complex we get $$d(t, \  h_k(t)\geqslant d(c , \  h_k(c))-6\gg0.$$
 Since $i_{\Sigma_k}(t_i, t_j)=0$, for $t_i\neq t_j\in T$, we can now apply Lemma \ref{fill} to conclude that  $i_{\Sigma_k} (h_k(t_i), \ t_j)\neq 0$, which finishes the proof of the claim.
\vskip 0.07in

  Now we are in a situation where  Theorem \ref{hyperbolic} applies. Note that after putting the curves in $T$ at different level fibers in $M_{h_k}$, we can assure that the Stallings disks bounded by them are disjoint,
  as we noted in Remark \ref{makedisjoint}. 
 We apply Theorem \ref{hyperbolic} to conclude that
 there is $n\in \NN$ such that for all
 $(n_1, \ldots, n_k)\in {\Z}^k$ with all $|n_i|>n,$ the map
 $h_k \circ \tau^{n_1}_{t_1} \circ \ldots \circ \tau^{n_k}_{t_k}$ is pseudo-Anosov. Furthermore, the family 
 
 $$\{h_k\circ \tau^{n_1}_{t_1} \circ  \ldots \circ \tau^{n_k}_{t_k} , \ \ {\rm with} \ \ {(n_1, \ldots, n_k)\in {\Z}^k} \ \ {\rm and } \  \ |n_i|>n\}$$
 contains infinitely many pairwise  independent pseudo-Anosov mapping classes. Now taking $A_k$ to be the free abelian group generated by  $\tau^{n_i}_{t_i}$ for some $|n_i|>n$ we are done.
\end{proof}

Next we construct the free cosets promised earlier.

\begin{theorem}\label{prop:stab_free} Given a closed oriented 3-manifold $M$ and
 $g_0=g_0(M)$ as in Theorem \ref{prop:stab_abelian}, the following is true:
 For $g\geqslant \mathrm{max}(g_0,4)$,
any genus $g$ open-book decomposition $(\Sigma,h)$ of $M$, with connected binding, can be stabilized to a genus $g+4$ open book decomposition $(\Sigma', h')$ such that 
\begin{enumerate}
\item $h'$ represents a pseudo-Anosov mapping class in $\mathrm{Mod}(\Sigma_{g+4,1})$; and
\item there is a rank two free subgroup  $F<\mathrm{Mod}(\Sigma_{g+4,1})$  such that 
 every element in the coset $h' F$ occurs as monodromy  of an open book decomposition of $M.$
 \end{enumerate}
 \end{theorem}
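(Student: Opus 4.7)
The strategy is to combine the stabilization construction of Theorem \ref{stabilize} with the Hamidi-Tehrani free-group criterion (Theorem \ref{Hamidi-Tehrani}). First, after replacing $(\Sigma,h)$, if necessary, by the genus $g$ pseudo-Anosov open book decomposition of $M$ guaranteed by Corollary \ref{thm:colin-honda}, I would apply Corollary \ref{stabilizeless} with $k = 4$ to a suitably chosen Penner system of curves, obtaining a genus $g+4$ open book decomposition $(\Sigma', h')$ of $M$ with $h'$ pseudo-Anosov. This immediately yields conclusion (1).

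The key construction is a choice of two Stallings curves $t_1, t_2$ on $\Sigma'$ satisfying $i_{\Sigma'}(t_1, t_2) \geqslant 2$. The four pairs of stabilizations yield eight Hopf-band disks $E_1, E_1^*, \ldots, E_4, E_4^*$ in $M$, whose cores $a_1', b_1', \ldots, a_4', b_4'$ are disjoint curves on $\Sigma'$. Unlike the disjoint band arcs used in Lemma \ref{abelianStallings}, I would choose band arcs that cross. Concretely, in the polygonal model of $\Sigma'$ (compare Figure \ref{fig:stab_abelian}), one picks an arc $\gamma_1$ joining $a_1'$ to $b_3'$ and an arc $\gamma_2$ joining $a_2'$ to $b_4'$ so that $\gamma_1$ and $\gamma_2$ cross transversely once, and then sets
\[
t_1 \;:=\; E_1 \#_{\gamma_1} E_3^*, \qquad t_2 \;:=\; E_2 \#_{\gamma_2} E_4^*.
\]
Since each band sum runs along its connecting arc twice (on either side), the single crossing of $\gamma_1$ with $\gamma_2$ produces two transverse intersections of $t_1$ with $t_2$, so $i_{\Sigma'}(t_1, t_2) = 2$. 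Moreover, band-summing disks bounded by zero-framed unknots along arcs on $\Sigma'$ produces a disk bounded by a zero-framed unknot, so $t_1$ and $t_2$ are genuine Stallings curves, and both are homotopically essential and non boundary-parallel by the homological argument already used in the proof of Lemma \ref{abelianStallings}.

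Setting $F := \langle \tau_{t_1}, \tau_{t_2} \rangle$, Theorem \ref{Hamidi-Tehrani} implies that $F$ is free of rank two. For conclusion (2), any element of $h' F$ has the form $h' \circ w$ for some reduced word $w$ in $\tau_{t_1}^{\pm 1}, \tau_{t_2}^{\pm 1}$. Reading $w$ left to right, each successive letter is realized as a Stallings twist performed on a copy of the appropriate $t_i$ placed at a distinct level of the mapping torus $M_{h'}$, as in Remark \ref{makedisjoint}; since each individual Stallings twist leaves the ambient manifold $M$ unchanged, the composition $h' \circ w$ arises as the monodromy of an open book decomposition of $M$ with fiber $\Sigma'$.

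The main obstacle is the geometric step producing $t_1$ and $t_2$: one must simultaneously verify that (i) the band arcs $\gamma_1, \gamma_2$ can be drawn on $\Sigma'$ so as to cross transversely while remaining properly embedded and avoiding the other Hopf bands, (ii) the resulting band-summed curves are embedded and essential on $\Sigma'$ (the homological argument of Lemma \ref{abelianStallings} handles essentiality), and (iii) their Seifert framings inherited from $\Sigma'$ agree with the zero framings of the Hopf-band cores so that $lk(t_i, t_i^+) = 0$. Point (iii) is a routine band-sum computation, but producing an unambiguous picture of the crossings in the polygonal model of $\Sigma'$ and checking (i) and (ii) rigorously requires careful bookkeeping of the stabilization bands.
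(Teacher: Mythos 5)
Your proposal follows essentially the same route as the paper's proof: stabilize via Corollary \ref{stabilizeless} with $k=4$, form two Stallings curves as band sums of Hopf-band disks along arcs that cross once, invoke Theorem \ref{Hamidi-Tehrani} to get a free group, and realize words in $h'F$ by spreading copies of the Stallings curves over distinct level fibers of $M_{h'}$. One small slip: a single transverse crossing of the band arcs $\gamma_1,\gamma_2$ gives $i_{\Sigma'}(t_1,t_2)=4$, not $2$ (each of the two boundary strands of one band meets each of the two boundary strands of the other), but since $4\geqslant 2$ this does not affect the application of Hamidi-Tehrani, and the geometric bookkeeping you flag as the remaining obstacle is handled in the paper by an explicit picture (Figure \ref{fig:stab_free}).
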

  \begin{remark} Note that a pseudo-Anosov rank two free coset always contains infinitely many independent elements by Theorem \ref{hyperbolic}, as it always contains (many) pseudo-Anosov rank one Abelian cosets in particular. Actually, repeated application of Theorem \ref{hyperbolic} implies that "most"  elements, in the sense of Corollary \ref{more} and its proof, in the free coset are pseudo-Anosov.
 \end{remark}

\begin{proof} Recall that $g_0$ is the genus guaranteed by Corollary  \ref{thm:colin-honda}. Now fix  $g\geqslant \mathrm{max}(g_0,4),$
and apply  Corollary \ref{stabilizeless}  to obtain a genus $g+4$ open book decomposition $(\Sigma', h')$ with $h'$ pseudo-Anosov.
As in the proof of Lemma \ref{abelianStallings} , let $a_i', b'_i$, for $i=1, \ldots, 4$ denote the cores of the stabilization bands from $\Sigma$ to $\Sigma'$ which
 bound mutually disjoint disks $E_i, E^{*}_i$,
each intersecting the surface $\Sigma'$ transversally. Now form the band sums $D_a=E_1\#_{\gamma} E^{*}_{3}$ and $D_b=E_2\#_{\delta} E^{*}_{4}$
along  arcs $\gamma, \delta$ that intersect exactly once and run from $a_1$ to $b_3$ and from $a_2$ to $b_2$, respectively. The boundary curves
$a=\partial D_a$ and $b=\partial D_b$ are Stallings curves with their Stallings disks intersecting in a square on $\Sigma'.$ 
The intersection number of $a$ and $b$ is four.  See Figure  \ref{fig:stab_free}.

\begin{figure}
\centering
\def \svgwidth{.36\columnwidth}
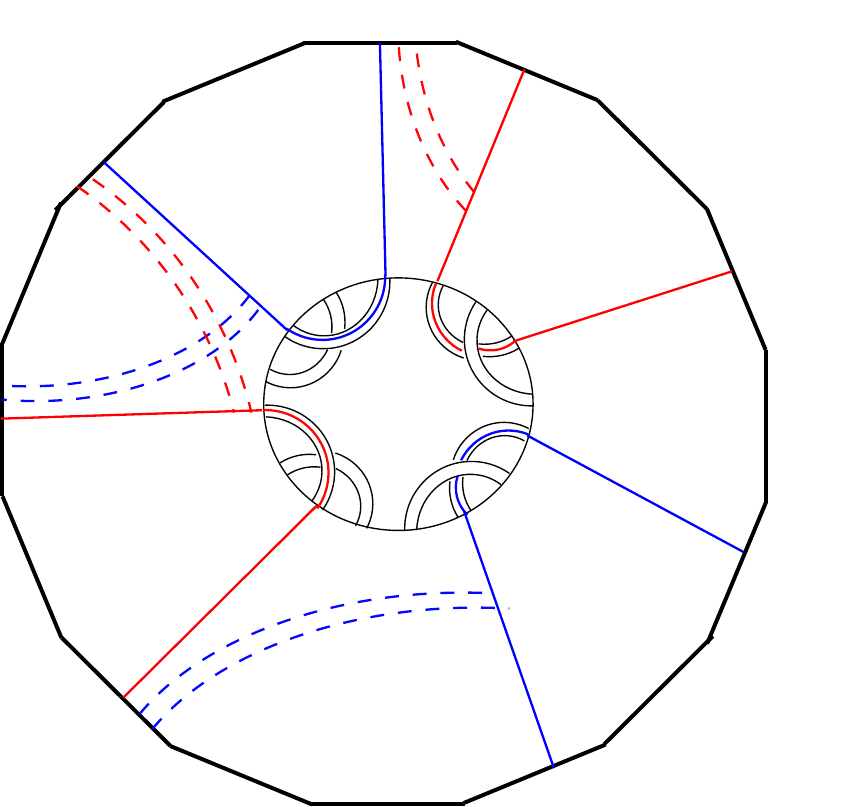
\caption{The choice of the arcs $\gamma, \delta$ illustrated in the case when $\Sigma$ has genus $4.$}
\label{fig:stab_free}
\end{figure}

Since $a,b$ have intersection four, the subgroup of  $\mathrm{Mod}(\Sigma)$ generated by Dehn twists on $a,b$ is a free group of rank 2. This is by  Theorem \ref{Hamidi-Tehrani}.
We claim that every  word $w\in F$ corresponds to an open book decomposition of $M$.
Indeed we have $w=\tau_a^{m_1}\tau_b^{n_1}\ldots \tau_a^{m_k}\tau_b^{n_k}$, for some $n_i, m_i\in \Z$. Take $k$ copies of $a\cup b$ and place then on different level fibers  of the open book decomposition as in the proof of Theorem \ref{hyperbolic}. Now for each of these $k$ copies place the copy of $b$ on a different fiber than the copy of $a$ that is slightly above the copy of $a$ and below the next copy of $a$. We can put the copies of the Stalllings disks now so that all the $2k$ curves bound disjoint disks. Since the open book to which   $fw=f\tau_a^{m_1}\tau_b^{n_1}\ldots \tau_a^{m_k}\tau_b^{n_k}$
corresponds is obtained by surgery of along the curves above, and since these curves bound disjoint disks the result follows. 
\end{proof}

Now we explain how to obtain Theorem \ref{thm:coset} stated in the introduction: For $g\geqslant \mathrm{max}(2g_0,4),$
Theorem  \ref{prop:stab_abelian} gives an  abelian coset of rank $\lfloor \frac{g}{2}\rfloor.$ and 
Theorems \ref{prop:stab_free} give a free coset. Thus for $g_1=\mathrm{max}(2g_0,4),$ we have 
Theorem  \ref{thm:coset}.


\section{Fibered knots in the Dehn fillings of the figure-eight knot}
\label{sec:fig8}
In this section, for $p\in \Z,$ let $4_1(p)$ be the $3$-manifold obtained by $p$-surgery on the figure eight knot. For $g$ a large enough integer, we will produce infinite families of hyperbolic fibered knots in some manifolds $4_1(p)$ with $|p|\geqslant 5$ that have fiber surface $\Sigma_{g,1}.$ Using  Theorems \ref{thm:q-hyperbolic} and \ref{tool}  we will assure that the  monodromies of these knots satisfy  the AMU conjecture and they give infinite families of pseudo-Anosov mapping classes that are independent in the sense 
defined in Section \ref{sec:independance}.
\subsection{A family of two-component hyperbolic fibered links in $S^3$}
\label{sec:family_links}
We start by considering  the family of two-component links $\{L_{l,m,k}\}_{l,m,k\geqslant 1}$ shown  in Figure \ref{fig:fig8example}.  As illustrated in the figure,
a box labeled $n$ denotes $n$ successive positive crossings between the two strings of the link the box involves. Similarly,   a double box labeled $k$ corresponds
 to stacking $k$-copies of
 pattern with two positive and two negative crossings on the corresponding  three strands  of the link.

\begin{figure}[!h]
\centering
\def \svgwidth{.6\columnwidth}
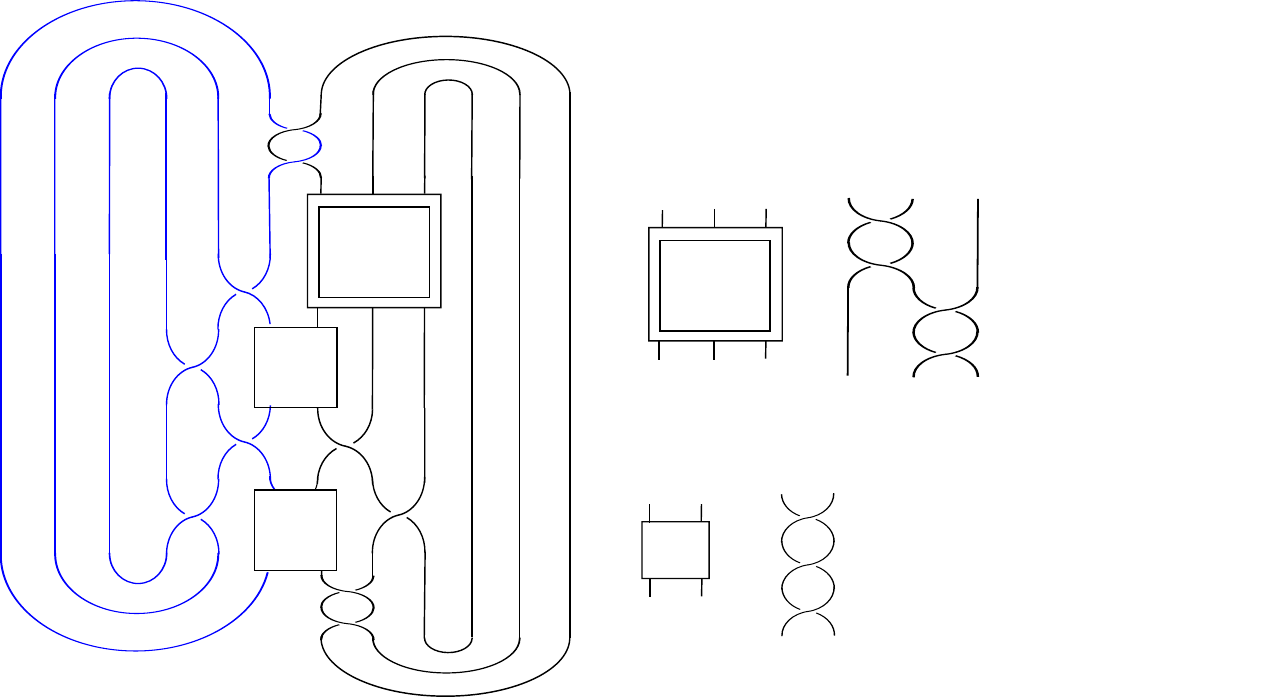
\caption{The links $L_{l,m,k}$.}
\label{fig:fig8example}
\end{figure}
The two-component link $L_{l,m,k}$ is the closure of an alternating braid. Furthermore, one of its two components, shown in blue in Figure \ref{fig:fig8example}, is a figure-eight knot.

We have the following.
\begin{lemma} \label{hyp}Let $l,m,k \geqslant 1.$
Then the complement of
 $L_{l,m,k}$ is hyperbolic and  fibers with fiber of genus $g(l,m,k)=2+m+l+2k$.
\end{lemma}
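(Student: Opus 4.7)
\smallskip

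\textbf{Plan.} The proof naturally splits into two independent parts: the hyperbolicity of the complement, and the fibration together with the genus computation.

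For hyperbolicity, I would observe that the diagram of $L_{l,m,k}$ shown in Figure \ref{fig:fig8example} is alternating, being the closure of an alternating braid. For every $l, m, k \geqslant 1$ the diagram is visibly reduced (no nugatory crossings) and prime. Furthermore, $L_{l,m,k}$ is not a $(2,n)$--torus link since its braid involves three strands interacting non-trivially through the double box, in addition to the two independent twist regions of sizes $2l$ and $2m$. Menasco's hyperbolization theorem for prime alternating links then yields that $S^3 \setminus L_{l,m,k}$ is hyperbolic.

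For the fibration, the plan is to exhibit a fiber surface $F$ as an iterated Hopf-band plumbing onto the fiber surface $F_0$ of the figure-eight knot. The blue component of $L_{l,m,k}$ is a figure-eight knot, so its standard once-punctured torus fiber $F_0$ (with $\chi(F_0)=-1$) is a natural starting point. Each twist region of $2l$ or $2m$ crossings contributes plumbing of Hopf bands along parallel arcs, while each of the $k$ copies of the double-box pattern contributes a plumbing of several Hopf bands of mixed signs. By Stallings' theorem, extended by Gabai to Murasugi sums of fiber surfaces, the resulting plumbed surface $F$ is a fiber surface for $L_{l,m,k}$. To compute the genus, I track Euler characteristics: each Hopf-band plumbing decreases $\chi$ by $1$, and totaling the contributions from the three boxes yields $\chi(F) = -4 - 2m - 2l - 4k$. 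Since $F$ has two boundary components, one on each link component of $L_{l,m,k}$, this gives $g(F) = 2 + m + l + 2k$.

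The main technical obstacle will be correctly identifying the Hopf-plumbing decomposition associated with the double-box pattern, which mixes positive and negative crossings and does not reduce to a standard twist region. An alternative route avoiding this identification is to appeal to the fact that an alternating link fibers if and only if its Alexander polynomial is monic; the polynomial can be computed from the braid word representing $L_{l,m,k}$, and its degree then directly gives the genus of the fiber via the fact that, for alternating links, the Seifert genus agrees with half the degree of the Alexander polynomial.
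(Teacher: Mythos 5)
Your hyperbolicity argument coincides with the paper's: both invoke Menasco's theorem on the prime, non-split, reduced alternating diagram and note that $L_{l,m,k}$ is not a $(2,n)$-torus link. For the fibration and genus computation, however, the paper takes a more direct route than what you propose. It applies Stallings' theorem on homogeneous (in particular alternating) braid closures to the Bennequin surface, i.e.\ the Seifert surface obtained by Seifert's algorithm from the $6$-strand braid: this surface is a fiber, and its Euler characteristic is simply $n(\beta)-c(\beta)=6-(10+2m+2l+4k)$, which immediately gives $g=2+m+l+2k$ after noting the surface has two boundary components. Your first route, viewing the fiber as an iterated Hopf-band plumbing onto the figure-eight fiber $F_0$, is workable in principle (Bennequin surfaces of homogeneous braids are Murasugi sums of Hopf bands), but it is not the natural decomposition here: the Bennequin surface is built from six disks joined by twisted bands, not by plumbing onto the two-band fiber of the figure-eight knot, and as you yourself note the double-box blocks do not reduce to a single twist region; you also need to track which plumbing changes the number of boundary components from one to two. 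Your alternative via monicity of the Alexander polynomial (and the Murasugi--Crowell genus formula for alternating links) is also valid, but adds computational overhead without any gain. So you have identified the right theorems, but the count is done more cleanly and without gaps by directly counting disks and bands in the Bennequin surface, as the paper does.
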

\begin{proof} By \cite{Stallings}, any alternating braid closure (actually, any homogeneous braid closure) is fibered. Thus the link $L_{l,m,k}$ is fibered. Moreover, one can concretely describe how a fiber surface is built: first take a disk for each strand of the braid (so, as many as the index of the braid) and connect them by twisted bands at crossings, where the orientation of the twisted bands agrees with the signs of the crossings. 

By this description, one can compute the Euler characteristic of the fiber surface $F_{l,m,k}$ of the closure of an homogeneous braid $\hat{\beta}$ by the formula $\chi(F_{l,m,k})=n(\beta)-c(\beta)$ where $n(\beta)$ and $c(\beta)$ are the braid index and number of crossings of $\beta.$ In the case of the $2$-component link $L_{l,m,k}$ of the figure, we get $$\chi(\Sigma)=6-(10+2m+2l+4k)=-4-2m-2l-4k.$$
 As $F_{l,m,k}$ has two boundary components, its genus is 
$$g(l,m,k)=-\chi(F_{l,m,k})/2=2+m+l+2k.$$

By a theorem of Menasco \cite{menasco:primediagram}, any prime non-split alternating diagram of a link that is not the standard diagram of the $T_{2,q}$ torus link represents a hyperbolic link. Here the diagram of $L_{l,m,k}$ in the figure is prime and non-split as long as both $l,m$ and $k$ are at least $1.$ \end{proof}

\begin{figure}
\centering
\def \svgwidth{.4\columnwidth}
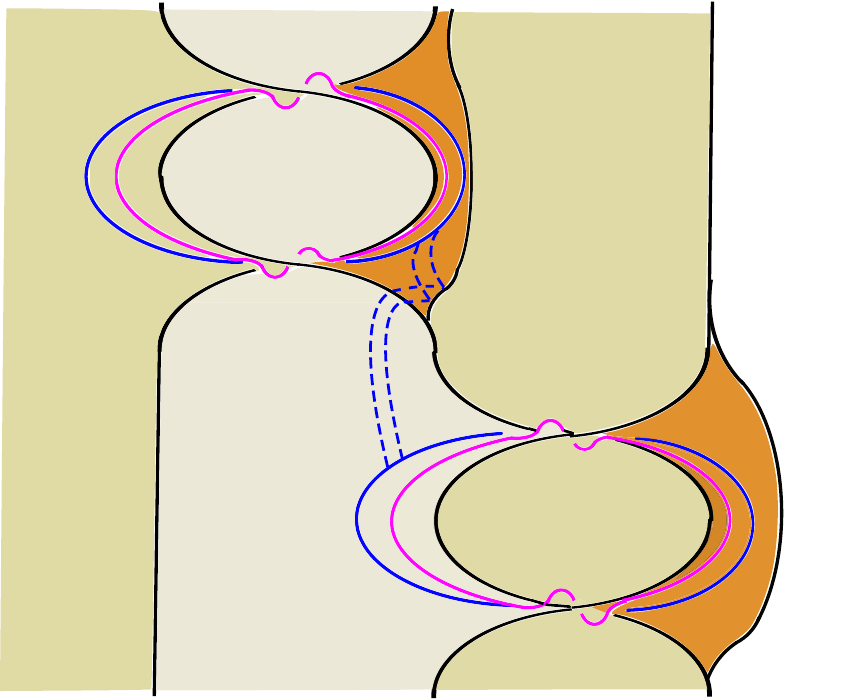
\caption{Creating Stallings twist curves on the fiber  $F_{l,m,k}$.}
\label{fig:Stallingstwist}
\end{figure}

\begin{lemma}\label{supportStallings} Let $L_{l,m,k}$ be a fibered hyperbolic link as in lemma \ref{hyp}. The fiber $F_{l,m,k}$ contains  $k$ disjoint, pairwise non-parallel and boundary non-parallel Stallings curves.
\end{lemma}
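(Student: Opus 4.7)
The plan is to exhibit, for each of the $k$ copies of the double-box pattern (consisting of two positive followed by two negative crossings on three strands), a simple closed curve $c_i$ on the fiber surface $F_{l,m,k}$ that bounds an obvious embedded disk in $S^3$ and has self-linking zero.

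First I would recall the standard description of the fiber surface of a homogeneous braid closure used in the proof of Lemma \ref{hyp}: one takes one horizontal disk per braid strand and glues a twisted band at each crossing, with the sign of the twist matching the sign of the crossing. In each copy of the double-box pattern, four such twisted bands appear, two positive and two negative, and they link the three strands of the pattern in a very symmetric way. I would then define $c_i$ as indicated in Figure \ref{fig:Stallingstwist}: a simple closed curve on $F_{l,m,k}$ that lives entirely inside the $i$-th copy of the double-box block, traversing the two $+1$ bands and the two $-1$ bands of that block (the curves $c_1, c_2$ and their pushoffs $c_1^+, c_2^+$ in the figure provide the local model, which is then repeated inside each of the $k$ blocks).

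Next I would check the three properties. For the Stallings condition $lk(c_i, c_i^+) = 0$, the pushoff $c_i^+$ in the normal direction to the fiber picks up one full positive twist for each positive band it crosses and one full negative twist for each negative band. Since $c_i$ runs through exactly two $+$-bands and two $-$-bands of the local block and through no other twisted bands outside the block, the total writhe contribution to $lk(c_i, c_i^+)$ is $(+1)+(+1)+(-1)+(-1) = 0$; since $c_i$ visibly bounds an embedded disk in $S^3$ (the projection of $c_i$ encloses a planar region that is disjoint from the rest of the link diagram, hence lifts to an embedded disk after small perturbation off the fiber), this self-linking calculation is exactly the surface framing minus the Seifert framing, and hence equals zero. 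Disjointness of $c_1, \ldots, c_k$ is immediate from the fact that the $k$ blocks are stacked and mutually disjoint on $F_{l,m,k}$.

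Finally, to show essentiality, pairwise non-parallelism, and boundary non-parallelism, I would work in $H_1(F_{l,m,k}, \partial F_{l,m,k}; \Z)$: the curves $c_i$ can be seen (from the local structure) to represent distinct nonzero classes, because each $c_i$ has nontrivial algebraic intersection with a transverse arc running vertically through the $i$-th block only. This rules out any $c_i$ being null-homologous (so none is boundary-parallel, as boundary-parallel curves in $F_{l,m,k}$ are boundary components which are distinguished classes) and rules out $c_i$ being parallel to $c_j$ for $i\neq j$. The main step to be careful about is the self-linking computation, since one must verify that the cancellation of the $\pm 1$ contributions is genuine and that no additional twisting is introduced when $c_i^+$ travels between the bands; this reduces to the local picture in Figure \ref{fig:Stallingstwist} and is the only place where the specific double-box pattern (as opposed to just any block of crossings) is used.
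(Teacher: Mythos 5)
Your proof follows essentially the same route as the paper's: one Stallings curve per block, constructed so it runs through the two positive and the two negative half-twisted bands of that block (the paper realizes it explicitly as the band sum $c = \partial(E_1 \#_\gamma E_2)$ of the two small curves $c_1,c_2$ in Figure~\ref{fig:Stallingstwist}), with the self-linking contributions of the $+$ and $-$ bands cancelling, disjointness coming from the blocks being stacked, and non-parallelism detected homologically via intersection with co-cores of the bands. So the structure is right. Two places in the write-up are imprecise, though neither affects the conclusion here:
\begin{enumerate}
\item The per-band contribution to $lk(c,c^+)$ is $\pm \tfrac12$, not $\pm 1$: a single half-twisted band contributes a half twist to the surface framing. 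The paper's statement that $lk(c_1,c_1^+)=-1$ for the curve encircling the \emph{two} negative crossings is the check: two bands $\Rightarrow$ one full twist. Your bookkeeping gives the wrong value on any curve through an odd number of same-sign bands, even though the total is still $0$ in the case at hand.
\item The justification that $c$ ``visibly bounds an embedded disk in $S^3$ (the projection of $c$ encloses a planar region disjoint from the rest of the link diagram)'' is not accurate: $c$ passes through twisted bands, so it is not a planar circle, and the enclosed region contains strands and crossings of the link. What actually makes the Stallings disk evident is the paper's description: each of $c_1,c_2$ bounds a small obvious disk $E_1,E_2$ transverse to the fiber, and these are joined by a band to give an embedded disk $E_1 \#_\gamma E_2$ with boundary $c$. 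You should phrase the existence of the disk that way rather than via a planarity claim.
\end{enumerate}
The homological argument you sketch for pairwise non-parallelism and non-boundary-parallelism is in line with the paper's (intersection with co-cores of the bands in each block, plus the observation that boundary components meet those co-cores an even number of times), and is fine once one chooses the transverse arcs to be the co-cores themselves.
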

\begin{proof} Recall that we have $k$ blocks each of which contains a 3-braid with two positive and two negative crossings as illustrated in Figure \ref{fig:fig8example}. From each of these blocks we obtain a Stallings curve as follows: Consider a curve $c_1\subset F_{l,m,k}$ encircling the two negative crossings of the block and intersecting each co-core of the corresponding half-twisted bands exactly once; similarly consider
a curve $c_2\subset F_{l,m,k}$ encircling the two positive crossings. The curve $c_i$ ($i=1,2$) bounds an embedded disc $E_i$ in $S^3$ that intersects the $F_{l,m,k}$ transversally and we have $lk(c_1,c_1^+)=-1,$ 
while $lk(c_2,c_2^+)=1.$ For each block, we therefore construct a Stallings curve by connecting the curves $c_1,c_2$ by a band as shown in Figure \ref{fig:Stallingstwist}. The disk bounded by this curve $c$ is the band sum of $c_1$ and $c_2,$ and the curve satisfies $lk(c,c^+)=0.$ Note that the Stalligns curves corresponding to different block are disjoint, as they are supported on disjoint subsurfaces of $\Sigma:$ the subsurfaces neighboring the two positive and negative crossings of the block as in Figure \ref{fig:Stallingstwist}. 

Note that Stallings curves corresponding to different blocks are non-parallel, since each curve intersects the co-cores of the half-twisted bands of its block exactly once and  is disjoint from the  bands outside the block.
Furthermore, none of Stallings curves can be parallel to a component  of $\partial F_{l,m,k}$  since each of the two boundary components   intersects the co-cores of every half-twisted band in the $k$-blocks  an even number of times.
\end{proof}

\subsection{Abelian cosets from $S^3$} We now prove the following theorem which, in particular, for $m=4, l=1$ implies Theorem \ref{thm:fig8example} stated in the Introduction.

\begin{theorem}\label{thm:fig8examplegeneral}
Let $m,l, k\geqslant 1,$  with  $l+m\geqslant 5.$ In the mapping class group $\mathrm{Mod}(\Sigma_{2+m+l+2k,1}),$ there is a pseudo-Anosov abelian elementary coset of rank $k$ consisting only of mapping classes that satisfy the AMU conjecture. 
\end{theorem}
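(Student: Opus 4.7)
The plan is to take the fibered link $L_{l,m,k}\subset S^3$ from Section 5.1 and Dehn-fill its figure-eight component along the slope of $\partial F_{l,m,k}$ on that component, capping off that boundary circle with a disk and producing a fibered knot $K\subset 4_1(p)$ with fiber $\Sigma_{g,1}$, $g=2+l+m+2k$, and some monodromy $h\in\mathrm{Mod}(\Sigma_{g,1})$. The hypothesis $l+m\geq 5$ is designed so that the resulting filling slope is an integer $p$ with $|p|\geq 5$, which by parts (2) and (5) of Theorem \ref{thm:q-hyperbolic} makes both $4_1(p)$ and its drilling $4_1(p)\setminus K$ q-hyperbolic. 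Combining hyperbolicity of $S^3\setminus L_{l,m,k}$ (Lemma \ref{hyp}) with Thurston's hyperbolic Dehn filling theorem yields hyperbolicity of $4_1(p)\setminus K$ for $l+m$ sufficiently large, so $h$ is pseudo-Anosov.

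To produce the abelian coset, I would use the $k$ disjoint, pairwise non-parallel, non-boundary-parallel Stallings curves $c_1,\ldots, c_k$ in $F_{l,m,k}$ from Lemma \ref{supportStallings}; they lie in the interior of the fiber and descend to curves in $\Sigma_{g,1}$ after capping, and their Stallings disks remain embedded and pairwise disjoint in $4_1(p)$ (the Dehn filling is performed in a solid-torus neighborhood of the figure-eight component, disjoint from the small balls containing these disks, up to isotopy in different level fibers as in Remark \ref{makedisjoint}). Let $A_k:=\langle \tau_{c_1},\ldots,\tau_{c_k}\rangle<\mathrm{Mod}(\Sigma_{g,1})$, which is free abelian of rank $k$. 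Since Stallings twists do not change the ambient $3$-manifold, every element $h\tau_{c_1}^{n_1}\cdots\tau_{c_k}^{n_k}$ of the coset $hA_k$ is realized as the monodromy of a fibered knot in $4_1(p)$, and Theorem \ref{tool} then guarantees that every element of $hA_k$ satisfies the AMU conjecture. Applying Theorem \ref{hyperbolic} to the pseudo-Anosov $h$ and the curves $c_1,\ldots,c_k$ -- after verifying $i(h(c_i),c_j)\neq 0$, which can be forced by the curve-graph distance argument used in the proof of Theorem \ref{prop:stab_abelian} (using Lemma \ref{fill} and the pseudo-Anosov action of $h$) -- produces infinitely many pseudo-Anosov elements in $hA_k$, so that $hA_k$ is a pseudo-Anosov elementary coset.

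The main obstacle, and essentially the only non-routine step, is the slope computation. One must verify directly from the braid presentation of $L_{l,m,k}$ (Figure \ref{fig:fig8example}) and the Stallings-algorithm description of $F_{l,m,k}$ (disks for the six braid strands joined by twisted bands at the crossings, as in the proof of Lemma \ref{hyp}) that the slope of $\partial F_{l,m,k}$ on the figure-eight component is an integer whose absolute value is at least $5$ precisely when $l+m\geq 5$. Intuitively, the $2l$- and $2m$-boxes contribute linearly to the framing that the fiber induces on the figure-eight component, while the $k$-boxes contribute zero net framing because each such block pairs two positive with two negative crossings. Once this slope identification is carried out, the remaining ingredients -- q-hyperbolicity from Theorem \ref{thm:q-hyperbolic}, the Stallings-twist/pseudo-Anosov machinery of Theorem \ref{hyperbolic}, and the AMU criterion of Theorem \ref{tool} -- slot together routinely to give the pseudo-Anosov abelian elementary coset of rank $k$ in $\mathrm{Mod}(\Sigma_{2+m+l+2k,1})$ claimed by the theorem.
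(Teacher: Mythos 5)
Your overall strategy matches the paper's: Dehn-fill the figure-eight component of $L_{l,m,k}$ along the fiber framing to get a fibered knot in $4_1(p)$ with $p=-m-l$, note the Stallings curves survive, and conclude via Theorem~\ref{thm:q-hyperbolic} and Theorem~\ref{tool}. Your intuition about the framing computation is also correct (self-crossings cancel, the $m$- and $l$-boxes contribute $-m$ and $-l$, the $k$-boxes contribute zero).

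However, there is a genuine gap in your hyperbolicity step. You invoke Thurston's hyperbolic Dehn filling to get hyperbolicity of $4_1(p)\setminus L'_{l,m,k}$ ``for $l+m$ sufficiently large.'' But the theorem asserts the conclusion for every $l,m,k\geqslant 1$ with $l+m\geqslant 5$, and Thurston's theorem gives no uniform threshold: the set of exceptional slopes depends on the cusped manifold $S^3\setminus L_{l,m,k}$, which varies with $l,m,k$. You therefore cannot rule out a priori that some specific value of $(l,m,k)$ with $l+m\geqslant 5$ has $-m-l$ among the exceptional slopes. The paper closes this by a different, effective argument: the diagram of $L_{l,m,k}$ is prime, alternating, twist-reduced, and the figure-eight component visits at least $7$ twist regions, so by the Futer--Purcell criterion every non-trivial slope on that cusp has length greater than $6$, and the $6$-Theorem then guarantees that \emph{every} non-trivial filling along the figure-eight component is hyperbolic. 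This gives hyperbolicity for all $l,m,k\geqslant 1$, and the constraint $l+m\geqslant 5$ is needed only for q-hyperbolicity of $4_1(-m-l)$, not for hyperbolicity.

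A secondary, minor point: your application of Theorem~\ref{hyperbolic} to force infinitely many pseudo-Anosov elements relies on verifying $i(h(c_i),c_j)\neq 0$, and you suggest this ``can be forced by the curve-graph distance argument'' from Theorem~\ref{prop:stab_abelian}. That argument depended crucially on being free to \emph{choose} a Penner system of curves moved far by the monodromy (Lemma~\ref{Pennerlarge}); here the Stallings curves and monodromy are fixed by the construction of $L_{l,m,k}$, so the distance control is not available. Fortunately, Theorem~\ref{thm:fig8examplegeneral} as stated only requires the coset to contain \emph{some} pseudo-Anosov element (which the monodromy of $L'_{l,m,k}$ provides once its hyperbolicity is established), so this extra step is both unjustified and unnecessary; the paper relies on Corollary~\ref{infinitelymany} afterward to upgrade to infinitely many independent pseudo-Anosov elements.
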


\begin{proof}  We begin by observing that the $k$ Stallings curves and disks of Lemma \ref{supportStallings}  are
disjoint from the figure-eight component of $L_{l,m,k}.$ Thus these disks will survive in any 3-manifold obtained by Dehn filling along the figure-eight component.
For $p \in \Z,$ let $4_1(p)$ be the manifold obtained by $p$ surgery on the figure-eight knot. We now want to perform a surgery on the figure-eight component of $L_{l,m,k},$ so that the resulting link will be a hyperbolic fibered knot $L'_{l,m,k}$ in a manifold $4_1(p).$

\begin{figure}
\centering
\def \svgwidth{.4\columnwidth}
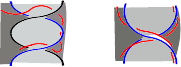
\caption{Crossing contributions  to the framing determined by $\gamma$.}
\label{fig:framing}
\end{figure}

Let $\gamma$ be the curve on the fiber surface $F_{l,m,k}$ obtained by pushing the figure-eight component inward along the fiber. If the slope $p$ chosen for the surgery agrees with the slope determined by $\gamma,$ then the link $L'_{l,m,k}$ in $4_1(p)$ will still be fibered. Moreover the fiber surface and the monodromy will now be obtained by simply capping off the relevant boundary component of the fiber surface of $L_{l,m,k},$ and the non-parallel Stallings curves will remain non-parallel Stallings curves on the new fiber.

To compute this framing we refer to Figure \ref{fig:framing} where the strands of the figure-eight component and the other component of  $L_{l,m,k}$ are colored by blue and black respectively and the curve $\gamma$ is shown in red. As illustrated  in the right side panel of the figure,  a  positive crossing (resp. negative crossing) between two strands of the figure-eight component contributes a $-1$ (resp. $+1$) to this framing. Furthermore,  as illustrated in the left side panel of the figure, two successive positive crossings between the figure-eight and the second component $L_{l,m,k}$ contribute a $-1.$
Hence 
the boxes of $2m$ and $2l$ successive positive crossings between the two-components of $L_{l,m,k}$ contribute $-m$ and $-l.$  In total,  since the self linking number of the blue component is zero, the framing determined by $\gamma$ is $-m-l.$

When $m+l\geqslant 5,$ the new link $L'_{l,m,k}$ is a link in a manifold $4_1(p)$ with $|p|\geqslant 5.$
 Thus we know that $lTV(4_1(p)\setminus L'_{l,m,k})\geqslant lTV(4_1(p)) >0$ by parts
(2) and (5)  of Theorem \ref{thm:q-hyperbolic}. That is the complement of the  link $L'_{l,m,k}$ in  $4_1(p)$ is q-hyperbolic.
The same is true for all the 3-manifolds obtained from the complements of $L'_{l,m,k}$ by
(i) drilling out  any number of the $k$ Stallings curves described above; or
(ii) performing any number of the Stallings twists along these $k$ curves. For all these manifolds
are also link complements in the same q-hyperbolic 3-manifold $4_1(p).$ 

These later links will be fibered in $4_1(p),$  and thus  by Theorem \ref{tool},
their  monodromies will satisfy the AMU conjecture.  However, at this stage, we do not know whether the monodromy is pseudo-Anosov; they might just have some pseudo-Anosov parts.
Next we will argue that this later case will not happen.
\vskip 0.07in

{\it Claim.} The link $L'_{l,m,k}$ is  hyperbolic and thus its monodromy is pseudo-Anosov.
\vskip 0.07in

 {\it Proof of Claim.} Recall that a twist region in a link diagram consists of a maximal string of bigons arranged end-to-end, so that the crossings alternate, and so that there are no other bigons adjacent to the ends; the twist number of
a diagram $D$ is the number of such twist regions, and is denoted by $t(D)$. For instance,
the twist number of the diagram of  $L_{l,m,k}$ in Figure \ref{fig:fig8example} is $10+2k$.
The reader is referred to \cite[Definition 2.4]{fkp:filling} for more details. 
A  criterion of Futer-Purcell \cite[Theorem 3.10]{futer-purcell}, states that, if a component $K$ of a hyperbolic link $L$ visits at least $7$ twist regions in a prime twist-reduced diagram of $L,$ then 
any non-trivial slope on the cusp of the link complement corresponding to the component $K$, has length bigger that 6. Then the ``6-Theorem" of Agol-Lackenby \cite[Theorem 3.13]{fkp:survey}
implies that all non-trivial fillings along the complement $K$ give hyperbolic manifolds. As the figure-eight component of $L_{l,m,k}$ does visit $7$ twist regions, we
conclude that  $L'_{l,m,k}$ is hyperbolic, finishing thereby the proof of the claim.  

\vskip 0.07in

Continuing with the proof of the theorem,  we can write the complement  of $L'_{l,m,k}$  as the mapping torus of a pseudo-Anosov $f\in \mathrm{Mod}(\Sigma_{2+m+l+2k,1})$.
Considering all the other monodromies we get after performing Stallings twists along the $k$ curves of Lemma \ref{supportStallings} we will get a pseudo-Anosov abelian elementary coset of rank $k$ of maps satisfying the AMU conjecture. More specifically, given $m,l \geqslant 1$,
let $H_k$ denote the free abelian group of $\mathrm{Mod}(\Sigma_{2+m+l+2k,1})$
generated by the $k$ Stallings twists on the $k$ curves. Now $fH_k$ is an elementary abelian  pseudo-Anosov  coset  such that all the mapping classes in it are realized as monodromies of  fibered links in $4_1(-l-m)$. This concludes the proof of  the theorem. \end{proof} 
\smallskip

Applying Theorem \ref{thm:fig8examplegeneral} for $m=4,l=1$ we get  
 the following  corollary which, by  Corollary \ref{infinitelymany}, gives Theorem \ref{thm:fig8example}  stated in the introduction.

\begin{corollary}\label{special} For any
$k\geqslant 1,$ there is a rank $k$ pseudo-Anosov abelian  elementary  coset of  $\mathrm{Mod}(\Sigma_{7+2k,1}),$  consisting of  mapping classes that satisfy the AMU conjecture.
\end{corollary}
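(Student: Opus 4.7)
The plan is to obtain this corollary as a direct specialization of Theorem \ref{thm:fig8examplegeneral}. First I would set $m=4$ and $l=1$ in the statement of that theorem and check that the hypotheses are met: we have $m, l \geqslant 1$, and the crucial inequality $l+m \geqslant 5$ is satisfied as equality, since $4+1=5$. This is precisely the threshold needed to ensure that the Dehn surgery slope on the figure-eight component of the link $L_{l,m,k}$ of Section \ref{sec:family_links} has absolute value at least $5$, so that the resulting ambient manifold is one of the q-hyperbolic $3$-manifolds $4_1(p)$ appearing in Theorem \ref{thm:q-hyperbolic}(2).

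Next I would compute the genus of the fiber promised by Theorem \ref{thm:fig8examplegeneral}, which is $2 + m + l + 2k = 2+4+1+2k = 7+2k$. Substituting these values into the conclusion of Theorem \ref{thm:fig8examplegeneral}, we obtain a rank $k$ pseudo-Anosov abelian elementary coset in $\mathrm{Mod}(\Sigma_{7+2k,1})$ whose elements all satisfy the AMU conjecture. This is exactly the statement of the corollary.

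Since the argument is a straightforward substitution, the honest ``main step'' already happened in the proof of Theorem \ref{thm:fig8examplegeneral}, where one had to (i) construct the fibered link $L_{l,m,k}$ and its $k$ mutually non-parallel Stallings curves on the fiber, (ii) identify the framing $-m-l$ so that surgery on the figure-eight component preserves the fibration and lands in the q-hyperbolic manifold $4_1(-m-l)$, and (iii) invoke the Futer--Purcell $7$-twist-region criterion together with the Agol--Lackenby $6$-theorem to conclude hyperbolicity (hence pseudo-Anosov monodromy) of the resulting fibered knot. There is no additional obstacle here; one only needs to verify the numerical conditions. Finally, to get the supplementary ``infinitely many pseudo-Anosov'' assertion of Theorem \ref{thm:fig8example} in the introduction, one simply feeds the coset produced above into Corollary \ref{infinitelymany}, which guarantees that any pseudo-Anosov abelian elementary coset contains infinitely many pairwise independent pseudo-Anosov elements.
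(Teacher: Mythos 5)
Your proposal matches the paper's proof exactly: both obtain Corollary \ref{special} by specializing Theorem \ref{thm:fig8examplegeneral} to $m=4$, $l=1$, verifying $l+m\geqslant 5$ and computing the genus $2+m+l+2k = 7+2k$. The substitution is correct, and your remark about invoking Corollary \ref{infinitelymany} for the ``infinitely many pseudo-Anosov'' claim in Theorem \ref{thm:fig8example} is also what the paper does.
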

 
 Let us point out that there is a lot of freedom in the construction described above and as a result we actually get many different pseudo-Anosov abelian elementary cosets. 
 For example we can always change the values of $m$ and $l$ while keeping $m+l\geqslant 5$ to get different families of links and monodromies.  In particular we have the following:
 
 \begin{corollary} \label{bounded}Fix $k\geqslant 1$. Then for every $g\geqslant  7+2k$, the  construction above gives   a pseudo-Anosov abelian elementary coset
 $fH_k$   in  $\mathrm{Mod}(\Sigma_{g,1}),$  such that
 \begin{enumerate}
 \item all mapping classes in $fH_k$ satisfy the AMU Conjecture; and
 \item for every pseudo-Anosov element in $fH_k$ the volume of the corresponding mapping torus is bounded above by a constant  that is independent of the genus.
 \end{enumerate}
  \end{corollary}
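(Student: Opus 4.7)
Part (1) is immediate from Theorem \ref{thm:fig8examplegeneral}: given $k\geqslant 1$ and $g\geqslant 7+2k$, I would choose any pair $m,l\geqslant 1$ with $m+l=g-2-2k\geqslant 5$, and apply the theorem to obtain a pseudo-Anosov abelian elementary coset $fH_k\subset \mathrm{Mod}(\Sigma_{g,1})$, all of whose elements satisfy AMU. The substantive content is part (2), which requires a volume bound on the corresponding mapping tori that is uniform in $g$.

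For part (2), recall from the proof of Theorem \ref{thm:fig8examplegeneral} that $fH_k$ consists of mapping classes of the form $f\circ \tau_{c_1}^{n_1}\circ \cdots \circ \tau_{c_k}^{n_k}$, where $c_1,\ldots,c_k$ are the $k$ disjoint Stallings curves of Lemma \ref{supportStallings} on the fiber $F_{l,m,k}$, and $f$ is the monodromy of the fibered link $L'_{l,m,k}\subset 4_1(-m-l)$. For any pseudo-Anosov element in $fH_k$, the mapping torus is obtained by Dehn filling (with slopes determined by the $n_i$'s) on the cusps of the drilled manifold
\[
N_{l,m,k}\ :=\ \bigl(4_1(-m-l)\bigr)\setminus\bigl(L'_{l,m,k}\cup c_1\cup\cdots\cup c_k\bigr).
\]
By Thurston's Dehn filling theorem (in the hyperbolic case the volume strictly decreases under nontrivial filling), the volume of any such hyperbolic mapping torus is bounded above by $\vol(N_{l,m,k})$.

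Next I would bound $\vol(N_{l,m,k})$ independently of $m,l$. First, $N_{l,m,k}$ is obtained from
\[
N^0_{l,m,k}\ :=\ S^3\setminus \bigl(L_{l,m,k}\cup c_1\cup\cdots\cup c_k\bigr)
\]
by a single Dehn filling on the figure-eight cusp with slope $-m-l$, so $\vol(N_{l,m,k})\leqslant \vol(N^0_{l,m,k})$. To remove the dependence on $m$ and $l$, augment the two twist regions of $L_{l,m,k}$ containing $2m$ and $2l$ crossings by adding a crossing circle around each, producing a link $\widetilde L_k\subset S^3$. Then $L_{l,m,k}\cup c_1\cup\cdots\cup c_k$ is recovered from $\widetilde L_k\cup c_1\cup\cdots\cup c_k$ by integer Dehn filling on the two crossing circles; equivalently, up to homeomorphism the complement
\[
\widetilde N_k\ :=\ S^3\setminus\bigl(\widetilde L_k\cup c_1\cup\cdots\cup c_k\bigr)
\]
does not depend on $m,l$. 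Since the Stallings curves $c_i$ are disjoint from the twist regions we augment, this augmentation is legitimate. By Thurston's theorem again, $\vol(N^0_{l,m,k})\leqslant \vol(\widetilde N_k)$, and I would set $C(k):=\vol(\widetilde N_k)$, which depends only on $k$. Combining the two filling inequalities yields the desired bound.

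The main obstacle I anticipate is the verification that the various drilled and augmented manifolds are in fact hyperbolic, so that Thurston's monotonicity under filling applies. For $\widetilde N_k$, this can be handled by checking that, after unwinding the augmented twist regions, the resulting diagram is prime, twist-reduced, and alternating (or fits into the augmented-link framework of Futer–Kalfagianni–Purcell), so Menasco's theorem or the augmented-link hyperbolicity results apply. An alternative, cleaner route if technical difficulties arise is to work with the Gromov norm instead of hyperbolic volume: Gromov norm is monotone under Dehn filling and inclusion without any hyperbolicity hypothesis, and for hyperbolic manifolds it is proportional to volume by Gromov–Thurston, which suffices for the stated conclusion.
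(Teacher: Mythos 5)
Your proof is correct and follows essentially the paper's argument: augment the variable twist regions of $L_{l,m,k}$ with crossing circles to obtain a fixed link complement depending only on $k$, then bound the volume of any mapping torus in the coset via Dehn-filling monotonicity. The paper fixes $l=1$ and works directly with the Gromov norm, so no hyperbolicity of the intermediate (drilled or augmented) manifolds need be verified, and the constant is $C(k)=B\,||S^3\setminus J_k||$ with $B$ the Gromov--Thurston proportionality constant --- this is exactly the ``cleaner route'' you describe as a fallback at the end, and you should promote it to the primary argument rather than first passing through hyperbolic volume.
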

 \begin{proof} Fix $k\geqslant 1$. Consider the knots  $L'_{1,m,k}$ constructed in the proof of Theorem \ref{thm:fig8examplegeneral}, for $l=1$.
 As discussed above for $l=1$ and $m\geqslant 4$, the  knots $L'_{1,m,k}$  are hyperbolic and fibered in $4_1(-1-m)$, with fiber a surface of genus $g(m)=3+m+2k$ . 
 Note that  $g(m)\to \infty$ as $m\to \infty$.
 Given $m$, we obtain a coset $fH_k$ that satisfies part (1) of the statement of the corollary.  It remains to show that the volumes of the corresponding mapping tori are bounded as claimed.
 
 By construction, 
  the fibered manifolds with monodromies in $fH_k$
 are obtained by Dehn filling from $S^3$ along the link
  $J_{k}:=I_k \cup K$, where $K$ is the union of the disjoint Stallings curves on the fiber of the link $L_{1,m,k}$ and $I_k$  the link obtained by augmenting the twist region with the $m$ crossings with a crossing circle and then removing all the $2m$ crossings. Let $||S^3 \setminus J_k||$ denote the Gromov norm of the complement of $J_k$.  
The volume of any fibered manifold with monodromy in $fH_k$ will be bounded above by $B ||S^3 \setminus J_k||$  where $B$ is a universal constant.
By construction,
increasing $m$ leaves $B ||S^3 \setminus J_k||$ unchanged, as all the crossings in the $m$-box of Figure   \ref{fig:fig8example} lie in a single twist region, while 
as noted earlier it changes the genus $g(m):=3+m+2k$ of the link $L'_{1,m,k}$.
 \end{proof}

 We are now ready to prove the following, assuming Theorem \ref{thm:coset}, which we will prove in the next section.
 
 \begin{corollary} \label{independent} For any $k>0$ and $g\gg0$ we have two rank $k$ pseudo-Anosov abelian elementary cosets
  in $\mathrm{Mod}(\Sigma_{g,1})$ that satisfy the AMU conjecture and
 such that no conjugates of elements in one coset lies in the other.
 \end{corollary}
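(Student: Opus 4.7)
My plan is to produce the two cosets by two different constructions already available in the paper---Corollary \ref{bounded} and Theorem \ref{prop:stab_abelian} applied to a carefully chosen $q$-hyperbolic manifold of large Gromov norm---and then distinguish them using the hyperbolic volume of the corresponding mapping tori, which is a conjugacy invariant.

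First, fix $k \geqslant 1$ and apply Corollary \ref{bounded}: for every $g \geqslant 7+2k$, this gives a rank $k$ pseudo-Anosov abelian elementary coset $\mathcal{C}_1 \subset \mathrm{Mod}(\Sigma_{g,1})$ with all elements satisfying AMU, together with an upper bound $V = V(k)$, independent of $g$, on $\mathrm{Vol}(M_f)$ for every pseudo-Anosov $f \in \mathcal{C}_1$.

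Next, I would use Corollary \ref{large} to pick a closed $q$-hyperbolic 3-manifold $M$ with $v_3 \|M\| > V$, where $v_3$ is the volume of the regular ideal hyperbolic tetrahedron. Applying Theorem \ref{prop:stab_abelian} to $M$ produces, for every $g \geqslant g_0(M) + k$, a rank $k$ pseudo-Anosov abelian elementary coset $\mathcal{C}_2 \subset \mathrm{Mod}(\Sigma_{g,1})$ consisting entirely of pseudo-Anosov classes realized as monodromies of open book decompositions of $M$. Because $M$ is $q$-hyperbolic, Corollary \ref{apAMU} shows every element of $\mathcal{C}_2$ satisfies AMU as well. Taking $g \geqslant \max(7+2k,\, g_0(M)+k)$ places both cosets inside the same mapping class group.

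The crux is to verify that no element of $\mathcal{C}_1$ is conjugate to any element of $\mathcal{C}_2$. Since every element of $\mathcal{C}_2$ is pseudo-Anosov and conjugation preserves Nielsen--Thurston type, it suffices to show that no pseudo-Anosov $f \in \mathcal{C}_1$ is conjugate to any $g \in \mathcal{C}_2$. For such a $g$, the mapping torus $M_g$ is the complement of the binding link in $M$, so the monotonicity of the Gromov norm under Dehn filling gives $\|M_g\| \geqslant \|M\|$, and since $M_g$ is hyperbolic one has $\mathrm{Vol}(M_g) = v_3 \|M_g\| \geqslant v_3 \|M\| > V \geqslant \mathrm{Vol}(M_f)$. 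But if $f$ and $g$ were conjugate, $M_f$ and $M_g$ would be homeomorphic, forcing $\mathrm{Vol}(M_f) = \mathrm{Vol}(M_g)$, a contradiction.

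The only delicate point is ensuring that $V(k)$ from Corollary \ref{bounded} is genuinely independent of $g$, so that a single $M$ of sufficiently large Gromov norm works for all large $g$ simultaneously; this is precisely what part (2) of Corollary \ref{bounded} provides, and the rest is bookkeeping.
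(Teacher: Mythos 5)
Your proof is correct and follows essentially the same strategy as the paper's: take one coset from Corollary \ref{bounded} with volume bounded above by a constant independent of $g$, take the other from Theorem \ref{prop:stab_abelian} (equivalently Theorem \ref{thm:coset}) applied to a closed $q$-hyperbolic manifold of sufficiently large Gromov norm, and separate them because conjugate mapping classes have homeomorphic mapping tori and the Gromov norms of the two families are forced to lie on opposite sides of $\|M\|$.
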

 
 \begin{proof} Given  $k>0$ and  $g\geqslant  7+2k$ take a coset  $fA$ given by Corollary \ref{bounded}. With the notation as in the proof of that corollary, let $M$ be a closed $q$-hyperbolic
 3-manifold with $||M||\gg B ||S^3 \setminus J_k||$ (compare with Corollary \ref{large}),
 and apply Theorem \ref{thm:coset} to $M$.
  Now for any $g\gg{\rm max}\{ g_1(M), \  7+2k\}$ take the second coset required by the statement of the corollary
 to be that given by Theorem \ref{thm:coset}. By Theorem \ref{tool} the mapping classes in this later coset, denoted by $f'H'$, satisfy the AMU  conjecture.
 Recall  that conjugate mapping classes  define homeomorphic mapping tori.
Since the Gromov norm of  mapping tori for elements in $f'H'$ are bounded below by $||M||$ 
 while mapping tori for elements in $fH$ are bounded above by $||M||$, the result follows.
 \end{proof}
 
 Finally to  obtain Corollary \ref{boundedintro}  stated in the Introduction consider the coset constructed in Corollary \ref{bounded} and take $C=C(k)= B ||S^3 \setminus J_k||,$  where this later quantity is defined in the proof of 
 Corollary \ref{bounded}.

\bibliographystyle{hamsplain}
\bibliography{biblio}
\end{document}